\newcommand{\mathbbm}[1]{\text{\usefont{U}{bbm}{m}{n}#1}} % From mathbbm.sty
\newtheorem{theorem}{Theorem}
\newtheorem{proposition}{Proposition}
\providecommand{\keywords}[1]
{
  \small	
  \textbf{Keywords:} #1
}
\newcommand{\abs}[1]{\left|#1\right|}
\newtheorem{hypothesis}{Hypothesis}
\newtheorem{lemma}[subsubsection]{Lemma}
\newtheorem{definition}[subsubsection]{Definition}
\newtheorem{remark}[subsubsection]{Remark}
\begin{document}

\title{The Averaging Principle for Non-autonomous Slow-fast Stochastic Differential Equations and an Application to a Local Stochastic Volatility Model}
\author{Filippo de Feo \\ filippo.defeo@polimi.it}

\affil{Department of Mathematics, Politecnico di Milano, \\ Piazza Leonardo da Vinci 32, 20133 Milano,
Italy }

\maketitle
\begin{abstract}
    In this work we study the averaging principle for non-autonomous slow-fast systems of stochastic differential equations. In particular in the first part we prove the averaging principle assuming the sublinearity, the Lipschitzianity and the Holder's continuity in time of the coefficients, an ergodic hypothesis and an $\mathcal{L}^2$-bound of the fast component. In this setting we prove the weak convergence of the slow component to the solution of the averaged equation. Moreover we provide a suitable dissipativity condition under which the ergodic hypothesis and the $\mathcal{L}^2$-bound of the fast component, which are implicit conditions, are satisfied.
    
 In the second part we propose a financial application of this result: we apply the theory developed to a slow-fast local stochastic volatility model. First we prove the weak convergence of the model to a local volatility one. Then under a risk neutral measure we show that the prices of the derivatives, possibly path-dependent, converge to the ones calculated using the limit model.
\end{abstract}

\keywords{averaging principle, slow fast stochastic differential equations, non autonomous, Khasminskii, local stochastic volatility}

\section{Introduction}
In this work we are concerned with the study of the averaging principle for slow-fast systems of stochastic differential equations of the form
\begin{equation}\label{eq:synopsis}
\begin{cases}
dX_t^{\epsilon}=b(t,X_t^{\epsilon},Y_t^{\epsilon})dt+\sigma(t,X_t^{\epsilon},Y_t^{\epsilon})dW_t \\
X_0^{\epsilon}=x_0  \in \mathbb{R}^d \\
dY_t^{\epsilon}=\epsilon^{-1} B(t,X_t^{\epsilon},Y_t^{\epsilon})dt+\epsilon^{-1/2}C(t,X_t^{\epsilon},Y_t^{\epsilon})d\tilde{W}_t \\
Y_0^{\epsilon}=y_0 \in \mathbb{R}^l
\end{cases}
\end{equation}
where t $\in [0,T]$, W, $\tilde{W}$ are Brownian motions, $b$, $\sigma$, $B$, $C$ are some regular functions and $\epsilon \in (0,1]$ is a small parameter representing the ratio of time-scales between the slow component $X^{\epsilon}$ and the fast component $Y^{\epsilon}$.\\
The idea of the averaging principle is to show the convergence of $X^{\epsilon}$ as $\epsilon \to 0$ to the solution $\overline{X}$ of the so called averaged equation
 \begin{align*} 
d\overline{X} _t = \overline{b}(t,\overline{X}_t)dt+\overline{\sigma}(t,\overline{X}_t)dW_t && \overline{X}_0=x_0, \forall t \in[0,T]
\end{align*}
which doesn't depend anymore on the fast component and the coefficients $\overline{b}$, $\overline{\sigma}$ are opportune averages of the original ones b, $\sigma$.\\
In this sense from a mathematical point of view it is possible to model systems with different time-scales and then operate a rigorous dimensionality reduction, approximating the behavior of the slow component $X^{\epsilon}$ with $\overline{X}$ and controlling the error of such approximation.
For this reason multi-scale stochastic systems are widely used in many areas of physics, chemistry, biology, financial mathematics and many other applications areas (e.g. \cite{papanicolaou2}, \cite{papanicolaou3}, \cite{rawlings}, \cite{WEINAN}).

The averaging principle was initially proposed for the classical equations of celestial mechanics by Clairaut, Laplace and Lagrange in the eighteenth century. Later on in the twentieth century rigorous result were proved by Bogoliubov, Krylov and Mitropolsky. In \cite[Appendices]{Sanders} it can be found an entire section devoted to the history of the averaging principle for differential equations.\\
For what concerns stochastic differential equations the first results on the averaging principle were due to Khasminskii, Vrkoč and Gikhman in the '60s.\\
In \cite{Khasminskii} Khasminskii considered the following autonomous system of stochastic differential equations 
 \begin{equation*}
\begin{cases}
dX_t^{\epsilon}=b(X_t^{\epsilon},Y_t^{\epsilon})dt+\sigma(X_t^{\epsilon},Y_t^{\epsilon})dW_t \\
X_0^{\epsilon}=x_0  \in \mathbb{R}^d \\
dY_t^{\epsilon}=\epsilon^{-1} B(X_t^{\epsilon},Y_t^{\epsilon})dt+\epsilon^{-1/2}C(X_t^{\epsilon},Y_t^{\epsilon})d\tilde{W}_t \\
Y_0^{\epsilon}=y_0 \in \mathbb{R}^l
\end{cases}
\end{equation*}
and proved that the averaging principle holds under Lipschitz coefficients and $b,\sigma$ uniformly bounded in $y$. Moreover  a bound of the fast component and the following ergodic condition are assumed:\\
there exist a d-dimensional vector $\overline{b}(x)$ and a square d-dimensional matrix $\overline{a}(x)$ such that for every $t_0 \leq T$, $\tau \geq 0$, $x \in \mathbb{R}^d$, $y \in \mathbb{R}^l$

\begin{equation*}
\left|\mathbf{E} \left [ \tau^{-1} \int_{t_0}^{t_0+\tau} b\left(x, y_{s}^{(x, y)}\right) d s-\overline{b}(x) \right] \right| \leq \overline{K}(\tau)\left(1+|x|^{2}+|y|^{2}\right)
\end{equation*}
 
\begin{equation*}
\left|\mathbf{E} \left [ \tau^{-1} \int_{t_0}^{t_0+\tau} a\left(x, y_{s}^{(x, y)}\right) d s-\overline{a}(x)\right] \right| \leq \overline{K}(\tau)\left(1+|x|^{2}+|y|^{2}\right)
\end{equation*}
 where $\lim_{\tau \to \infty} \overline{K}(\tau) = 0$ and $\{ y_s^{(x,y)} \}_{s \geq 0}$ satisfies the so called frozen equation
 \begin{align*}
 d y_{s}^{(x, y)}=B\left(x, y_{s}^{(x, y)}\right) d s+C\left(x, y_{s}^{(x, y)}\right) d \tilde{W}_{s} && y_{0}^{(x,y)}=y,s\geq 0
\end{align*}
In particular Khasminskii showed that the process $X^\epsilon$ converges weakly in $\mathcal{C}[0,T]^d$ to the solution $\overline{X}$ of the averaged equation
 \begin{align*} 
d\overline{X} _t = \overline{b}(\overline{X}_t)dt+\overline{\sigma}(\overline{X}_t)dW_t && \overline{X}_0=x_0, \forall t \in[0,T]
\end{align*}
The coefficients $\overline{b}$, $\overline{\sigma}$ satisfying the ergodic condition are usually averages of the initial ones $b$, $\sigma$ with respect to the invariant measure of the frozen equation which must be shown to exist. Typically a dissipativity condition implies the implicit hypotheses.
The proof of Khasminskii based on a time-discretization technique was very influential and it inspired many researches conducted by mathematicians like Freidlin, Wentzell, Veretennikov, Cerrai, Röckner and many others. We need to remark that when $\sigma$ is independent of the fast variable $Y^{\epsilon}$ we usually have the stronger convergence in probability of the slow variable $X^{\epsilon}$ (e.g. \cite{Cerrai}, \cite{friedlin}, \cite{veretnikov}). 

Recently non-autonomous slow-fast systems are being investigated:
In 2016 in \cite{Cerrai_lunardi} Cerrai and Lunardi studied a non-autonomous slow-fast system of reaction diffusion equations where the coefficients of the fast equation depend on time and satisfy the almost periodic in time condition. They proved the convergence in probability of $X^{\epsilon}$ when $\sigma$ is independent of $Y^{\epsilon}$.\\
More recently in 2020 in \cite{Wei_liu} the authors considered the non-autonomous system \eqref{eq:synopsis} with $\sigma$ independent of $Y^{\epsilon}$ and proved the $\mathcal{L}^p$-convergence of $X^{\epsilon}$ under local Lipschitz coefficients. Here the coefficients of the slow equation are assumed to depend also on $\omega$.\\
In \cite{wang} jumps were considered for slow-fast system of reaction diffusion equations. Also here the diffusion term of the fast equation is independent on the fast component.

In this work, motivated by the study of a local stochastic volatility model, we consider the non-autonomous slow-fast system \eqref{eq:synopsis}, where $\sigma$ depends on $Y^{\epsilon}$, and we prove the validity of the averaging principle. We assume the Lipschitzianity, an Holder's continuity in time and a sublinearity of the coefficients. Moreover in spirit of Khasminskii's work we assume an ergodic hypothesis and an $\mathcal{L}^2$-bound of the fast component.
In this setting we show the weak convergence of $X^{\epsilon}$ in $\mathcal{C}[0,T]^d$ to the solution of the averaged equation by means of a time discretization argument inspired by Khasminskii. The ergodic hypothesis and the $\mathcal{L}^2$-bound of the fast component assumed here are implicit hypotheses, so they need to be proved valid for a given slow-fast system. For this reason we also provide an explicit condition under which the implicit hypotheses are verified: this is the following dissipativity of the coefficients of the fast component
\begin{equation*}
<B(t,x,y_2)-B(t,x,y_1),y_2-y_1>+|C(t,x,y_2)-C(t,x,y_1)|^2 \leq -\beta |y_2-y_1|^2
\end{equation*}
We also treat the case of a perturbation of the drift of the fast equation by a slower term of the form $\epsilon^{-\eta}D(t,X_t^{\epsilon},Y_t^{\epsilon})$ for $0\leq \eta<1$ by considering
\begin{equation*} 
\begin{cases}
dX_t^{\epsilon}=b(t,X_t^{\epsilon},Y_t^{\epsilon})dt+\sigma(t,X_t^{\epsilon},Y_t^{\epsilon})dW_t \\
X_0^{\epsilon}=x_0 \\
dY_t^{\epsilon}=[\epsilon^{-1} B(t,X_t^{\epsilon},Y_t^{\epsilon})+\epsilon^{-\eta}D(t,X_t^{\epsilon},Y_t^{\epsilon})]dt+\epsilon^{-1/2}C(t,X_t^{\epsilon},Y_t^{\epsilon})d\tilde{W}_t \\
Y_0^{\epsilon}=y_0
\end{cases}
\end{equation*}
In fact this case will be necessary in the next part of the article.

At this point we propose a financial application of this result. In \cite{papanicolaou2} under the objective measure the authors consider a slow-fast stochastic volatility model of the form:
\begin{equation*}
\begin{cases}
dS_t^{\epsilon}=\eta S_t^\epsilon dt+ \mathcal{F}(Y_t^\epsilon)  S_t^\epsilon dW_t \\
S_0^{\epsilon}=s_0 \\
dY_t^{\epsilon}=\epsilon^{-1} \mathcal{B}(Y_t^{\epsilon})dt+\epsilon^{-1/2} \mathcal{C}(Y_t^{\epsilon})  d\tilde{W}_t \\
Y_0^{\epsilon}=y_0 
\end{cases}
\end{equation*}
where $S^\epsilon$ is the stock price, $\mathcal{F}(y)$ is the stochastic volatility driven by an ergodic, mean-reverting process $Y^\epsilon$.\\
Then, after the change of measure to a risk neutral one, they perform a perturbational analysis of the prices of the financial derivatives. This analysis is performed through an asymptotic expansion as $\epsilon \to 0$ of the pricing partial differential equation. An exhaustive presentation would be impossible here, we refer to \cite{papanicolaou2} for all the details. The literature on these models and their generalizations is very rich, e.g. see \cite{choi_hybrid}, \cite{papanicolaou2}, \cite{papanicolaou}, \cite{papanicolaou3}, \cite{lorig_lsv} and references therein.

Inspired by these works under the objective measure we consider a slow-fast local stochastic volatility model of the form
\begin{equation*}
\begin{cases}
dS_t^{\epsilon}=\mathcal{H}(t,S_t^{\epsilon},Y_t^\epsilon)  S_t^\epsilon dt+ \mathcal{F}(t,S_t^{\epsilon},Y_t^\epsilon)  S_t^\epsilon dW_t \\
S_0^{\epsilon}=s_0 \\
dY_t^{\epsilon}=\epsilon^{-1} \mathcal{B}(t,S_t^{\epsilon},Y_t^\epsilon) dt+\epsilon^{-1/2} \mathcal{C}(t,S_t^{\epsilon},Y_t^\epsilon)   d\tilde{W}_t \\
Y_0^{\epsilon}=y_0 
\end{cases}
\end{equation*}
where $S^\epsilon$ is the stock price, $\mathcal{H}(t,s,y)$, $\mathcal{F}(t,s,y)$ are the local stochastic drift and local stochastic volatility respectively and they are driven by the fast process $Y^\epsilon$. For local stochastic volatility models refer to \cite{lorig2} and \cite{lorig_lsv}. See also \cite{bergomi} for a general overview. Applying the theory just developed, we show the weak convergence of $S^\epsilon$ in $\mathcal{C}[0,T]$ to the solution $\overline{S}$ of the averaged equation 
\begin{align*}
d\overline{S} _t =  \overline{\mathcal{H}}(t,\overline{S} _t) \overline{S}_t  dt+\overline{\mathcal{F}}(t,\overline{S} _t) \overline{S}_t dW_t && \overline{S}_0=s_0, \forall t \in[0,T]
\end{align*}
which turns out to be a local volatility model.

At this point we proceed with the construction of a family of risk neutral measures, under which pricing can be done, and we consider a financial derivative possibly path-dependent. Then, under a risk neutral measure, we show the weak convergence of $S^\epsilon$ in $\mathcal{C}[0,T]$ to $\overline{S}$. Moreover we prove that the price of the derivative, calculated using the slow-fast model $(S^\epsilon, Y^\epsilon)$, converge to the price of the derivative calculated using the limit model $\overline{S}$. 

The work is organized as follows:\\
in section \ref{sec:The Averaging Principle for a Class of non Autonomous Stochastic Differential Equations} we introduce a class of non-autonomous slow-fast stochastic differential equations and we prove that the averaging principle holds for such class.\\
In section \ref{sec:Explicit conditions} we prove that the implicit hypotheses assumed in section \ref{sec:The Averaging Principle for a Class of non Autonomous Stochastic Differential Equations} are verified under an explicit dissipativity condition.\\
In section \ref{sec:Financial Application} we apply the result shown in the previous chapters to a financial local stochastic volatility model.

\section{The Averaging Principle for a Class of non Autonomous Stochastic Differential Equations}
\label{sec:The Averaging Principle for a Class of non Autonomous Stochastic Differential Equations}
\subsection{Framework and main Theorem}
Let $(\Omega, \mathcal{F}, \mathcal{F}_t,\left(W_t,Z_t \right) , \mathbb{P})_{t \geq 0}$ be a continuous standard $\mathbb{R}^{d+l}$-Brownian motion.\\
As in \cite[section 6]{bardi} we construct an $\mathbb{R}^{l}$-Brownian motion $\{ \tilde{W}_t \}_{t \geq 0}$ correlated to $\{ W_t \}_{t \geq 0}$
by defining 
\begin{align}
    \tilde{W}_t^j=\sum_{i=1}^d \rho_{ij}W_t^i +  \left (1- \sum_{i=1}^d \rho_{ij}^2 \right)^{1/2} Z_t^j && \forall j \leq l
\end{align}
where $\rho_{ij} \in (-1,1)$, $\sum_{i=1}^d \rho_{ij}^2 \leq 1$ for every $j \leq l$ and $\sum_{i=1}^d \rho_{ij}\rho_{ik}=0$ for every $j \neq k$.\\
In this way we have the following correlation:
\begin{align*}
\mathbb{E}[dW_t^i d\tilde{W}_t^j]=\rho_{ij} dt  && \forall i \leq d, j \leq l
\end{align*}
Let's now consider the following system of stochastic differential equations:
\begin{equation} \label{eq:system}
\begin{cases}
dX_t^{\epsilon}=b(t,X_t^{\epsilon},Y_t^{\epsilon})dt+\sigma(t,X_t^{\epsilon},Y_t^{\epsilon})dW_t \\
X_0^{\epsilon}=x_0 \\
dY_t^{\epsilon}=\epsilon^{-1} B(t,X_t^{\epsilon},Y_t^{\epsilon})dt+\epsilon^{-1/2}C(t,X_t^{\epsilon},Y_t^{\epsilon})d\tilde{W}_t \\
Y_0^{\epsilon}=y_0
\end{cases}
\end{equation}
where $\epsilon \in (0,1]$, t $\in [0,T]$, $T$ is the time horizon , $x_0  \in \mathbb{R}^d$, $y_0 \in \mathbb{R}^l$, $b$, $\sigma$, $B$, $C$ are Borel functions:
\begin{align*}
b \colon & [0,T] \times \mathbb{R}^d \times \mathbb{R}^l  \longrightarrow \mathbb{R}^d \\
B \colon  &[0,T] \times \mathbb{R}^d \times \mathbb{R}^l  \longrightarrow \mathbb{R}^l \\
\sigma\colon  &[0,T] \times \mathbb{R}^d \times \mathbb{R}^l  \longrightarrow \mathbb{R}^d  \times \mathbb{R}^d \\
C\colon  &[0,T] \times \mathbb{R}^d \times \mathbb{R}^l  \longrightarrow \mathbb{R}^l  \times \mathbb{R}^l 
\end{align*}
Now we list the following hypothesis under which we will show the main theorem of this work:
\begin{hypothesis} \label{hp:lip}
There exist $L>0$, $\gamma>0$ such that:
 \begin{align*} 
&\abs{b(t,x_2,y_2)-b(s,x_1,y_1)}  \leq L (|t-s|^\gamma+|x_2-x_1|+|y_2-y_1|) \\
&\abs{\sigma(t,x_2,y_2)-\sigma(s,x_1,y_1)}  \leq L (|t-s|^\gamma+|x_2-x_1|+|y_2-y_1|) \\
&\abs{B(t,x_2,y_2)-B(s,x_1,y_1)}  \leq L (|t-s|^\gamma+|x_2-x_1|+|y_2-y_1|) \\
&\abs{C(t,x_2,y_2)-C(s,x_1,y_1)}  \leq L (|t-s|^\gamma+|x_2-x_1|+|y_2-y_1|)
\end{align*}
for every $0 \leq t,s \leq T$, $x_1,x_2 \in \mathbb{R}^d$, $y_1,y_2 \in \mathbb{R}^l$.
\end{hypothesis}
\begin{hypothesis} \label{hp:sub} There exists $M>0$ such that:
\begin{align*} 
&\abs{b(t,x,y)}  \leq M (1+\abs{x})  \\
&\abs{\sigma(t,x,y)}  \leq M (1+\abs{x}) \\
&\abs{B(t,x,y)} \leq M (1+\abs{x}+\abs{y}) \\
&\abs{C(t,x,y)}  \leq M (1+\abs{x}+\abs{y})
\end{align*}
for every $0 \leq t \leq T$, $x \in \mathbb{R}^d$, $y \in \mathbb{R}^l$.
\end{hypothesis}
\begin{remark}
Analogously to what happens in \cite{Khasminskii} we can't assume the full sublinearity of $b$ and $\sigma$ because the behaviour $Y^\epsilon$ is too fast.
\end{remark}

\begin{proposition}
Under hypothesis \ref{hp:lip}, \ref{hp:sub} equation \eqref{eq:system} has a unique strong solution $\{ (X_t^{\epsilon},Y_t^{\epsilon}) \}_{t \in [0,T]}$ for every $ \epsilon >0$.
\end{proposition}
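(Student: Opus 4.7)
The plan is to view \eqref{eq:system} as a single SDE in $\mathbb{R}^{d+l}$ driven by the standard $(d+l)$-dimensional Brownian motion $(W,Z)$ and invoke the classical existence and uniqueness theorem for SDEs with globally Lipschitz coefficients of at most linear growth (for instance the Karatzas--Shreve version). First I would substitute the definition of $\tilde{W}$ to rewrite the fast equation as
\begin{equation*}
dY_t^{\epsilon}=\epsilon^{-1} B(t,X_t^{\epsilon},Y_t^{\epsilon})dt+\epsilon^{-1/2}\widetilde{C}(t,X_t^{\epsilon},Y_t^{\epsilon})dW_t+\epsilon^{-1/2}\widehat{C}(t,X_t^{\epsilon},Y_t^{\epsilon})dZ_t,
\end{equation*}
where $\widetilde{C}$ and $\widehat{C}$ are obtained by multiplying $C$ by the correlation coefficients $\rho_{ij}$ and $(1-\sum_i \rho_{ij}^2)^{1/2}$ respectively. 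Setting $\Xi_t^{\epsilon}:=(X_t^{\epsilon},Y_t^{\epsilon})\in\mathbb{R}^{d+l}$ the system takes the standard form
\begin{equation*}
d\Xi_t^{\epsilon}=\mathfrak{b}^{\epsilon}(t,\Xi_t^{\epsilon})\,dt+\mathfrak{s}^{\epsilon}(t,\Xi_t^{\epsilon})\,d(W_t,Z_t),\qquad \Xi_0^{\epsilon}=(x_0,y_0),
\end{equation*}
with $\mathfrak{b}^{\epsilon}$ stacking $b$ and $\epsilon^{-1}B$, and $\mathfrak{s}^{\epsilon}$ stacking the blocks built from $\sigma$, $\epsilon^{-1/2}\widetilde{C}$ and $\epsilon^{-1/2}\widehat{C}$.

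Next I would verify the two classical hypotheses for the aggregate coefficients $(\mathfrak{b}^{\epsilon},\mathfrak{s}^{\epsilon})$, for each fixed $\epsilon\in(0,1]$. Global Lipschitz continuity in the state variable $(x,y)$ at each time $t$ follows directly from Hypothesis \ref{hp:lip} applied componentwise, the factors $\epsilon^{-1},\epsilon^{-1/2}$ and the bounded correlation coefficients only affecting the Lipschitz constant, which becomes an $\epsilon$-dependent $L_{\epsilon}>0$. Linear growth in $(x,y)$ follows from Hypothesis \ref{hp:sub}: for $b$ and $\sigma$ one has growth in $x$ directly from Hypothesis \ref{hp:sub}, while for $B$ and $C$ the bound is linear in $(x,y)$; combined with the prefactors one obtains an estimate of the form $|\mathfrak{b}^{\epsilon}(t,x,y)|+|\mathfrak{s}^{\epsilon}(t,x,y)|\leq M_{\epsilon}(1+|x|+|y|)$ uniformly in $t\in[0,T]$. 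The Hölder-in-time regularity from Hypothesis \ref{hp:lip} is stronger than the measurability needed and in particular gives joint Borel measurability of $\mathfrak{b}^{\epsilon},\mathfrak{s}^{\epsilon}$ on $[0,T]\times\mathbb{R}^{d+l}$.

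Once these two properties are in place, the standard Picard iteration argument on $[0,T]$ produces a pathwise unique strong solution $\Xi_t^{\epsilon}=(X_t^{\epsilon},Y_t^{\epsilon})$ adapted to the filtration generated by $(W,Z)$ with $\mathbb{E}\sup_{t\leq T}|\Xi_t^{\epsilon}|^2<\infty$, and this is precisely a strong solution of \eqref{eq:system}. There is no real obstacle here: the only point to keep track of is that the Lipschitz and growth constants deteriorate as $\epsilon\to 0$, but since $\epsilon>0$ is fixed in the statement this is harmless. The uniform-in-$\epsilon$ estimates on $X^{\epsilon}$ and $Y^{\epsilon}$ needed later for the averaging principle are a separate matter and are not required by this proposition.
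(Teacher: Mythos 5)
Your proposal is correct and follows exactly the route the paper takes: the paper's proof is a one-line remark that one rewrites \eqref{eq:system} with respect to the independent Brownian motions $W,Z$ via the linear transformation defining $\tilde{W}$ and then invokes the classical existence and uniqueness theorem, which is precisely what you carry out in detail. Your additional observations (that the Lipschitz and growth constants depend on $\epsilon$ but this is harmless for fixed $\epsilon$, and that the Hölder-in-time condition is only needed for measurability here) are accurate and consistent with the paper.
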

\begin{proof}
It follows by the classic theorem of existence and uniqueness of strong solutions after noticing that we can write system  \eqref{eq:system} with respect to the independent Brownian motions $W_t,Z_t$ by means of a linear transformation.
\end{proof}
Now we define the square d-dimensional matrix $a(t,x,y)=\sigma (t,x,y) \sigma^T(t,x,y)/2$ and we state the ergodic hypothesis.
\begin{hypothesis} \label{hp:est} There exist a continuous d-dimensional vector $\overline{b}(t,x)$ and a continuous square d-dimensional matrix $\overline{\sigma} (t,x)$ such that for every $ 0 \leq t\leq T$, $t_0 \geq 0$, $\tau \geq 0$, $x \in \mathbb{R}^d$, $y \in \mathbb{R}^l$

\begin{equation}\label{eq:erg1}
\left|\mathbf{E} \left [ \tau^{-1} \int_{t_0}^{t_0+\tau} b\left(t,x, y_{s}^{t,x, y}\right) d s-\overline{b}(t,x) \right] \right| \leq \overline{K}(\tau)\left(1+|x|^{2}+|y|^{2}\right)
\end{equation}
 
\begin{equation}\label{eq:erg2}
\left|\mathbf{E} \left [ \tau^{-1} \int_{t_0}^{t_0+\tau} a\left(t,x, y_{s}^{t,x, y}\right) d s-\overline{\sigma} (t,x) \overline{\sigma}^T(t,x)/2\right] \right| \leq \overline{K}(\tau)\left(1+|x|^{2}+|y|^{2}\right)
\end{equation}
 where 
 $$\lim_{\tau \to \infty} \overline{K}(\tau) = 0$$
 and $\{ y_s^{t,x,y}\}_{s \geq 0}$ is the unique strong solution to the frozen equation:  
 
\begin{align}\label{eq:frozen}
 d y_{s}^{t,x, y}=B\left(t,x, y_{s}^{t,x, y}\right) d s+C\left(t,x, y_{s}^{t,x, y}\right) d \tilde{W}_{s} && y_{0}^{t,x,y}=y
\end{align}
Moreover we assume that $\overline{b}(t,x), \overline{\sigma}(t,x)$ are locally Lipschitz in $x$ uniformly with respect to $ t \leq T$.
\end{hypothesis}
\begin{flushleft}
Now we define the matrix $\overline{ a}(t,x)=\overline{\sigma} (t,x) \overline{\sigma}^T(t,x)/2$.
\end{flushleft}

\begin{remark}
Hypothesis \ref{hp:est} is the generalization to the non autonomous case of Khasminskii's ergodic hypothesis in \cite{Khasminskii}. 
\end{remark}

\begin{remark}
It is not necessary to assume that $\overline{b}$ and $\overline{\sigma}$ are continuous in $(t,x)$ since it is possible to prove it by using \eqref{eq:erg1} and \eqref{eq:erg2}. On the other hand in order to have a unique strong solution of the averaged equation we must assume that they are locally Lipschitz as it doesn't follow from \eqref{eq:erg1} and \eqref{eq:erg2}. 
\end{remark}

\begin{remark}
$\{ y_s^{t,x,y}\}_{s \geq 0}$ is the time changed process $s \to \epsilon s$ with the time $t$-component and the $x$-component frozen. Moreover the frozen equation has a unique strong solution thanks to hypothesis \ref{hp:lip}, \ref{hp:sub}.
\end{remark}

\begin{remark}
Letting $\tau \to +\infty$ in \eqref{eq:erg1}, \eqref{eq:erg2} we obtain the following characterizations:
\begin{align}\label{eq:charact_b}
\overline{b}(t,x)=\lim_{\tau \to +\infty} \mathbf{E} \left [\tau^{-1} \int_{t_0}^{t_0+\tau} b\left(t,x, y_{s}^{t,x, y}\right) d s \right] && \forall y \in \mathbb{R}^l,t_0 \geq 0
\end{align}
\begin{align}\label{eq:charact_sigma}
\overline{a}(t,x)=\lim_{\tau \to +\infty} \mathbf{E} \left [\tau^{-1} \int_{t_0}^{t_0+\tau} a\left(t,x, y_{s}^{t,x, y}\right) d s \right] &&     \forall y \in \mathbb{R}^l,t_0 \geq 0
\end{align}
\end{remark}
\begin{remark}
For what concerns $\overline{\sigma} (t,x)$ usually one calculates first the matrix $\overline{a}(t,x)$ as the ergodic limit \eqref{eq:charact_sigma} and proves that \eqref{eq:erg2} holds. Then  the matrix $\overline{\sigma} (t,x)$ can be chosen in the following way: since $a(t,x,y)$ is symmetric and positive semidefinite this implies that $\overline{a}(t,x)$ is also symmetric and positive semidefinite and so it has a unique symmetric positive semidefinite square root $\overline{\sigma} (t,x)=\sqrt{2\overline{a}(t,x)}$. 
\end{remark}
\begin{remark}
Let's observe that the matrix $\overline{\sigma} (t,x)$ satisfying \eqref{eq:erg2} could be not unique.   In fact, given the symmetric positive semidefinite matrix $\overline{a}(t,x)$, the matrix $\overline{\sigma} (t,x)$ such that $\overline{ a}(t,x)=\overline{\sigma} (t,x) \overline{\sigma}^T(t,x)/2$ in general is not unique. This means that we would end up with different averaged equations of the form \eqref{eq:main}. Anyway this is not a problem because the law of the solution of \eqref{eq:main} depends on $\overline{\sigma} (t,x)$ only through $\overline{a}(t,x)$, e.g. see \cite[chapter 5]{strook}.
\end{remark}
\begin{flushleft}
Let now $B_B(\mathbb{R}^l)$ be the space of bounded measurable functions.
\end{flushleft}

\begin{remark}
\label{rem:candidates}
If for fixed $t, x$ the transition semigroup of the frozen equation $ \{ \mathcal{P}^{t,x}_s \}_{s \geq 0}$ defined by 
\begin{align*}
\mathcal{P}^{t,x}_s\phi(y)=\mathbb{E}[\phi(y_s^{t,x,y})] && \phi \in B_B(\mathbb{R}^l),y \in \mathbb{R}^l
\end{align*}
has a unique invariant measure $\mu^{t,x}$ with a certain rate of decay to the equilibrium, then the vector $$\int_{\mathbb{R}^l}b(t,x,y)\mu^{t,x}(dy)$$ and the matrix $$\int_{\mathbb{R}^l}a(t,x,y)\mu^{t,x}(dy)$$ are the natural candidates for $\overline{b}(t,x)$ and $\overline{a}(t,x)$ respectively.
\end{remark}
\begin{remark} \label{rem:sublinearity_bar}
Thanks to \eqref{eq:charact_b}, \eqref{eq:charact_sigma} $\overline{b}$, $\overline{\sigma}$ satisfy
\begin{align*} 
\abs{\overline{b}(t,x)} & \leq M (1+\abs{x})  \\
\abs{\overline{\sigma}(t,x)} & \leq M (1+\abs{x})
\end{align*}
for every $0 \leq t \leq T$, $x \in \mathbb{R}^d$.
\end{remark}
Now we assume an $\mathcal{L}^2$-bound of the fast component:
\begin{hypothesis} \label{hp:expectation} 
There exists $C>0$ such that
\begin{equation*}
\sup_{\epsilon \in (0,1]} \sup_{0 \leq t \leq T} \mathbb{E}\left[ \abs{Y_t^\epsilon}^2 \right] \leq C
\end{equation*}
\end{hypothesis}
\begin{remark}
Hypotheses \ref{hp:est}, \ref{hp:expectation} are implicit conditions: hence they need to be proven valid for a given slow-fast system of stochastic differential equations. In the next section we will provide an explicit condition under which they are satisfied, following the idea of remark \ref{rem:candidates}. 
\end{remark}

We can now introduce the main theorem of this work which states that the averaging principle holds for system \eqref{eq:system}.
\begin{theorem}[Averaging Principle] \label{th:averaging}
Assume that hypotheses \ref{hp:lip}, \ref{hp:sub}, \ref{hp:est}, \ref{hp:expectation} hold. Then  $ X^{\epsilon} \to \overline{X}$ weakly in  $\mathcal{C}[0,T]^d$ as $\epsilon \to 0$ where the process $\overline{X}$ is the unique strong solution of the averaged equation
\begin{align} \label{eq:main}
d\overline{X} _t = \overline{b}(t,\overline{X}_t)dt+\overline{\sigma}(t,\overline{X}_t)dW_t && \overline{X}_0=x_0, \forall t \in[0,T]
\end{align}
\end{theorem}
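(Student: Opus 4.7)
My plan is to follow Khasminskii's time-discretization scheme, adapted to the non-autonomous setting, and identify the weak limit of $\{X^\epsilon\}$ through the martingale problem associated with the averaged generator. The proof naturally splits into three stages: (i) uniform moment bounds and tightness of $\{X^\epsilon\}_{\epsilon}$ in $\mathcal{C}[0,T]^d$; (ii) a block-wise comparison of $Y^\epsilon$ with a frozen process to which Hypothesis \ref{hp:est} can be applied; (iii) identification of any weak limit as the unique solution of the martingale problem for \eqref{eq:main}.

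\textbf{Tightness.} Hypothesis \ref{hp:sub}, the Burkholder--Davis--Gundy inequality and Gronwall's lemma yield $\sup_{\epsilon \in (0,1]}\sup_{t\leq T}\mathbb{E}|X^\epsilon_t|^p \leq C_p$ for every $p \geq 2$, together with the increment estimate $\mathbb{E}|X^\epsilon_t - X^\epsilon_s|^p \leq C_p|t-s|^{p/2}$. Kolmogorov's criterion then gives tightness of $\{X^\epsilon\}$ in $\mathcal{C}[0,T]^d$.

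\textbf{Block averaging.} Choose a mesh $\Delta=\Delta(\epsilon)$ with $\Delta \to 0$ and $\Delta/\epsilon \to \infty$, set $t_k = k\Delta$, and on each block $[t_k,t_{k+1}]$ introduce the frozen fast process $\widehat{Y}^\epsilon_s := y^{t_k, X^\epsilon_{t_k}, Y^\epsilon_{t_k}}_{(s - t_k)/\epsilon}$, where $y^{t,x,y}$ solves \eqref{eq:frozen}. Conditionally on $\mathcal{F}_{t_k}$, $\widehat{Y}^\epsilon$ is a copy of the frozen process with deterministic parameters $(t_k, X^\epsilon_{t_k}, Y^\epsilon_{t_k})$, so applying \eqref{eq:erg1} with $\tau = \Delta/\epsilon$ and taking expectation with the aid of Hypothesis \ref{hp:expectation} gives
$$\mathbb{E}\Bigl| \tfrac{1}{\Delta}\int_{t_k}^{t_{k+1}} b(t_k,X^\epsilon_{t_k},\widehat{Y}^\epsilon_s)\,ds - \overline{b}(t_k,X^\epsilon_{t_k})\Bigr| \leq \overline{K}(\Delta/\epsilon)\bigl(1+\mathbb{E}|X^\epsilon_{t_k}|^2+\mathbb{E}|Y^\epsilon_{t_k}|^2\bigr),$$
and analogously for $a=\sigma\sigma^T/2$ via \eqref{eq:erg2}. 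Controlling the replacement $(s,X^\epsilon_s,Y^\epsilon_s)\mapsto(t_k,X^\epsilon_{t_k},\widehat{Y}^\epsilon_s)$ via Hypothesis \ref{hp:lip}, and summing the $T/\Delta$ blocks, one obtains
$$\mathbb{E}\Bigl|\int_0^t \bigl[ f(s,X^\epsilon_s,Y^\epsilon_s) - \overline{f}(s,X^\epsilon_s)\bigr] ds\Bigr| \xrightarrow[\epsilon\to 0]{} 0, \qquad f = b,\; a,$$
provided $\Delta$ is tuned so that both $\overline{K}(\Delta/\epsilon)$ and the block-replacement errors (of orders $\Delta^{\gamma}$ and $\sqrt{\Delta}$, summed across blocks) vanish.

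\textbf{Identification and main obstacle.} By It\^o's formula, for every $\phi \in C^2_c(\mathbb{R}^d)$ the process $\phi(X^\epsilon_t) - \phi(x_0) - \int_0^t [b\cdot\nabla\phi + a:\nabla^2\phi](s,X^\epsilon_s,Y^\epsilon_s)\,ds$ is a martingale. Combined with the $L^1$-convergence above, any weak subsequential limit $X^*$ of $\{X^\epsilon\}$ solves the martingale problem for the generator $\overline{b}(t,\cdot)\cdot\nabla + \overline{a}(t,\cdot):\nabla^2$. The local Lipschitz continuity of $\overline{b},\overline{\sigma}$ from Hypothesis \ref{hp:est} together with the linear growth bound of Remark \ref{rem:sublinearity_bar} yield uniqueness in law of this martingale problem, so the entire family $X^\epsilon$ converges weakly to $\overline{X}$. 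The most delicate point is the block-averaging step: replacing $Y^\epsilon$ by $\widehat{Y}^\epsilon$ under the integral of $b(t_k, X^\epsilon_{t_k}, \cdot)$ cannot rely on a pointwise comparison, because a naive Gronwall argument on the rescaled fast time-scale $[0, \Delta/\epsilon]$ produces an exponential amplification in the absence of dissipativity. One must therefore track the discrepancy only through its averaged effect against $b$ and $a$, exploiting the Markov property at each $t_k$ and the uniform $\mathcal{L}^2$-bound of Hypothesis \ref{hp:expectation}, and choose $\Delta$ to carefully balance $\overline{K}(\Delta/\epsilon)\to 0$ against the summed slow-increment errors.
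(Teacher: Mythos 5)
Your overall architecture (Khasminskii time-discretization, tightness via moment and increment estimates, identification of the limit through the martingale problem for $\overline{b}$, $\overline{a}$, uniqueness from the local Lipschitz and linear-growth properties of the averaged coefficients) is the same as the paper's. However, there is a genuine gap at exactly the point you flag as ``the most delicate'': the comparison of $Y^{\epsilon}_s$ with the block-frozen process $\widehat{Y}^{\epsilon}_s$ on $[t_k,t_{k+1}]$. You assert that a pointwise $\mathcal{L}^2$ comparison via Gronwall cannot work because of exponential amplification on the rescaled time-scale, and that one must instead ``track the discrepancy only through its averaged effect against $b$ and $a$'' --- but you never say how to do that, so the replacement $Y^{\epsilon}_s \mapsto \widehat{Y}^{\epsilon}_s$ under the integral, and with it the whole identification step, is left unproved. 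Moreover the assertion itself is mistaken: the paper resolves this precisely by the pointwise Gronwall estimate, made to work by the specific calibration $\Delta = \epsilon(\log\epsilon^{-1})^{1/4}$. With this choice the Gronwall exponent is of order $\Delta^2\epsilon^{-2}+\Delta\epsilon^{-1} = (\log\epsilon^{-1})^{1/2}+(\log\epsilon^{-1})^{1/4}$, so the amplification factor $\exp\bigl(C(\log\epsilon^{-1})^{1/2}\bigr)$ grows slower than any power of $\epsilon^{-1}$, while the prefactor $(\Delta\epsilon^{-2}+\epsilon^{-1})(\Delta^{1+2\gamma}+C\Delta^2)$ is polynomially small in $\epsilon$; the product therefore vanishes, yielding $\sup_{t\leq T}\mathbb{E}\bigl[|Y^{\epsilon}_t-\hat{Y}^{\epsilon}_t|^2\bigr]\to 0$. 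Leaving $\Delta$ unspecified and declaring the direct route impossible means your proof is missing its central estimate; you need either this calibrated Gronwall bound or a concrete substitute for it.

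A secondary but real inaccuracy: Hypothesis \ref{hp:est} controls $\bigl|\mathbb{E}[\tau^{-1}\int b\,ds-\overline{b}]\bigr|$, i.e.\ the absolute value sits \emph{outside} the expectation. After conditioning on $\mathcal{F}_{t_k}$ and applying the Markov property, what you obtain is a bound on $\mathbb{E}\bigl[\bigl|\mathbb{E}[\,\cdot\,|\mathcal{F}_{t_k}]\bigr|\bigr]$, not on $\mathbb{E}[|\cdot|]$ as written in your display, and consequently your claimed unconditional $\mathcal{L}^1$ convergence of $\int_0^t[f(s,X^{\epsilon}_s,Y^{\epsilon}_s)-\overline{f}(s,X^{\epsilon}_s)]\,ds$ does not follow from the stated hypotheses as you invoke them. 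The conditional version (which is what the paper proves and what the martingale-problem identification actually requires, since one tests against $\mathcal{F}_{t_1}$-measurable functionals) should be used instead; if you insist on the unconditional statement you would additionally need to control the fluctuation part of each block, e.g.\ by an orthogonality argument for the martingale differences across blocks.
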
 
\begin{flushleft}
Here we have considered the following definition:
\end{flushleft}
\begin{definition}
Let $\{ (\Omega_\epsilon,\mathcal{F}_\epsilon,P_\epsilon) \}_{\epsilon>0}$ be a family of probability spaces. On each of them consider a random variable $X_\epsilon$ with values in a complete metric space $S$. Let $(\Omega,\mathfrak{F},P)$ be another probability space on which a random variable $X$ with values in $S$ is given. We say that $X_\epsilon \to X$ weakly in $S$ as $\epsilon \to 0$ if the family of probability measures induced by $\{X_\epsilon\}_{\epsilon>0 }$ converges weakly as $\epsilon \to 0$ to the probability measure induced by $X$.
\end{definition}
\begin{remark}
We remind that $\mathcal{C}[0,T]^d$ is a complete metric space with metric
\begin{align*}
\rho(y_2,y_1) = \sup_{t \in[0,T] } \abs{y_2(t)-y_1(t)} &&  y_1,y_2 \in \mathcal{C}[0,T]^d
\end{align*}
\end{remark}
\begin{remark}\label{rem:uniqueness_averaged_eq}
The averaged equation \eqref{eq:main} has a unique strong solution thanks to hypothesis \ref{hp:est} and remark \ref{rem:sublinearity_bar} .
\end{remark}
\subsection{Proof of the Main Theorem}
We start by constructing an auxiliary process obtained by a discretization in time: fix $\Delta \in [0,T]$ and let $K=0...\lceil T/ \Delta \rceil-1$ 
\begin{equation}\label{eq:discretization}
\begin{cases}
d\hat {X}_t^{\epsilon}=b(K \Delta, X_{K \Delta}^{\epsilon},\hat{Y}_t^{\epsilon})dt+\sigma(K \Delta,X_{K \Delta}^{\epsilon},\hat{Y}_t^{\epsilon})dW_t & t \in [K \Delta, ((K+1) \Delta) \wedge T )\\
\hat {X}_{0}^{\epsilon}=x_0 \\
d\hat{Y}_t^{\epsilon}=\epsilon^{-1} B(K \Delta,X_{K \Delta}^{\epsilon},\hat{Y}_t^{\epsilon})dt+\epsilon^{-1/2}C(K \Delta,X_{K \Delta}^{\epsilon},\hat{Y}_t^{\epsilon})d\tilde{W}_t & t \in [K \Delta, ((K+1) \Delta) \wedge T) \\
\hat {Y}_{K \Delta}^{\epsilon}=Y_{K \Delta}^{\epsilon} 
\end{cases}
\end{equation}
and we choose the length of the intervals $\Delta=\Delta(\epsilon) = \epsilon (\log {\epsilon^{-1}})^{1/4}$ such that as $\epsilon \to 0$ then $\Delta\to 0$ and $\Delta / \epsilon \to + \infty$.
\begin{remark}
Following Khasminskii's idea the proof is based on the previous discretization. We notice that in each time interval the time component and the slow component of the coefficients are fixed while the fast component is free to move. In fact this is the idea of the proof: to consider an approximated process in which only the fast component evolves because it is much faster than the other two $t$ and $x$ which can then be considered fixed.
\end{remark}
We now prove the following a priori estimates:
\begin{lemma} 
Let $p \geq 2$, then there exists a constant $C=C(p,T)>0$ such that $\forall \epsilon >0 $
\begin{equation}
\mathbb{E}\left[\sup_{t \in [0,T]} |X_t^{\epsilon}|^p \right] \leq C(1+|x_0|^p)  \label{diseq1:lemma}
\end{equation}
\begin{equation}
\mathbb{E}\left[\sup_{t \in [0,T]} |\hat{X}_t^{\epsilon}|^p \right] \leq C(1+|x_0|^p) \label{diseq2:lemma}
\end{equation}

\end{lemma}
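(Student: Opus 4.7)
The plan is to establish both bounds by standard stochastic calculus arguments, exploiting the crucial fact that in hypothesis \ref{hp:sub} the sublinearity of $b$ and $\sigma$ involves only $|x|$ and not $|y|$. This means the fast component $Y^\epsilon$ never enters the estimates on the slow component, so no bound on $Y^\epsilon$ (such as hypothesis \ref{hp:expectation}) is needed for these estimates.

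First I would prove \eqref{diseq1:lemma}. Writing $X^\epsilon$ in integral form
\begin{equation*}
X_t^\epsilon = x_0 + \int_0^t b(s,X_s^\epsilon,Y_s^\epsilon)\dd s + \int_0^t \sigma(s,X_s^\epsilon,Y_s^\epsilon) \dd W_s,
\end{equation*}
I apply the elementary inequality $|a+b+c|^p\leq 3^{p-1}(|a|^p+|b|^p+|c|^p)$ together with H\"older's inequality on the drift and the Burkholder--Davis--Gundy inequality on the stochastic integral, obtaining for every $t\leq T$
\begin{equation*}
\mathbb{E}\Big[\sup_{s\leq t} |X_s^\epsilon|^p\Big] \leq C_p|x_0|^p + C_p T^{p-1}\!\!\int_0^t \mathbb{E}[|b(s,X_s^\epsilon,Y_s^\epsilon)|^p] \dd s + C_p T^{p/2-1}\!\!\int_0^t \mathbb{E}[|\sigma(s,X_s^\epsilon,Y_s^\epsilon)|^p] \dd s .
\end{equation*}
Using hypothesis \ref{hp:sub} gives $|b|^p,|\sigma|^p \leq 2^{p-1}M^p(1+|X_s^\epsilon|^p)$, so that
\begin{equation*}
\mathbb{E}\Big[\sup_{s\leq t} |X_s^\epsilon|^p\Big] \leq C(p,T,M)(1+|x_0|^p) + C(p,T,M)\int_0^t \mathbb{E}\Big[\sup_{r\leq s} |X_r^\epsilon|^p\Big] \dd s.
\end{equation*}
Gronwall's lemma then yields \eqref{diseq1:lemma}. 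Note that the right-hand side of this bound is independent of $\epsilon$ because the estimates above never invoke $Y^\epsilon$.

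For \eqref{diseq2:lemma} I would exploit that, by the very construction of the discretized process in \eqref{eq:discretization}, the process $\hat X^\epsilon$ satisfies
\begin{equation*}
\hat X_t^\epsilon = x_0 + \int_0^t b(\pi_\Delta(s),X_{\pi_\Delta(s)}^\epsilon,\hat Y_s^\epsilon)\dd s + \int_0^t \sigma(\pi_\Delta(s),X_{\pi_\Delta(s)}^\epsilon,\hat Y_s^\epsilon)\dd W_s,
\end{equation*}
where $\pi_\Delta(s) = \lfloor s/\Delta\rfloor \Delta$. The key observation is that the coefficients depend on $X^\epsilon$ evaluated at the grid point $\pi_\Delta(s)$, not on $\hat X^\epsilon$ itself, so no Gronwall argument on $\hat X^\epsilon$ is needed. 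Repeating the H\"older--BDG step and then using hypothesis \ref{hp:sub} together with the already established bound \eqref{diseq1:lemma}, I get
\begin{equation*}
\mathbb{E}\Big[\sup_{s\leq t}|\hat X_s^\epsilon|^p\Big] \leq C(p,T,M)\Big(1+|x_0|^p + \int_0^t \mathbb{E}[|X_{\pi_\Delta(s)}^\epsilon|^p]\dd s\Big) \leq C(p,T)(1+|x_0|^p),
\end{equation*}
which is \eqref{diseq2:lemma}.

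I do not anticipate any real obstacle; the only point to be careful about is that hypothesis \ref{hp:sub} gives linear growth of $b,\sigma$ purely in $x$. Were there a $|y|$-dependence on the right-hand side, one would need an a priori $\mathcal{L}^p$-bound on $Y^\epsilon$ uniform in $\epsilon$, which is not available at this stage (hypothesis \ref{hp:expectation} only provides a second moment). As written, however, the two estimates are clean applications of BDG and Gronwall, with \eqref{diseq2:lemma} reducing to \eqref{diseq1:lemma} without a further Gronwall step.
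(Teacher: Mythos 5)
Your argument follows the same route as the paper's: integral form, the elementary $3^{p-1}$ inequality, H\"older on the drift, Burkholder--Davis--Gundy on the stochastic integral, the sublinearity of hypothesis \ref{hp:sub} (and you correctly identify the key structural point, namely that $b$ and $\sigma$ grow only in $|x|$ and not in $|y|$, so no moment bound on $Y^\epsilon$ is needed), and finally Gronwall. Your treatment of \eqref{diseq2:lemma} is also sound, and in fact slightly more explicit than the paper's one-line remark: you correctly observe that the coefficients of $\hat X^\epsilon$ depend on $X^\epsilon$ frozen at the grid points, so \eqref{diseq2:lemma} follows from \eqref{diseq1:lemma} without a second Gronwall iteration.

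There is, however, one genuine (if standard) gap. You apply Gronwall's lemma directly to $v(t)=\mathbb{E}\bigl[\sup_{s\leq t}|X_s^\epsilon|^p\bigr]$ without first establishing that this quantity is finite. Gronwall's inequality gives no information from a relation of the form $v(t)\leq C_1+C_2\int_0^t v(s)\dd s$ if $v$ is allowed to take the value $+\infty$, and a priori nothing rules this out for the supremum of the $p$-th power of the solution. The paper closes this gap by localization: it introduces the stopping time $\tau_R=\inf(t>0,\ |X_t^\epsilon|\geq R)$ and the stopped process $X_R^\epsilon(t)=X^\epsilon_{\tau_R\wedge t}$, for which $v(t)\leq\max(R^p,|x_0|^p)<+\infty$, runs the identical chain of estimates (with constants independent of $R$ and $\epsilon$), applies Gronwall to the stopped process, and then lets $R\to\infty$ using that $\tau_R\to+\infty$ a.s.\ together with monotone convergence. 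Your estimates survive this localization verbatim, so the fix is routine, but as written the application of Gronwall is not justified.
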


\begin{proof}
Let's fix $R>0$ and consider the stopping time $\tau_R=\inf (t>0, |X_t^{\epsilon}| \geq R)$. \\
Let's define $X_R^{\epsilon}(t)=X_{\tau_R \land t }^{\epsilon}$, then:

\begin{equation*}
X_R^{\epsilon}(t)=x_0+\int_{0}^{t} b(r,X_R^{\epsilon}(r),Y_r^{\epsilon}) \mathbbm{1}_{r \leq \tau_R} dr+\int_{0}^{t} \sigma(r,X_R^{\epsilon}(r),Y_r^\epsilon) \mathbbm{1}_{r \leq \tau_R} dW_r
\end{equation*}
It follows:

\begin{align*} 
\mathbb{E}\left [\sup_{s \in [0,t]} |X_R^{\epsilon}(s)|^p \right] & \leq 3 ^{p-1}|x_0|^p + 3 ^{p-1} \mathbb{E} \left [\sup_{s \in [0,t]} \abs{\int_{0}^{s} b(r,X_R^{\epsilon}(r),Y_r^{\epsilon}) \mathbbm{1}_{r \leq \tau_R} dr}^p \right]  \\ 
& + 3 ^{p-1} \mathbb{E} \left [\sup_{s \in [0,t]} \abs{\int_{0}^{s} \sigma(r,X_R^{\epsilon}(r),Y_r^{\epsilon}) \mathbbm{1}_{r \leq \tau_R} dW_r}^p \right ] 
\end{align*}
By Hölder inequality and by sublinearity we obtain the estimate for the second term on the right hand side:

\begin{align*}
\mathbb{E} \left [\sup_{s \in [0,t]} \abs{\int_{0}^{s} b(r,X_R^{\epsilon}(r),Y_r^{\epsilon}) \mathbbm{1}_{r \leq \tau_R} dr}^p \right]
& \leq t^{p-1} \mathbb{E} \left [ \int_{0}^{t} \abs {b(r,X_R^{\epsilon}(r),Y_r^{\epsilon}) \mathbbm{1}_{r \leq \tau_R} }^p dr \right] \\
& \leq T^{p-1} M^p \mathbb{E} \left [ \int_{0}^{t} (1+|X_R^{\epsilon}(r)|)^p dr \right] 
\end{align*}
By Burkholder-Davis-Gundy and Hölder inequalities and by sublinearity we have the estimate for the third term on the right hand side:
\begin{align*}
\mathbb{E} \left [\sup_{s \in [0,t]} \abs{\int_{0}^{s} \sigma(r,X_R^{\epsilon}(r),Y_r^{\epsilon}) \mathbbm{1}_{r \leq \tau_R} dW_r}^p \right ] & \leq C_p t^{(p-2)/2} \mathbb{E} \left [ \int_{0}^{t} \abs {\sigma(r,X_R^{\epsilon}(r),Y_r^{\epsilon}) }^p dr \right] \\
& \leq C_p T^{(p-2)/2} M^p \mathbb{E} \left [ \int_{0}^{t} (1+|X_R^{\epsilon}(r)|)^p dr \right] 
\end{align*}
Putting everything together we obtain:

\begin{align*}
\mathbb{E}\left [\sup_{s \in [0,t]} |X_R^{\epsilon}(s)|^p \right] & \leq 3^{p-1} x_0 + C_1 \mathbb{E} \left [ \int_{0}^{t} (1+|X_R^{\epsilon}(r)|)^p dr \right]  \notag \\ 
& \leq C_2 (1+|x_0|^p)+C_3 \int_0^t \mathbb{E} \left [\sup_{s \in [0,r]}|X_R^{\epsilon}(s)|^p dr \right ]
\end{align*}
where the constants $C_1,C_2,C_3$ are independent of $\epsilon$.\\
Defining $$v(t)=\mathbb{E}\left [\sup_{s \in [0,t]} |X_R^{\epsilon}(s)|^p \right] $$ 
we can apply Gronwall's Lemma because $v(t) \leq \max (R^p, |x_0|^p) < + \infty $ obtaining:

\begin{equation}\label{eq:thesis_lemma1}
v(T)=\mathbb{E}\left [\sup_{s \in [0,T]} |X_R^{\epsilon}(s)|^p \right] \leq C_2(1+|x_0|^p) e^{T C_3}=C(1+|x_0|^p) 
\end{equation}
where the constant $C$ is independent of $\epsilon$.\\
Letting $R \rightarrow \infty$ we see that $\tau_R \rightarrow + \infty$ a.s. with respect to $\mathbb{P}$ and this implies:
\begin{align*}
\lim_{R\to + \infty} \sup_{s \in [0,T]} |X_R^{\epsilon}(s)|^p = \sup_{s \in [0,T]} |X^{\epsilon}(s)|^p && a.s.
\end{align*}
Applying Beppo Levi's theorem to~\eqref{eq:thesis_lemma1} we get \eqref{diseq1:lemma}.\\
Using~\eqref{diseq1:lemma} it's easy to prove~\eqref{diseq2:lemma}.
\end{proof}
We now prove the following lemma:

\begin{lemma} \label{lemma:lemma}
Let $p \geq 2$, then there exists a constant $C=C(p,T)>0$ such that $\forall \epsilon >0$
\begin{equation} \label{eq:lemma_delta1}
\mathbb{E}\left[ |\hat{X}_{t_2}^{\epsilon}-\hat{X}_{t_1}^{\epsilon}|^p \right] \leq C |t_2-t_1|^{p/2}
\end{equation}
\begin{align} \label{eq:lemma_delta2}
\mathbb{E}\left[|X_{t_2}^{\epsilon}-X_{t_1}^{\epsilon}|^p \right] \leq C |t_2-t_1|^{p/2}
\end{align}
for every  $ t_1,t_2 \in [0,T]$.
\end{lemma}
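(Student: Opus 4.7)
Both estimates are Kolmogorov--Chentsov type bounds on the time regularity of the processes, and I would establish them by the same template: start from the integral representation of the increment, split into a drift and a stochastic integral piece via $|a+b|^p\le 2^{p-1}(|a|^p+|b|^p)$, and estimate each piece using H\"older's inequality, Burkholder--Davis--Gundy, the sublinearity in Hypothesis \ref{hp:sub}, and the a priori bound \eqref{diseq1:lemma} just proved. Throughout I assume without loss of generality $t_1<t_2$.

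For \eqref{eq:lemma_delta2} I would write
\begin{equation*}
X^{\epsilon}_{t_2}-X^{\epsilon}_{t_1}=\int_{t_1}^{t_2} b(r,X^{\epsilon}_r,Y^{\epsilon}_r)\,dr+\int_{t_1}^{t_2} \sigma(r,X^{\epsilon}_r,Y^{\epsilon}_r)\,dW_r.
\end{equation*}
The drift piece I would bound by one application of H\"older in time, producing $|t_2-t_1|^{p-1}\int_{t_1}^{t_2}\mathbb{E}[|b|^p]\,dr$, and then by the sublinearity, which \emph{crucially does not involve $y$} for $b$ and $\sigma$, together with \eqref{diseq1:lemma}: this gives $\mathbb{E}[|b(r,X^{\epsilon}_r,Y^{\epsilon}_r)|^p]\le M^p\,\mathbb{E}[(1+|X^{\epsilon}_r|)^p]\le C(1+|x_0|^p)$ uniformly in $\epsilon$ and $r\in[0,T]$, so the drift contribution is of order $|t_2-t_1|^p\le T^{p/2}|t_2-t_1|^{p/2}$. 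The stochastic integral piece I would handle by BDG followed by a second H\"older in time (valid because $p\ge 2$), yielding $C_p\,|t_2-t_1|^{p/2-1}\int_{t_1}^{t_2}\mathbb{E}[|\sigma|^p]\,dr$, and the same sublinearity-plus-\eqref{diseq1:lemma} argument delivers $C|t_2-t_1|^{p/2}$. Summing the two contributions proves \eqref{eq:lemma_delta2}.

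For \eqref{eq:lemma_delta1} the scheme is identical, with the coefficients replaced by the piecewise-frozen integrands $r\mapsto b(K\Delta,X^{\epsilon}_{K\Delta},\hat Y^{\epsilon}_r)$ and $r\mapsto \sigma(K\Delta,X^{\epsilon}_{K\Delta},\hat Y^{\epsilon}_r)$ on each subinterval $[K\Delta,(K+1)\Delta\wedge T)$. By Hypothesis \ref{hp:sub} these integrands are pointwise bounded by $M(1+|X^{\epsilon}_{K\Delta}|)\le M(1+\sup_{s\in[0,T]}|X^{\epsilon}_s|)$, whose $p$-th moment is again controlled uniformly in $K$ and $\epsilon$ by \eqref{diseq1:lemma}; running the same H\"older/BDG argument yields \eqref{eq:lemma_delta1}. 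I do not foresee a substantive obstacle, precisely because the sublinearity of $b$ and $\sigma$ is in $x$ only, so moments of $Y^\epsilon$ or $\hat Y^\epsilon$ never enter and Hypothesis \ref{hp:expectation} is not invoked in this lemma; the only mild care to take is in handling increments that straddle several subintervals, which is resolved by observing that the piecewise-defined integrands still define a single pathwise It\^o representation on $[t_1,t_2]$ so that the H\"older and BDG inequalities apply verbatim.
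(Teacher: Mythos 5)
Your proposal is correct and follows essentially the same route as the paper: the integral representation of the increment, the $2^{p-1}$ splitting, H\"older for the drift term, Burkholder--Davis--Gundy plus H\"older for the stochastic integral, and then the sublinearity of $b,\sigma$ (in $x$ only) combined with the a priori bound \eqref{diseq1:lemma}. The paper merely treats $\hat X^\epsilon$ first and states that $X^\epsilon$ is handled the same way, whereas you do the reverse; the substance is identical.
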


\begin{proof}
Let $t_1 \leq t_2 \in [0,T]$, by usual calculation we have:
\begin{align*}
\mathbb{E}[ |\hat{X}_{t_2}^{\epsilon}-\hat{X}_{t_1}^{\epsilon}|^p] &
\leq 2^{p-1} \mathbb{E} \left [\abs{ \int_{t_1}^{t_2} b(\lfloor s/ \Delta \rfloor \Delta,X_{(\lfloor s/ \Delta \rfloor \Delta)}^{\epsilon},\hat{Y}_s^{\epsilon})ds}^p \right]\\
& + 2^{p-1}  \mathbb{E} \left [\abs{ \int_{t_1}^{t_2} \sigma(\lfloor s/ \Delta \rfloor \Delta,X_{(\lfloor s/ \Delta \rfloor \Delta)}^{\epsilon},\hat{Y}_s^{\epsilon})dW_s}^p  \right ] \\ &
\leq 2^{p-1} \mathbb{E} \left [ \abs{t_2-t_1 }^{p-1} \int_{t_1}^{t_2} \abs{ b(\lfloor s/ \Delta \rfloor \Delta,X_{(\lfloor s/ \Delta \rfloor \Delta)}^{\epsilon},\hat{Y}_s^{\epsilon})}^{p} ds  \right ] \\ &
+2^{p-1} C_p \mathbb{E} \left [ \abs{t_2-t_1}^{(p-2)/2} \int_{t_1}^{t_2} \abs{ \sigma (\lfloor s/ \Delta \rfloor \Delta,X_{(\lfloor s/ \Delta \rfloor \Delta)}^{\epsilon},\hat{Y}_s^{\epsilon})}^{p} ds  \right ] \\ &
\leq 2^{p-1} \abs{t_2-t_1 }^{p-1} \mathbb{E} \left [ \int_{t_1}^{t_2} M^p \left (1+\abs{X_{(\lfloor s/ \Delta \rfloor \Delta)}^{\epsilon}} \right )^p ds  \right ] \\ &
+2^{p-1} C_p \abs{t_2-t_1 }^{(p-2)/2} \mathbb{E} \left [ \int_{t_1}^{t_2} M^p \left (1+\abs{X_{(\lfloor s/ \Delta \rfloor \Delta)}^{\epsilon}} \right )^p ds  \right ] 
\end{align*}
where $C_p$ is independent of $\epsilon$.\\
Finally using~\eqref{diseq1:lemma} we get~\eqref{eq:lemma_delta1}, in fact:
\begin{align*}
\mathbb{E}[|\hat{X}_{t_2}^{\epsilon}-\hat{X}_{t_1}^{\epsilon}|^p] & \leq C_1 \abs{t_2-t_1}^{p-1} \abs{t_2-t_1} +C_2  \abs{t_2-t_1}^{(p-2)/2} \abs{t_2-t_1}\\ & \leq C \abs{t_2-t_1}^{p/2}
\end{align*}
where the constants $C,C_1,C_2$ are independent of  $\epsilon$.\\
In the same way it's possible to prove~\eqref{eq:lemma_delta2}.
\end{proof}
We are now ready to prove theorem~\ref{th:averaging}.

\begin{proof}
We start by showing the following relation:
\begin{equation}  \label{eq2_th}
\lim_{\epsilon \to 0} \mathbb{E} \left [\sup_{t \in [0,T]} |X_t^{\epsilon}-\hat{X}_t^{\epsilon}|^2 \right] =0
\end{equation}
In order to do this we first prove:
\begin{equation}  \label{eq1_th}
\lim_{\epsilon \to 0} \sup_{x_0,y_0} \sup_{t \in [0,T]} \mathbb{E} \left [|Y_t^{\epsilon}-\hat{Y}_t^{\epsilon}|^2 \right] =0
\end{equation}
Let's fix $t \in [K \Delta, ((K+1) \Delta) \wedge T)$, then:
\begin{align*}
\mathbb{E}[|Y_t^{\epsilon}-\hat{Y}_t^{\epsilon}|^2] &
\leq 2 \mathbb{E} \left [ \epsilon^{-2} \abs{ \int_{K \Delta}^t B(s,X_s^\epsilon,Y_s^\epsilon)-B(K\Delta,X_{K \Delta}^{\epsilon},\hat{Y}_s^{\epsilon})ds}^2 \right] \\
& +2 \mathbb{E} \left [ \epsilon^{-1} \abs{ \int_{K \Delta}^t C(s,X_s^\epsilon,Y_s^\epsilon)-C(K\Delta, X_{K \Delta}^{\epsilon},\hat{Y}_s^{\epsilon})d \tilde{W}_s}^2  \right ] \\
& = 2(I_1^\epsilon(t) + I_2^\epsilon(t))
\end{align*}
First we consider the term $I_1^\epsilon(t)$. By Hölder inequality and hypothesis \ref{hp:lip} we have:
\begin{align*}
I_1^\epsilon(t) & \leq  \epsilon^{-2} \Delta \mathbb{E}  \Bigg[  \int_{K \Delta}^t \abs{B(s,X_s^\epsilon,Y_s^\epsilon)-B(K\Delta,X_{K \Delta}^{\epsilon},\hat{Y}_s^{\epsilon})}^2ds \Bigg] \\
& \leq 3 \epsilon^{-2} \Delta \mathbb{E} \Bigg [ \int_{K \Delta}^t \abs{ B(s,X_s^\epsilon,Y_s^\epsilon)-B(K\Delta,X_s^\epsilon,Y_s^{\epsilon})}^2 ds\Bigg]\\
& +3 \epsilon^{-2}\Delta  \mathbb{E}  \Bigg[ \int_{K \Delta}^t \abs{B(K\Delta,X_s^\epsilon,Y_s^{\epsilon})- B(K\Delta,X_{K \Delta}^\epsilon,Y_s^\epsilon)}^2 ds\Bigg]\\
&  +3 \epsilon^{-2}\Delta  \mathbb{E}  \Bigg[\int_{K \Delta}^t \abs{B(K\Delta,X_{K \Delta}^\epsilon,Y_s^\epsilon)-B(K\Delta,X_{K \Delta}^{\epsilon},\hat{Y}_s^{\epsilon})}^2 ds  \Bigg] \\
& \leq 3 L^2 \Delta \epsilon^{-2}  \int_{K \Delta}^t (s-K\Delta)^{2\gamma}+ \mathbb{E} \left [ \abs{ X_s^\epsilon - X_{K \Delta}^\epsilon }^2 \right ] + \mathbb{E} \left [ \abs{Y_s^\epsilon - \hat{Y}_s^{\epsilon} }^2 \right] ds
\end{align*}
An analogous estimate can be obtained for the term $I_2^\epsilon(t)$ using Ito's Isometry and hypothesis \ref{hp:lip}:
\begin{align*}
I_2^\epsilon(t) & =  \epsilon^{-1} \mathbb{E}  \Bigg[  \int_{K \Delta}^t \abs{C(s,X_s^\epsilon,Y_s^\epsilon)-C(K\Delta,X_{K \Delta}^{\epsilon},\hat{Y}_s^{\epsilon}) }^2ds \Bigg] \\
& \leq 3 L^2  \epsilon^{-1}  \int_{K \Delta}^t (s-K\Delta)^{2\gamma}+ \mathbb{E} \left [ \abs{ X_s^\epsilon - X_{K \Delta}^\epsilon }^2 \right ] + \mathbb{E} \left [ \abs{Y_s^\epsilon - \hat{Y}_s^{\epsilon} }^2 \right] ds
\end{align*}
Putting everything together we have:
\begin{equation*}
\mathbb{E}[|Y_t^{\epsilon}-\hat{Y}_t^{\epsilon}|^2] \leq  6 L^2(\Delta \epsilon^{-2} + \epsilon^{-1}) \int_{K \Delta}^t (s-K\Delta)^{2\gamma} + \mathbb{E} \left [ \abs{ X_s^\epsilon - X_{K \Delta}^\epsilon }^2 \right ] + \mathbb{E} \left [ \abs{Y_s^\epsilon - \hat{Y}_s^{\epsilon} }^2 \right] ds
\end{equation*}
By lemma \ref{lemma:lemma} and since $t \in [K \Delta, ((K+1) \Delta) \wedge T)$, we have:
\begin{align*}
\mathbb{E}[|Y_t^{\epsilon}-\hat{Y}_t^{\epsilon}|^2] &  \leq 6 L^2(\Delta \epsilon^{-2} + \epsilon^{-1}) \int_{K \Delta}^t \Delta^{2\gamma} +C \Delta+ \mathbb{E} \left [ \abs{Y_s^\epsilon - \hat{Y}_s^{\epsilon} }^2 \right] ds \\
& \leq 6 L^2 (\Delta \epsilon^{-2} + \epsilon^{-1})(\Delta^{1+2\gamma} +C \Delta^2)  + 6L^2 (\Delta \epsilon^{-2} + \epsilon^{-1}) \int_{K \Delta}^t \mathbb{E} \left [ \abs{Y_s^\epsilon - \hat{Y}_s^{\epsilon} }^2 \right] ds 
\end{align*}
By Gronwall's lemma it follows:
\begin{equation}\label{eq_estimate_gronwall}
\mathbb{E}[|Y_t^{\epsilon}-\hat{Y}_t^{\epsilon}|^2] \leq 6 L^2  (\Delta \epsilon^{-2} + \epsilon^{-1}) (\Delta^{1+2\gamma} +C \Delta^2) \exp{ (6L^2 (\Delta^2 \epsilon^{-2} + \Delta \epsilon^{-1}))} 
\end{equation}
which tends to zero as as $\epsilon \to 0$ since $\Delta = \epsilon (\log {\epsilon^{-1}})^{1/4}$. This implies~\eqref{eq1_th}.\\
We can now prove~\eqref{eq2_th}:
\begin{align*}
 \mathbb{E} \left [\sup_{t \in [0,T]} |X_t^{\epsilon}-\hat{X}_t^{\epsilon}|^2 \right] 
 & \leq 2 \mathbb{E} \left [ \sup_{t \in [0,T]} \abs { \sum_{K=0}^{\lceil t/ \Delta \rceil -1} \int_{K \Delta}^{[(K+1) \Delta]\wedge t}b(K\Delta,X_{K \Delta}^\epsilon,\hat{Y}_s^\epsilon)-b(s,X_{s}^{\epsilon},Y_s^{\epsilon} ) ds}^2  \right ] \\ 
 & + 2 \mathbb{E} \left [\sup_{t \in [0,T]}  \abs{ \sum_{K=0}^{\lceil t/ \Delta \rceil -1} \int_{K \Delta}^{[(K+1) \Delta]\wedge t}\sigma (K\Delta,X_{K \Delta}^\epsilon,\hat{Y}_s^\epsilon)-\sigma (s,X_{s}^{\epsilon},Y_s^{\epsilon}) dW_s}^2  \right ] \\
 & = 2(E_1^\epsilon+E_2^\epsilon)
\end{align*}
We consider the first term on the right hand side $E_1^\epsilon$.\\ By Holder's inequality and hypothesis \ref{hp:lip} we have:
\begin{align*}
E_1^\epsilon 
& \leq \Delta  \mathbb{E} \left [\sum_{K=0}^{\lceil T/ \Delta \rceil -1} \int_{K \Delta}^{[(K+1) \Delta]\wedge T} \abs { b(K\Delta,X_{K \Delta}^\epsilon,\hat{Y}_s^\epsilon)-b(s,X_{s}^{\epsilon},Y_s^{\epsilon} ) }^2 ds  \right ]\\
& \leq 3 \Delta \mathbb{E} \left [ \sum_{K=0}^{\lceil T/ \Delta \rceil -1} \int_{K \Delta}^{[(K+1) \Delta]\wedge T} \abs { b(K\Delta,X_{K \Delta}^\epsilon,\hat{Y}_s^\epsilon)-b(s,X_{K \Delta}^\epsilon,\hat{Y}_s^\epsilon) }^2 ds \right ]  \\
 & + 3 \Delta \mathbb{E} \left [\sum_{K=0}^{\lceil T/ \Delta \rceil -1} \int_{K \Delta}^{[(K+1) \Delta]\wedge T} \abs { b(s,X_{K \Delta}^\epsilon,\hat{Y}_s^\epsilon)-b(s,X_{s}^{\epsilon},\hat{Y}_s^\epsilon ) }^2 ds \right ] \\
  & + 3 \Delta \mathbb{E} \left [\sum_{K=0}^{\lceil T/ \Delta \rceil -1} \int_{K \Delta}^{[(K+1) \Delta]\wedge T} \abs { b(s,X_{s}^{\epsilon},\hat{Y}_s^\epsilon )-b(s,X_{s}^{\epsilon},Y_s^{\epsilon} ) }^2 ds \right ] \\
 & \leq 3\Delta L^2 \sum_{K=0}^{\lceil T/ \Delta \rceil -1} \int_{K \Delta}^{[(K+1) \Delta]\wedge T} (s-K\Delta)^{2\gamma}+  \mathbb{E} \left [ \abs{ X_s^\epsilon - X_{K \Delta}^\epsilon }^2 \right ]  +\mathbb{E} \left  [ \abs{Y_s^\epsilon - \hat{Y}_s^{\epsilon} }^2\right]  ds
\end{align*}
By \eqref{eq_estimate_gronwall} and lemma \ref{lemma:lemma} it implies $E_1^\epsilon \to 0$ as $\epsilon \to 0$.\\
In a similar way we treat the term $E_2^\epsilon$. By Doob's inequality and Lipschitzianity we obtain: 
\begin{align*}
E_2^\epsilon 
& \leq 4 \mathbb{E} \left [ \sum_{K=0}^{\lceil T/ \Delta \rceil -1} \int_{K \Delta}^{[(K+1) \Delta]\wedge T} \abs { \sigma(K\Delta,X_{K \Delta}^\epsilon,\hat{Y}_s^\epsilon)-\sigma(s,X_{s}^{\epsilon},Y_s^{\epsilon} )  }^2 ds \right ]  \\
 & \leq 12L^2 \sum_{K=0}^{\lceil T/ \Delta \rceil -1} \int_{K \Delta}^{[(K+1) \Delta]\wedge T} (s-K\Delta)^{2\gamma}+  \mathbb{E} \left [ \abs{ X_s^\epsilon - X_{K \Delta}^\epsilon }^2 \right ]  +\mathbb{E} \left  [ \abs{Y_s^\epsilon - \hat{Y}_s^{\epsilon} }^2\right]  ds
\end{align*}
from which it follows $E_2^\epsilon \to 0$ as $\epsilon \to 0$. \\
This implies \eqref{eq2_th}.

Now we want to apply \cite[chapter 2: problem 4.11]{karatzas} to the family of the laws induced by the processes $\{(\hat{X}^\epsilon,W,\tilde{W})\}_{\epsilon}$ to show that it is tight.\\
Until now we have considered processes on a time horizon $T$ fixed. To apply the result we need to extend them from $[0,T]$ to $[0,+ \infty)$, by choosing 
\begin{equation}\label{eq:extension}
    Z_t(\omega)=Z_T(\omega)
\end{equation}
for $\omega \in \Omega, t \geq T$ where $Z$ is any of the processes we have considered.\\
In this way we keep the continuity of the trajectories and we can see $Z$ as a random variable with values on $(\mathcal{C}[0,+\infty)^d,\mathcal{B}(\mathcal{C}[0,+\infty)^d))$.\\
We remind that $\mathcal{C}[0,+\infty)^d$ is a complete separable metric space with the following metric:
\begin{align*}
\rho (y_1,y_2)=\sum_{n =0}^{+\infty} \frac{1}{2^n} \max_{0 \leq t \leq n} (\abs{y_1(t)-y_2(t)}\wedge 1) && y_1, y_2 \in \mathcal{C}[0,+\infty)^d
\end{align*}
We show that the hypotheses of \cite[chapter 2: problem 4.11]{karatzas} hold:\\
(i) $\sup_\epsilon \mathbb{E} \left[\abs{(\hat{X}_0^\epsilon,W_0,\tilde{W}_0)} ^2\right] = (|x_0|^2,0,0) $\\
(ii) Since $\mathbb{E}\left [\abs {W_t-W_s}^4  \right]\leq C \abs{t-s}^{2}$ for every $0 \leq s,t \leq T$ and by \eqref{eq:lemma_delta1} we have:
$$\sup_{\epsilon} \mathbb{E}\left [\abs {(\hat{X}_t^\epsilon,W_t,\tilde{W}_t)-(\hat{X}_s^\epsilon,W_s,\tilde{W}_s)}^{4} \right ] \leq C_T \abs{t-s}^{2}$$
 for every $0 \leq s,t \leq T$.\\
This implies the family of the laws induced by $\{(\hat{X}^\epsilon,W,\tilde{W})\}_{\epsilon}$ is tight in $\mathcal{C}[0,+\infty)^{2d+l}$. Then by Prohorov's theorem it is weakly compact. This means that any sequence  $\{(\hat{X}^{\epsilon_n},W,\tilde{W})\}_{\epsilon_n}$ has a subsequence $\{(\hat{X}^{\epsilon_{n'}},W,\tilde{W})\}_{\epsilon_{n'}}$ converging weakly in $\mathcal{C}[0,+\infty)^{2d+l}$. In virtue of \cite[theorem 2.6]{billingsley} if we prove that the limit is unique then the whole family $\{(\hat{X}^{\epsilon},W,\tilde{W})\}_{\epsilon}$ converges weakly in $\mathcal{C}[0,+\infty)^{2d+l}$.\\
Hence we take an arbitrary subsequence $\{(\hat{X}^{\epsilon_{n'}},W,\tilde{W})\}_{\epsilon_{n'}}$ converging weakly in $\mathcal{C}[0,+\infty)^{2d+l}$ to a certain process $(\hat{X}^0,W,\tilde{W})$.\\
Then thanks to \eqref{eq2_th} and extension \eqref{eq:extension} we have that $\{(X^{\epsilon_{n'}},\hat{X}^{\epsilon_{n'}},W,\tilde{W} )\}_{\epsilon_{n'}}$ converges weakly in $\mathcal{C}[0,+\infty)^{3d+l}$ to $(\hat{X}^{0},\hat{X}^0,W,\tilde{W})$ and by the Skorokhod's theorem we can find a new probability space $(\Omega^*, \mathcal{F}^*, \mathbb{P}^*)$  and random variables $\{(x^{\epsilon_{n'}},\hat{x}^{\epsilon_{n'}},w^{\epsilon_{n'}},\tilde{w}^{\epsilon_{n'}}) \}_{\epsilon_{n'}}$ defined on it with values in $\mathcal{C}[0,+\infty)^{3d+l}$ that have the same law of $\{(X^{\epsilon_{n'}},\hat{X}^{\epsilon_{n'}},W,\tilde{W} )\}_{\epsilon_{n'}}$ and such that $\{(x^{\epsilon_{n'}},\hat{x}^{\epsilon_{n'}},w^{\epsilon_{n'}},\allowbreak \tilde{w}^{\epsilon_{n'}}) \}_{\epsilon_{n'}}$ converges a.s. to a random variable $(\hat{x}^{0},\hat{x}^{0},w,\tilde{w})$ with values in $\mathcal{C}[0,+\infty)^{3d+l}$ that has the same law of $(\hat{X}^{0},\hat{X}^{0},W,\tilde{W})$.\\
For semplicity we rename $\{(x^{\epsilon},\hat{x}^{\epsilon},w^{\epsilon},\tilde{w}^{\epsilon}) \}_{\epsilon}$ the subsequence $\{(x^{\epsilon_{n'}},\hat{x}^{\epsilon_{n'}},w^{\epsilon_{n'}},\tilde{w}^{\epsilon_{n'}}) \}_{\epsilon_{n'}}$ converging a.s.

To obtain the uniqueness of the limit mentioned above and hence obtain the thesis of the theorem we prove that the law of the process $\hat{x}^{0}$ coincides with the law of the unique solution of \eqref{eq:main} or equivalently that it is the unique solution to the corresponding martingale problem. In order to do this we want to apply \cite[chapter 5: proposition 4.6]{karatzas}.\\
Hence we consider the probability $P$ induced by $\hat{x}^{0}$ on $(\mathcal{C}[0,+\infty)^d,\mathcal{B}(\mathcal{C}[0,+\infty)^d))$ and the filtration $ \mathfrak{F}_t=\mathcal{G}_{t^+}$ where $\{\mathcal{G}_{t} \}$ is the augmentation under $P$ of the canonical filtration $\mathcal{B}(\mathcal{C}[0,t)^d)$.\\
In this setting we consider the martingale problem associated with $\{\overline{\mathcal{A}}_t\}_{t \geq 0}$ defined by
\begin{align*}
\overline{\mathcal{A}}_t f (x) = \sum_{i=1}^d \overline{b}_{i} (t,x) \frac{\partial f(x)}{\partial x_{i}} + \frac1{2} \sum_{i, k=1}^d \overline{a}_{ik}(t,x) \frac{\partial^{2} f(x)}{\partial x_{i} \, \partial x_{j}} && f \in \mathcal{C}^2(\mathbb{R}^d)
\end{align*}
In particular the probability $P$ is a solution to the martingale problem associated with $\{\overline{\mathcal{A}}_t\}_{t \geq 0}$ if the process $\{M_t^f\}_{t \geq 0}$ defined by
\begin{align} \label{eq:martingale_pb}
M_t^f(\eta)=f(\eta(t))-f(\eta(0))-\int_{0}^{t} \overline{\mathcal{A}}_s f(\eta(s)) ds, && 0 \leq t < + \infty, \eta \in \mathcal{C}[0,+\infty)^d
\end{align}
is a continuous $\mathfrak{F}_t$-martingale under $P$ for every $f \in \mathcal{C}^2(\mathbb{R}^d)$.\\
By \cite[chapter 5: proposition 4.6]{karatzas} in order for $P$ to be a solution to the martingale problem it is enough to prove that the process $\{M_t^f\}_{t \geq 0}$ is a continuous $\mathfrak{F}_t$-martingale under $P$ for the choices $f(x)=x_i$ and $f(x)=x_i x_j$ for every $0 \leq i,j \leq d$.

Therefore we set $\mathfrak{F}'_t=\sigma \{w_s, \tilde{w}_s, s \leq t \}$, we fix the component $i\leq d$ and we show that the process $\{m_t^{x_i}\}_{t \geq 0}$ defined by
$$ m_t^{x_i}=\hat{x}_{t}^{0,i}-\hat{x}_{0}^{0,i}-\int_{0}^{t} \overline{b}_i(s,\hat{x}_{s}^0) ds $$ 
is a continuous martingale with respect to $\mathfrak{F}'_t$, $t \geq 0$. Notice that $\hat{x}_{0}^{0}=x_0$ a.s.\\
On $\left(\Omega^*,\mathcal{F}^*,\mathcal{F}_t^{'},w_t,\tilde{w}_t,\mathbb{P}^* \right)$ we consider system \eqref{eq:system} and we notice that the slow component of the solution is indistinguishable from $x^\epsilon_t$ defined above. Hence we denote by $(x_t^\epsilon,Y_t^\epsilon)$ the solution of system \eqref{eq:system} on $\left(\Omega^*,\mathcal{F}^*,\mathcal{F}_t^{'},w_t,\tilde{w}_t,\mathbb{P}^* \right)$.\\
Analogously $\hat{x}_t^\epsilon$ is indistinguishable from the slow component of the discretized process on\\ $\left(\Omega^*,\mathcal{F}^*,\mathcal{F}_t^{'},w_t,\tilde{w}_t,\mathbb{P}^* \right)$ which we then denote by $(\hat{x}_t^\epsilon,\hat{Y}_t^\epsilon)$ and we have:
\begin{equation}\label{eq_processf}
\hat {x}_{t}^{\epsilon,i}-\hat {x}_{K \Delta}^{\epsilon,i}-\int_{K \Delta}^{t} b_i(K\Delta,x_{K \Delta}^{\epsilon},\hat{Y}_s^{\epsilon})ds-\int_{K \Delta}^{t} [\sigma(K\Delta,x_{K \Delta}^{\epsilon},\hat{Y}_s^{\epsilon})dw_s]_i=0
\end{equation}
a.s. in $\Omega^*$, $\forall t \in [K \Delta, (K+1) \Delta)$. \\
Then the process
$$ \hat {x}_{t}^{\epsilon,i}-\hat {x}_{0}^{\epsilon,i}- \sum_{K=0}^{\lceil t/ \Delta \rceil -1} \int_{K \Delta}^{[(K+1) \Delta]\wedge t} b_i(K\Delta,x_{K \Delta}^{\epsilon},\hat{Y}_s^{\epsilon})ds $$ 
is a continuous local $\mathfrak{F}'_t$-martingale, $t \geq 0$. We need to pass this property to the limit as $\epsilon \to 0$.\\
Now we take $t_1 \leq t_2$ and we show that:
\begin{equation} \label{eq3_proof}
\lim_{\epsilon \to 0} \mathbb{E} \left [ \abs{\mathbb{E} \left [ \sum_{K=\lceil t_1 / \Delta \rceil }^{\lceil t_2/ \Delta \rceil -1} \int_{K \Delta }^{(K+1) \Delta} b_i(K\Delta,x_{K \Delta}^{\epsilon},\hat{Y}_s^{\epsilon})-\overline{b}_i(K\Delta,x_{K \Delta}^{\epsilon})ds \bigg|\mathfrak{F}'_{t_1}  \right ]} \right ]=0
\end{equation}
We define $\tilde{Y}_s^{\epsilon}$ as the time-changed process \begin{align}\label{eq:time_change}
    \tilde{Y}_s^{\epsilon}=\hat{Y}_{\epsilon s}^{\epsilon} && s \in [K \Delta / \epsilon, (K+1) \Delta / \epsilon)
\end{align} 
and we see that the processes $\tilde{Y}_{s+K \Delta / \epsilon}^{\epsilon}$ and $y_s^{(K\Delta,x_{K \Delta}^{\epsilon},Y_{K \Delta}^{\epsilon})}$ are indistinguishable on $[0, \Delta / \epsilon)$. Here $y_s^{(K\Delta,x_{K \Delta}^{\epsilon},Y_{K \Delta}^{\epsilon})}$ satisfies \eqref{eq:frozen} with $x_{K \Delta}^{\epsilon},Y_{K \Delta}^{\epsilon}$ frozen and Brownian motion \begin{align*}
    \Tilde{w}_{s}^K=\frac{1}{\sqrt{\epsilon}}(\Tilde{w}_{\epsilon s+K \Delta}-\Tilde{w}_{ K \Delta}) && s \geq 0
\end{align*}
By hypothesis \ref{hp:est}, \ref{hp:expectation}, by the Markov property and by \eqref{diseq1:lemma} we obtain \eqref{eq3_proof}, in fact:
\begin{small}
\begin{align*}
& \mathbb{E} \left [ \abs{\mathbb{E} \left [\sum_{K=\lceil t_1 / \Delta \rceil }^{\lceil t_2/ \Delta \rceil -1} \int_{K \Delta }^{(K+1) \Delta} b_i(K\Delta,x_{K \Delta}^{\epsilon},\hat{Y}_s^{\epsilon})-\overline{b}_i(K\Delta,x_{K \Delta}^{\epsilon})ds \bigg| \mathfrak{F}'_{t_1}  \right ]} \right ] \\ 
& = \mathbb{E} \left [ \abs{\mathbb{E} \left [ \sum_{K=\lceil t_1 / \Delta \rceil }^{\lceil t_2/ \Delta \rceil -1} \mathbb{E} \left [ \epsilon \int_{0}^{ \Delta / \epsilon  } b_i(K\Delta,x_{K \Delta}^{\epsilon},\tilde{Y}_{s+K \Delta / \epsilon}^{\epsilon})-\overline{b}_i(K\Delta,x_{K \Delta}^{\epsilon})ds \bigg| \mathfrak{F}'_{K \Delta}  \right ] \bigg| \mathfrak{F}'_{t_1}  \right ]} \right ] \\
& = \mathbb{E} \left [ \abs{\mathbb{E} \left [  \sum_{K=\lceil t_1 / \Delta \rceil }^{\lceil t_2/ \Delta \rceil -1} \mathbb{E} \left [ \epsilon   \int_{0}^{\Delta / \epsilon  } b_i(K\Delta,x_{K \Delta}^{\epsilon},y_s^{(K\Delta,x_{K \Delta}^{\epsilon},Y_{K \Delta}^{\epsilon})})-\overline{b}_i(K\Delta,x_{K \Delta}^{\epsilon})ds  \bigg| \mathfrak{F}'_{K \Delta}  \right ] \bigg| \mathfrak{F}'_{t_1}  \right ]} \right ] \\
& = \mathbb{E} \left [ \abs{\mathbb{E} \left [  \sum_{K=\lceil t_1 / \Delta \rceil }^{\lceil t_2/ \Delta \rceil -1} \mathbb{E} \left [ \epsilon   \int_{0}^{ \Delta / \epsilon  } b_i(K\Delta,x_{K \Delta}^{\epsilon},y_s^{(K\Delta,x_{K \Delta}^{\epsilon},Y_{K \Delta}^{\epsilon})})-\overline{b}_i(K\Delta,x_{K \Delta}^{\epsilon})ds  \bigg| \left(x_{K \Delta}^{\epsilon},Y_{K \Delta}^{\epsilon}\right)  \right ] \bigg| \mathfrak{F}'_{t_1}  \right ]} \right ] \\
& \leq \mathbb{E} \left [ \sum_{K=\lceil t_1 / \Delta \rceil }^{\lceil t_2/ \Delta \rceil -1} \Delta \overline{K} (\Delta / \epsilon) (1+|x_{K \Delta}^{\epsilon}|^2+|Y_{K \Delta}^{\epsilon}|^2)   \right ]\\
 &
\leq C \overline{K}(\Delta / \epsilon) \xrightarrow[\epsilon \to 0]{\Delta / \epsilon \to +\infty} 0
\end{align*}
\end{small}
By dominated convergence for uniform integrable functions for the second term on the right side it follows that
\begin{align*}
& \lim_{\epsilon \to 0} \mathbb{E} \left [ \abs{\mathbb{E} \left [ \sum_{K=\lceil t_1 / \Delta \rceil }^{\lceil t_2/ \Delta \rceil -1} \int_{K \Delta }^{(K+1) \Delta } b_i(K\Delta,x_{K \Delta}^{\epsilon},\hat{Y}_s^{\epsilon})-\overline{b}_i(s,\hat {x}_{s}^{0})ds \bigg| \mathfrak{F}'_{t_1}  \right ]} \right ] \\ 
& \leq \lim_{\epsilon \to 0}  \mathbb{E} \left [ \abs{\mathbb{E} \left [ \sum_{K=\lceil t_1 / \Delta \rceil }^{\lceil t_2/ \Delta \rceil -1} \int_{K \Delta }^{(K+1) \Delta } b_i(K\Delta,x_{K \Delta}^{\epsilon},\hat{Y}_s^{\epsilon})-\overline{b}_i(K\Delta,x_{K \Delta}^{\epsilon})ds \bigg| \mathfrak{F}'_{t_1}  \right ]} \right ] \\ 
& +\lim_{\epsilon \to 0}   \int_{\lceil t_1 / \Delta \rceil \Delta }^{\lceil t_2/ \Delta \rceil\Delta } \mathbb{E} \left [ \abs{ \overline{b}_i(\lfloor s / \Delta \rfloor \Delta,x_{\lfloor s / \Delta \rfloor \Delta}^{\epsilon})-\overline{b}_i(s,\hat {x}_{s}^{0})}\right ] ds  =0
\end{align*}
From this and by dominated convergence for uniform integrable functions we have:
\begin{align*}
& \lim_{\epsilon \to 0} \mathbb{E} \Bigg [ \hat {x}_{\lceil t_2 / \Delta \rceil \Delta}^{\epsilon,i}-\hat {x}_{\lceil t_1 / \Delta \rceil \Delta}^{\epsilon,i} - \sum_{K=\lceil t_1 / \Delta \rceil }^{\lceil t_2/ \Delta \rceil -1}\int_{K \Delta }^{(K+1) \Delta} b_i(K\Delta,x_{K \Delta}^{\epsilon},\hat{Y}_s^{\epsilon})ds \bigg| \mathfrak{F}'_{t_1}  \Bigg ]\\ 
& = \mathbb{E} \Bigg [ \hat {x}_{t_2}^{0,i}-\hat {x}_{t_1}^{0,i}-  \int_{t_1}^{t_2} \overline{b}_i(s,\hat {x}_{s}^{0})ds \bigg| \mathfrak{F}'_{t_1}  \Bigg ]
\end{align*}
where the convergence as $\epsilon \to 0$ is in $\mathcal{L}^1(\Omega^*)$.\\
By \eqref{eq_processf} the term
inside the expectation on the left-hand-side
  is a sum of stochastic integrals.\\ 
 It implies $\forall t_1 \leq t_2 $
\begin{align*}
 \mathbb{E} \left [ \hat {x}_{t_2}^{0,i}-\hat {x}_{t_1}^{0,i}-  \int_{t_1}^{t_2} \overline{b}_i(s,\hat {x}_{s}^{0})ds \bigg| \mathfrak{F}'_{t_1}  \right ]=0 && \mathbb{P}^*a.s.
\end{align*}
This implies that $\{m_t^{x_i} \}_{t \geq 0}$ is a continuous $\mathfrak{F}'_t$-martingale.\\
Then we have that $\mathfrak{F}'_t \supseteq \mathcal{F}_t^{\hat{x}^0}=\sigma \{\hat{x}_{s}^0,s \leq t \}$: in fact the process $\hat{x}^0_t$ is $\mathfrak{F}'_t $-measurable since it is the a.s. limit of the family $\{\hat{x}^{\epsilon} \}_{\epsilon}$ which is $\mathfrak{F}'_t $-measurable.
This implies that $\{m_t^{x_i} \}_{t \geq 0}$ is a continuous $\mathcal{F}_t^{\hat{x}^0}$-martingale. Finally it follows that the process $\{M_t^{x_i} \}_{t \geq 0}$ defined by \eqref{eq:martingale_pb} with $f(x)=x_i$ is a continuous $\mathfrak{F}_t$-martingale under $P$ for every $i \leq d$.

Now we show that the following process is a continuous martingale with respect to $\mathfrak{F}'_t$, $t\geq 0$
\begin{align*}
m_t^{x_i x_j}=\hat {x}_{t}^{0,i} \hat {x}_{t}^{0,j} - \hat {x}_{0}^{0,i} \hat {x}_{0}^{0,j} -\int_{0}^{t} \overline{a}_{ij}(s,\hat{x}_{s}^0) + \hat {x}_{s}^{0,i} \overline{b}_j(s,\hat{x}_{s}^0) + \hat {x}_{s}^{0,j} \overline{b}_i(s,\hat{x}_{s}^0) ds && \forall i,j \leq d
\end{align*}
For $t \in [K \Delta, (K+1) \Delta)$ we have:
\begin{align*}
\hat {x}_{t}^{\epsilon,i} \hat {x}_{t}^{\epsilon,j}-\hat {x}_{0}^{\epsilon,i} \hat {x}_{0}^{\epsilon,j} &=   \int_{0}^{t} a_{ij}(K\Delta,x_{K \Delta}^\epsilon,\hat{Y}_s^\epsilon) + \hat {x}_{s}^{\epsilon,i} b_j(K\Delta,x_{K \Delta}^\epsilon,\hat{Y}_s^\epsilon) + \hat {x}_{s}^{\epsilon,j} b_i(K\Delta,x_{K \Delta}^\epsilon,\hat{Y}_s^\epsilon) ds \\
& + \int_{0}^{t}  \hat {x}_{s}^{\epsilon,i} [\sigma(K\Delta,x_{K \Delta}^\epsilon,\hat{Y}_s^\epsilon)  dw_s]_j  + \int_{0}^{t}  \hat {x}_{s}^{\epsilon,j} [\sigma(K\Delta,x_{K \Delta}^\epsilon,\hat{Y}_s^\epsilon)  dw_s]_i
\end{align*}
As before we have that the following process is a continuous $\mathfrak{F}'_t$-martingale, $t\geq 0$

\begin{align*}
\hat {x}_{t}^{\epsilon,i} \hat {x}_{t}^{\epsilon,j}-\hat {x}_{0}^{\epsilon,i} \hat {x}_{0}^{\epsilon,j} -  \sum_{K=0}^{\lceil t/ \Delta \rceil -1} \int_{K \Delta}^{[(K+1) \Delta] \wedge t} & a_{ij}(K\Delta,x_{K \Delta}^\epsilon,\hat{Y}_s^\epsilon) + \hat {x}_{s}^{\epsilon,i} b_j(K\Delta,x_{K \Delta}^\epsilon,\hat{Y}_s^\epsilon)\\
& + \hat {x}_{s}^{\epsilon,j} b_i(K\Delta,x_{K \Delta}^\epsilon,\hat{Y}_s^\epsilon) ds
\end{align*}
and we need to pass this property to the limit process as $\epsilon \to 0$.\\
With the same argument of before we are able to show that $\forall i,j \leq d$, $\forall 0 \leq t_1 \leq t_2 $

\begin{equation} \label{eq_aux_3}
\lim_{\epsilon \to 0} \mathbb{E} \left [ \abs{\mathbb{E} \left [ \sum_{K=\lceil t_1 / \Delta \rceil }^{\lceil t_2/ \Delta \rceil -1} \int_{K \Delta }^{(K+1) \Delta} a_{ij}(K\Delta,x_{K \Delta}^{\epsilon},\hat{Y}_s^{\epsilon})-\overline{a}_{ij}(s,\hat {x}_{s}^{0})ds \bigg| \mathfrak{F}'_{t_1}  \right ]} \right ] =0
\end{equation}

\begin{equation} \label{eq_aux_4}
 \lim_{\epsilon \to 0} \mathbb{E} \left [ \abs{ \mathbb{E} \left [\hat {x}_{t}^{\epsilon,i} \hat {x}_{t}^{\epsilon,j} - \hat {x}_{t}^{0,i} \hat {x}_{t}^{0,j} \bigg| \mathfrak{F}'_{t_1}  \right ] } \right ] = 0 
\end{equation}
Now we treat the last term remaining and we show that $\forall i,j \leq d$, $ 0 \leq t_1 \leq t_2$ 
\begin{equation}\label{eq_aux_2}
 \lim_{\epsilon \to 0} \mathbb{E} \left [ \abs{\mathbb{E} \left [\sum_{K=\lceil t_1 / \Delta \rceil }^{\lceil t_2/ \Delta \rceil -1} \int_{K \Delta}^{(K+1) \Delta } \hat {x}_{s}^{\epsilon,j} b_i(K\Delta,x_{K \Delta}^{\epsilon},\hat{Y}_s^{\epsilon})-\hat {x}_{s}^{0,j} \overline{b}_i(s,\hat{x}_{s}^{0})ds \bigg| \mathfrak{F}'_{t_1}  \right ]} \right ]  =0
\end{equation}
First we prove:
\begin{small}
\begin{equation}\label{eq_aux}
\lim_{\epsilon \to 0}  \mathbb{E} \left [ \abs{\mathbb{E} \left [ \sum_{K=\lceil t_1 / \Delta \rceil }^{\lceil t_2/ \Delta \rceil -1} \int_{K \Delta }^{(K+1) \Delta }  \hat {x}_{s}^{\epsilon,j} ( b_i(K\Delta,x_{K \Delta}^{\epsilon},\hat{Y}_s^{\epsilon})-\overline{b}_i(K\Delta,x_{K \Delta}^{\epsilon}))ds \bigg| \mathfrak{F}'_{t_1}  \right ]} \right ] =0
\end{equation}
\end{small}
In fact we have:
\begin{small}
\begin{align*}
& \mathbb{E} \left [ \abs{\mathbb{E} \left [\sum_{K=\lceil t_1 / \Delta \rceil }^{\lceil t_2/ \Delta \rceil -1}  \int_{K \Delta }^{(K+1) \Delta } \hat {x}_{s}^{\epsilon,j} (b_i(K\Delta,x_{K \Delta}^{\epsilon},\hat{Y}_s^{\epsilon})-\overline{b}_i(K\Delta,x_{K \Delta}^{\epsilon})) ds \bigg| \mathfrak{F}'_{t_1}  \right ]} \right ] \\ 
& = \mathbb{E} \left [ \abs{\mathbb{E} \left [ \sum_{K=\lceil t_1 / \Delta \rceil }^{\lceil t_2/ \Delta \rceil -1} \mathbb{E} \left [ \epsilon \int_{K \Delta / \epsilon}^{(K+1) \Delta / \epsilon  } \hat {x}_{s \epsilon}^{\epsilon,j} (b_i(K\Delta,x_{K \Delta}^{\epsilon},\tilde{Y}_s^{\epsilon})-\overline{b}_i(K\Delta,x_{K \Delta}^{\epsilon}))ds \bigg| \mathfrak{F}'_{K \Delta}  \right ] \bigg| \mathfrak{F}'_{t_1}  \right ]} \right ] \\
& \leq \mathbb{E} \left [ \abs{\mathbb{E} \left [  \sum_{K=\lceil t_1 / \Delta \rceil }^{\lceil t_2/ \Delta \rceil -1} \mathbb{E} \left [ \epsilon   \int_{K \Delta / \epsilon}^{(K+1) \Delta / \epsilon  } (\hat {x}_{s \epsilon}^{\epsilon,j}-\hat{x}_{K \Delta}^{\epsilon,j})(b_i(K\Delta,x_{K \Delta}^{\epsilon},\tilde{Y}_s^{\epsilon})-\overline{b}_i(K\Delta,x_{K \Delta}^{\epsilon}))ds \bigg| \mathfrak{F}'_{K \Delta}  \right ] \bigg| \mathfrak{F}'_{t_1}  \right ]} \right ] \\
& + \mathbb{E} \left [ \abs{\mathbb{E} \left [  \sum_{K=\lceil t_1 / \Delta \rceil }^{\lceil t_2/ \Delta \rceil -1} \mathbb{E} \left [ \epsilon   \int_{K \Delta / \epsilon}^{(K+1) \Delta / \epsilon  } \hat{x}_{K \Delta}^{\epsilon,j} (b_i(K\Delta,x_{K \Delta}^{\epsilon},\tilde{Y}_s^{\epsilon})-\overline{b}_i(K\Delta,x_{K \Delta}^{\epsilon}))ds  \bigg| \mathfrak{F}'_{K \Delta} \right ] \bigg| \mathfrak{F}'_{t_1}  \right ]} \right ]
\end{align*}
\end{small}
\begin{flushleft}
We show that the first term on the right-hand-side tends to zero.\\
By Hölder inequality, sublinearity, \eqref{diseq1:lemma} and \eqref{eq:lemma_delta1} we have:
\end{flushleft}
\begin{small}
\begin{align*}
& \mathbb{E} \left [ \abs{\mathbb{E} \left [  \sum_{K=\lceil t_1 / \Delta \rceil }^{\lceil t_2/ \Delta \rceil -1} \mathbb{E} \left [ \epsilon   \int_{K \Delta / \epsilon}^{(K+1) \Delta / \epsilon  } (\hat {x}_{s \epsilon}^{\epsilon,j}-\hat {x}_{K \Delta}^{\epsilon,j})(b_i(K\Delta,x_{K \Delta}^{\epsilon},\tilde{Y}_s^{\epsilon})-\overline{b}_i(K\Delta,x_{K \Delta}^{\epsilon}))ds \bigg| \mathfrak{F}'_{K \Delta}  \right ] \bigg| \mathfrak{F}'_{t_1}  \right ]} \right ] \\
& \leq \sum_{K=\lceil t_1 / \Delta \rceil }^{\lceil t_2/ \Delta \rceil -1} \epsilon   \int_{K \Delta / \epsilon}^{(K+1) \Delta / \epsilon  } \mathbb{E} \left [ |\hat {x}_{s \epsilon}^{\epsilon,j}-\hat{x}_{K \Delta}^{\epsilon,j}|^2 \right]^{1/2}   \mathbb{E} \left[  | b_i(K\Delta,x_{K \Delta}^{\epsilon},\tilde{Y}_s^{\epsilon})-\overline{b}_i(K\Delta,x_{K \Delta}^{\epsilon})|^2 \right ]^{1/2} ds   \\
& \leq \sum_{K=\lceil t_1 / \Delta \rceil }^{\lceil t_2/ \Delta \rceil -1} \epsilon \int_{K \Delta / \epsilon}^{(K+1) \Delta / \epsilon  }   C_1 \Delta^{1/2} ds   \xrightarrow[]{\epsilon \to 0} 0 
\end{align*}
\end{small}
where $C_1$ is independent of $\epsilon$.\\
Similarly to \eqref{eq3_proof} also the second term on the right-hand-side goes to zero:
\begin{align*}
& \mathbb{E} \left [ \abs{\mathbb{E} \left [  \sum_{K=\lceil t_1 / \Delta \rceil }^{\lceil t_2/ \Delta \rceil -1} \mathbb{E} \left [ \epsilon   \int_{K \Delta / \epsilon}^{(K+1) \Delta / \epsilon  } \hat{x}_{K \Delta}^{\epsilon,j} (b_i(K\Delta,x_{K \Delta}^{\epsilon},\tilde{Y}_s^{\epsilon})-\overline{b}_i(K\Delta,x_{K \Delta}^{\epsilon}))ds  \bigg| \mathfrak{F}'_{K \Delta} \right ] \bigg| \mathfrak{F}'_{t_1}  \right ]} \right ]\\
&  \leq  \sum_{K=\lceil t_1 / \Delta \rceil }^{\lceil t_2/ \Delta \rceil -1} \Delta \overline{K} (\Delta / \epsilon) (\mathbb{E} \left [ |\hat{x}_{K \Delta}^{\epsilon}|\right]+\mathbb{E} \left [ |\hat{x}_{K \Delta}^{\epsilon}|^2\right]^{1/2} \mathbb{E} \left [ |x_{K \Delta}^{\epsilon}|^4\right]^{1/2}+\mathbb{E} \left [ |\hat{x}_{K \Delta}^{\epsilon}||Y_{K \Delta}^{\epsilon}|^2\right]) \\
&\leq C_2 \overline{K}(\Delta / \epsilon) \xrightarrow[\epsilon \to 0]{\Delta / \epsilon \to +\infty} 0
\end{align*}
where $C_2$ is independent of $\epsilon$ and by hypothesis \ref{hp:expectation} we have
\begin{align*}
\mathbb{E} \left [ |\hat{x}_{K \Delta}^{\epsilon}||Y_{K \Delta}^{\epsilon}|^2\right] 
= \mathbb{E} \left [ |Y_{K \Delta}^{\epsilon}|^2 \mathbb{E} \left [  |\hat{x}_{K \Delta}^{\epsilon}| \big | Y_{K \Delta}^{\epsilon} \right]\right] \leq C_3 \mathbb{E} \left [ |Y_{K \Delta}^{\epsilon}|^2 \right] \leq C_4
\end{align*}
with $C_3,C_4$ independent of $\epsilon$. Notice that $\mathbb{E} \left [  |\hat{x}_{K \Delta}^{\epsilon}| \big | Y_{K \Delta}^{\epsilon} \right]\leq C_3$, in fact:
\begin{align*}
 \mathbb{E} \left [  |\hat{x}_{K \Delta}^{\epsilon}| \big | Y_{K \Delta}^{\epsilon} \right] &= \mathbb{E} \left [\mathbb{E} \left [  |\hat{x}_{K \Delta}^{\epsilon}| \big | Y_{s}^{\epsilon}, s \leq K \Delta \right]\big | Y_{K \Delta}^{\epsilon} \right] \\
 & \leq  \mathbb{E} \left [\mathbb{E} \left [  |\hat{x}_{K \Delta}^{\epsilon}|^2 \big | Y_{s}^{\epsilon}, s \leq K \Delta \right]\big | Y_{K \Delta}^{\epsilon} \right] ^{1/2}
\end{align*}
and $\mathbb{E} \left [  |\hat{x}_{K \Delta}^{\epsilon}|^2 \big | Y_{s}^{\epsilon}=y_s, s \leq K \Delta \right]\leq C_3$ because we can repeat the same proof of \eqref{diseq2:lemma} with any deterministic continuous function $y_s$. 
This implies \eqref{eq_aux}.\\
As before by \eqref{eq_aux} and dominated convergence we have \eqref{eq_aux_2}.\\
Conditions \eqref{eq_aux_3}, \eqref{eq_aux_4}, \eqref{eq_aux_2} imply that the process $\{m_t^{x_i x_j} \}_{t \geq 0}$ is a continuous martingale with respect to $\mathfrak{F}'_t$, $t \geq 0$ and then the process $\{M_t^{x_i x_j} \}_{t \geq 0}$ defined by \eqref{eq:martingale_pb} with $f(x)=x_i x_j$ is a continuous $\mathfrak{F}_t$-martingale under $P$ for every $i,j \leq d$.

At this point we can apply \cite[chapter 5: proposition 4.6]{karatzas} and say that there exists a d-dimensional $\hat{\mathfrak{F}}_t$-Brownian motion $\{ w^0_t \}_{t \geq 0}$ defined on an extension $(\hat{\Omega},\hat{\mathfrak{F}},\hat{\mathbb{P}})$ of $(\mathcal{C}[0,+\infty)^d,\allowbreak \mathcal{B}(\mathcal{C}[0,+\infty)^d), P)$ such that for $\eta \in \mathcal{C}[0,+\infty)^d$, we have that $(\hat{\Omega},\hat{\mathfrak{F}},\hat{\mathfrak{F}}_t,\hat{\mathbb{P}},X_t \coloneqq \eta(t),w^0_t) $ is a weak solution to \eqref{eq:main}. Moreover thanks to remark \ref{rem:uniqueness_averaged_eq} the solution of \eqref{eq:main} is unique. Then by \cite[ chapter 5]{karatzas} the probability P is  the unique solution to the martingale problem associated with $\{\overline{\mathcal{A}}_t\}_{t \geq 0}$ and fixed initial condition $P(\eta \in \mathcal{C}[0,+\infty)^d,\eta(0)=x_0)=1$. By \cite[theorem 2.6]{billingsley} it follows that the whole family $\{ \hat{X}^\epsilon \}_\epsilon$ converges weakly in $\mathcal{C}[0,+\infty)^d$ to the unique strong solution of \eqref{eq:main} and this implies the weak convergence in $\mathcal{C}[0,T]^d$. By \eqref{eq2_th} we have the thesis of the theorem.
\end{proof}

\section{Explicit conditions}
\label{sec:Explicit conditions}
In this section following the idea of remark \ref{rem:candidates} we provide some explicit conditions under which the implicit hypotheses \ref{hp:est}, \ref{hp:expectation} of the previous section are verified. In this way we can prove the validity of the averaging principle under only explicit conditions, in a similar way to what happens in \cite{Wei_liu} but with $\sigma$ dependent on $Y^\epsilon$. 

Let's consider system \eqref{eq:system} under hypotheses \ref{hp:lip}, \ref{hp:sub}. Moreover we assume the following non-degeneracy and dissipativity conditions:
\begin{hypothesis}[Non-degeneracy] \label{hp:pos} 
\begin{equation}
\inf_{0 \leq t \leq T} \inf_{x,y} \sigma (t,x,y)>0
\end{equation}
\end{hypothesis}
\begin{hypothesis}[Dissipativity] \label{hp:dis} 
There exists a constant $\beta >0 $ such that
\begin{equation*}
<B(t,x,y_2)-B(t,x,y_1),y_2-y_1>+|C(t,x,y_2)-C(t,x,y_1)|^2 \leq -\beta |y_2-y_1|^2
\end{equation*}
for every $ 0 \leq t \leq T$, $x \in \mathbb{R}^d$, $y_1,y_2 \in \mathbb{R}^l$.
\end{hypothesis}
Under these conditions we show that hypotheses \ref{hp:est}, \ref{hp:expectation} are satisfied. We start by proving the validity of the following standard lemmas:
\begin{lemma}  \label{lemma:aux_est2}
\begin{equation}
\mathbb{E}\left[\abs{y_s^{t,x,y_1}-y_s^{t,x,y_2}}^2\right] \leq e^{-2\beta s} |y_1-y_2|^2
\end{equation}
for every $0 \leq s$, $0\leq t \leq T$, $x \in \mathbb{R}^d$, $y_1,y_2 \in \mathbb{R}^l$.
\end{lemma}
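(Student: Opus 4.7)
The plan is to apply Itô's formula to the squared difference of the two frozen processes and exploit the dissipativity hypothesis \ref{hp:dis} to obtain an exponential contraction.

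First I would set $Z_s := y_s^{t,x,y_1} - y_s^{t,x,y_2}$, which satisfies the SDE
\begin{equation*}
dZ_s = \bigl[B(t,x,y_s^{t,x,y_1}) - B(t,x,y_s^{t,x,y_2})\bigr]\,ds + \bigl[C(t,x,y_s^{t,x,y_1}) - C(t,x,y_s^{t,x,y_2})\bigr]\,d\tilde{W}_s
\end{equation*}
with $Z_0 = y_1 - y_2$. Applying Itô's formula to $|Z_s|^2$ yields
\begin{equation*}
d|Z_s|^2 = \bigl[2\langle Z_s, B(t,x,y_s^{t,x,y_1}) - B(t,x,y_s^{t,x,y_2})\rangle + |C(t,x,y_s^{t,x,y_1}) - C(t,x,y_s^{t,x,y_2})|^2\bigr]\,ds + dM_s,
\end{equation*}
where $M_s$ denotes the stochastic integral part. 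By hypothesis \ref{hp:dis} the drift term is bounded above by $-2\beta |Z_s|^2\,ds$.

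Next I would take expectations. A standard localization argument (stopping at the exit times of balls of radius $R$ and then sending $R \to \infty$, using the sublinearity of $B,C$ from hypothesis \ref{hp:sub} to control moments of $y_s^{t,x,y_i}$) shows that the stochastic integral contributes zero in expectation. This gives
\begin{equation*}
\frac{d}{ds}\mathbb{E}\bigl[|Z_s|^2\bigr] \leq -2\beta\,\mathbb{E}\bigl[|Z_s|^2\bigr].
\end{equation*}
Gronwall's lemma (or direct integration of the associated ODE for $v(s) := \mathbb{E}[|Z_s|^2]$) then yields $\mathbb{E}[|Z_s|^2] \leq e^{-2\beta s}|y_1 - y_2|^2$, which is the claimed bound.

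The only mildly technical point is justifying that the local martingale $M_s$ has zero expectation; this is routine given the sublinear growth of $B$ and $C$, which guarantees finite second moments of $y_s^{t,x,y_i}$ uniformly on compact time intervals, so that $M$ is a genuine square-integrable martingale. Apart from this, the argument is a direct application of Itô's formula and the dissipativity estimate.
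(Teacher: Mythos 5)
Your proof is correct and follows essentially the same route as the paper: Itô's formula applied to the squared difference, the dissipativity hypothesis to bound the drift by $-2\beta$ times the squared difference, and a Gronwall/comparison argument to conclude. The paper omits the localization step justifying that the stochastic integral has zero expectation, which you spell out; otherwise the two arguments coincide.
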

\begin{proof}
By Ito's formula and hypothesis \ref{hp:dis} we have:
\begin{align*}
\frac{d}{ds} \mathbb{E}\left[\abs{y_s^{t,x,y_1}-y_s^{t,x,y_2}}^2\right] & =  \mathbb{E} \left [2 <B(t,x,y_s^{t,x,y_1})-B(t,x,y_s^{t,x,y_2}),y_s^{t,x,y_1}-y_s^{t,x,y_2}> \right ] \\ 
& +\mathbb{E} \left [|C(t,x,y_s^{t,x,y_1})-C(t,x,y_s^{t,x,y_2})|^2 \right ] \\
& \leq  -2\beta  \mathbb{E} \left [\abs{y_s^{t,x,y_1}-y_s^{t,x,y_2}}^2 \right] 
\end{align*}
Then by the comparison theorem we have the thesis.
\end{proof}

\begin{lemma} \label{lemma:aux_est}
There exists a constant $C>0$ such that
\begin{equation*}
\mathbb{E}\left[ \abs{y_s^{t_1,x_1,y}-y_s^{t_2,x_2,y}}^2 \right] \leq C \left(|t_1-t_2|^{2\gamma}+|x_1-x_2|^2 \right)
\end{equation*}
for every $0  \leq s$, $0\leq t_1,t_2 \leq T$, $x_1,x_2 \in \mathbb{R}^d$, $y \in \mathbb{R}^l$.
\end{lemma}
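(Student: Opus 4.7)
My plan is to prove the bound by setting $\delta_s := y_s^{t_1,x_1,y}-y_s^{t_2,x_2,y}$ (the processes share the same initial condition $y$ and are driven by the same Brownian motion $\tilde{W}$), applying Itô's formula to $|\delta_s|^2$, taking expectation, and deriving a linear differential inequality of the form
\begin{equation*}
\tdev{}{s}\mathbb{E}[|\delta_s|^2] \leq -\beta\,\mathbb{E}[|\delta_s|^2] + C\bigl(|t_1-t_2|^{2\gamma}+|x_1-x_2|^2\bigr),
\end{equation*}
which, combined with $\delta_0=0$ and the comparison theorem, yields the thesis uniformly in $s\geq 0$.

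The central step is controlling the drift-and-quadratic-variation combination
\begin{equation*}
2\langle \delta_s,\,\Delta B \rangle + |\Delta C|^2, \qquad \Delta B := B(t_1,x_1,y_s^{t_1,x_1,y})-B(t_2,x_2,y_s^{t_2,x_2,y}),
\end{equation*}
and analogously for $\Delta C$. I would split each difference into a ``frozen-parameter'' piece and a ``parameter-shift'' piece, writing $\Delta B = \Delta B_1 + \Delta B_2$ with
\begin{equation*}
\Delta B_1 = B(t_1,x_1,y_s^{t_1,x_1,y})-B(t_1,x_1,y_s^{t_2,x_2,y}), \qquad \Delta B_2 = B(t_1,x_1,y_s^{t_2,x_2,y})-B(t_2,x_2,y_s^{t_2,x_2,y}),
\end{equation*}
and similarly for $\Delta C$. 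Hypothesis \ref{hp:dis} then gives $2\langle \delta_s, \Delta B_1\rangle + |\Delta C_1|^2 \leq -2\beta|\delta_s|^2$, while hypothesis \ref{hp:lip} gives $|\Delta B_2|,|\Delta C_2|\leq L(|t_1-t_2|^\gamma+|x_1-x_2|)$ and $|\Delta C_1|\leq L|\delta_s|$.

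I then apply Young's inequality twice: first, $|\Delta C|^2\leq (1+\eta)|\Delta C_1|^2 + (1+\eta^{-1})|\Delta C_2|^2$, and second, $2\langle \delta_s, \Delta B_2\rangle \leq \mu|\delta_s|^2 + \mu^{-1}|\Delta B_2|^2$. Substituting the dissipativity inequality for the $\Delta C_1$ contribution and bounding $\eta|\Delta C_1|^2\leq \eta L^2|\delta_s|^2$, I obtain
\begin{equation*}
2\langle \delta_s,\Delta B\rangle + |\Delta C|^2 \leq (-2\beta + \eta L^2 + \mu)|\delta_s|^2 + C_{\eta,\mu}\bigl(|t_1-t_2|^{2\gamma}+|x_1-x_2|^2\bigr).
\end{equation*}
Choosing $\eta$ and $\mu$ small enough that $-2\beta+\eta L^2+\mu\leq -\beta$ produces the desired differential inequality.

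The main obstacle is the calibration of the Young-inequality parameters: the cross term $2\langle\Delta C_1,\Delta C_2\rangle$ forces an $\eta$-dilation on the dissipative quadratic term $|\Delta C_1|^2$, and one must verify that the resulting loss $\eta L^2|\delta_s|^2$ (together with the $\mu|\delta_s|^2$ from the drift splitting) can be absorbed by the dissipation rate $2\beta$. Since $\beta>0$ is fixed while $\eta,\mu$ are free, this is always achievable, and then a direct application of the comparison theorem for ODEs (as in Lemma \ref{lemma:aux_est2}) closes the argument with $C$ depending only on $\beta, L$.
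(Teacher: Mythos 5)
Your proposal is correct and follows essentially the same route as the paper: the same decomposition of $\Delta B$ and $\Delta C$ into a frozen-parameter piece controlled by the dissipativity hypothesis and a parameter-shift piece controlled by the Lipschitz/H\"older bound, followed by Young's inequality and the comparison theorem. The only cosmetic difference is in the calibration of constants: the paper absorbs the quadratic $C$-term by using $|\Delta C|^2\le 2|\Delta C_1|^2+2|\Delta C_2|^2$ together with the dissipativity inequality multiplied by two, whereas you use a $(1+\eta)$-dilation and absorb the excess $\eta|\Delta C_1|^2\le \eta L^2|\delta_s|^2$ into the dissipation rate; both choices close the argument.
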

\begin{proof}
By hypothesis \ref{hp:dis} we have:

\begin{align*}
\mathbb{E}\left[ \abs{y_s^{t_1,x_1,y}-y_s^{t_2,x_2,y}}^2 \right]  & = \int_0^s \mathbb{E} [2 <B(t_1,x_1,y_r^{t_1,x_1,y})-B(t_2,x_2,y_r^{t_2,x_2,y}),y_r^{t_1,x_1,y}-y_r^{t_2,x_2,y}>]dr \\ 
& +\int_0^s   \mathbb{E} [|C(t_1,x_1,y_r^{t,x_1,y})-C(t_2,x_2,y_r^{t,x_2,y})|^2]dr \\
& \leq \int_0^s 2\mathbb{E} [ <B(t_1,x_1,y_r^{t_1,x_1,y})-B(t_1,x_1,y_r^{t_2,x_2,y}),y_r^{t_1,x_1,y}-y_r^{t_2,x_2,y}>]dr \\ 
& +\int_0^s 2 \mathbb{E} [|C(t_1,x_1,y_r^{t_1,x_1,y})-C(t_1,x_1,y_r^{t_2,x_2,y})|^2]dr \\
& + \int_0^s 2\mathbb{E} [ <B(t_1,x_1,y_r^{t_2,x_2,y})-B(t_2,x_2,y_r^{t_2,x_2,y}),y_r^{t_1,x_1,y}-y_r^{t_2,x_2,y}>]dr \\ 
& +\int_0^s 2  \mathbb{E} [|C(t_1,x_1,y_r^{t_2,x_2,y})-C(t_2,x_2,y_r^{t_2,x_2,y})|^2]dr \\
& \leq   \int_0^s -2\beta \mathbb{E}\left[ \abs{y_r^{t_1,x_1,y}-y_r^{t_2,x_2,y}}^2 \right]  +2 L  \mathbb{E}\left[ \abs{y_r^{t_1,x_1,y}-y_r^{t_2,x_2,y}} \right] \\ &\times \left(|t_1-t_2|^\gamma+|x_1-x_2|\right) 
 + C_1 \left(|t_1-t_2|^{2\gamma}+|x_1-x_2|^2 \right)  dr
\end{align*}
for some $C_1>0$.\\
Then by Young's inequality we have:
\begin{align*}
\mathbb{E}\left[ \abs{y_s^{t_1,x_1,y}-y_s^{t_2,x_2,y}}^2 \right]  & \leq    \int_0^s -\beta \mathbb{E}\left[ \abs{y_r^{t_1,x_1,y}-y_r^{t_2,x_2,y}}^2 \right] +  C_2 \left(|t_1-t_2|^{2\gamma}+|x_1-x_2|^2 \right) dr  
\end{align*}
for some $C_2>0$.\\
By the comparison theorem this implies the thesis.
\end{proof}
\begin{lemma}\label{lemma:moment_frozen_y}
There exists a constant $C>0$ such that
\begin{equation}
  \mathbb{E}\left[\abs{y_s^{t,x,y}}^2 \right] \leq e^{- \beta s} |y|^2  + C (1+|x|^2)
\end{equation}
for every $0  \leq s$, $0\leq t \leq T$, $x \in \mathbb{R}^d$, $y \in \mathbb{R}^l$.
\end{lemma}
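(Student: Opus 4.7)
The plan is to apply Itô's formula to $|y_s^{t,x,y}|^2$ and then exploit the dissipativity hypothesis \ref{hp:dis} at the pair $(y_s^{t,x,y},0)$ to derive a linear differential inequality for $v(s) := \mathbb{E}[|y_s^{t,x,y}|^2]$ of the form $v'(s) \leq -\beta v(s) + C(1+|x|^2)$, from which the stated bound follows immediately by the comparison theorem (the same tool used in lemmas \ref{lemma:aux_est2} and \ref{lemma:aux_est}).

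More concretely, by Itô's formula applied to the frozen equation \eqref{eq:frozen} we get
\begin{equation*}
\frac{d}{ds} \mathbb{E}[|y_s^{t,x,y}|^2] = 2\,\mathbb{E}[\langle B(t,x,y_s^{t,x,y}), y_s^{t,x,y}\rangle] + \mathbb{E}[|C(t,x,y_s^{t,x,y})|^2].
\end{equation*}
The key trick will be to add and subtract $B(t,x,0)$ and $C(t,x,0)$ so that dissipativity with $y_1=0$, $y_2=y_s^{t,x,y}$ can be applied: this gives $2\langle B(t,x,y_s^{t,x,y})-B(t,x,0), y_s^{t,x,y}\rangle \leq -2\beta|y_s^{t,x,y}|^2 - 2|C(t,x,y_s^{t,x,y})-C(t,x,0)|^2$. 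Combining with the elementary inequality $|C(t,x,y_s^{t,x,y})|^2 \leq 2|C(t,x,y_s^{t,x,y})-C(t,x,0)|^2 + 2|C(t,x,0)|^2$, the quadratic terms in $C$ cancel up to the harmless contribution $2|C(t,x,0)|^2$.

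What remains is the linear term $2\langle B(t,x,0), y_s^{t,x,y}\rangle$, which I will absorb using Young's inequality as $\beta|y_s^{t,x,y}|^2 + \beta^{-1}|B(t,x,0)|^2$, thereby leaving an effective dissipation rate of $-\beta$. The sublinearity hypothesis \ref{hp:sub} bounds $|B(t,x,0)|^2$ and $|C(t,x,0)|^2$ by a constant multiple of $(1+|x|^2)$, and putting everything together yields
\begin{equation*}
\frac{d}{ds} \mathbb{E}[|y_s^{t,x,y}|^2] \leq -\beta\,\mathbb{E}[|y_s^{t,x,y}|^2] + C(1+|x|^2)
\end{equation*}
for some $C>0$ independent of $t,x,y,s$. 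The comparison theorem then gives $\mathbb{E}[|y_s^{t,x,y}|^2] \leq e^{-\beta s}|y|^2 + C\beta^{-1}(1+|x|^2)(1-e^{-\beta s})$, which implies the claim after relabeling the constant.

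No step is especially delicate; the main point to be careful about is the bookkeeping that lets dissipativity (which produces a $-2\beta$ coefficient) cover both the coercivity of the drift term and the diffusion term simultaneously, while still leaving enough slack after Young's inequality to retain a strictly negative rate $-\beta$ in front of $\mathbb{E}[|y_s^{t,x,y}|^2]$. Once that algebraic arrangement is fixed, the rest of the argument is standard.
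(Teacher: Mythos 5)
Your proposal is correct and follows essentially the same route as the paper's own proof: Itô's formula, adding and subtracting $B(t,x,0)$ and $C(t,x,0)$ so that the dissipativity hypothesis applied at the pair $(y_s^{t,x,y},0)$ absorbs both the drift and diffusion quadratic terms, Young's inequality to keep a residual rate $-\beta$, sublinearity to control the $(1+|x|^2)$ remainder, and the comparison theorem to conclude. No discrepancies worth noting.
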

\begin{proof}
By Ito's formula and hypothesis \ref{hp:dis} we obtain:
\begin{align*}
\frac{d}{ds}   \mathbb{E} \left[\abs{y_s^{t,x,y}}^2 \right]  & =  \mathbb{E} \left[ 2 <y_s^{t,x,y}, B(t,x, y_s^{t,x,y}) > + |C(t,x,y_s^{t,x,y})|^2\right]  \\
& \leq - 2 \beta \mathbb{E} \left[ |y_s^{t,x,y}|^2 \right]   + 2 \mathbb{E} \left[<y_s^{t,x,y},B(t,x,0)>+|C(t,x,0)|^2 \right]  
\end{align*}
By Young's inequality and the sublinearity of $B$ and $C$ we have:
\begin{align*}
\frac{d}{ds}  \mathbb{E} \left[\abs{y_s^{t,x,y}}^2 \right]
& \leq - \beta \mathbb{E} \left[ \abs{y_s^{t,x,y}}^2 \right]   + C_1   (1+|x|^2) 
\end{align*}
for some $C_1>0$.\\
Then by the comparison theorem we have the thesis.
\end{proof}
Now we are ready to prove that hypotheses \ref{hp:est}, \ref{hp:expectation} are satisfied:
\begin{lemma}\label{lemma:ergodicity}
Hypothesis \ref{hp:est} holds.
\end{lemma}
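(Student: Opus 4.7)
The plan is to follow the natural candidates of Remark \ref{rem:candidates}: for each fixed $(t,x)$ define $\overline{b}(t,x)$ and $\overline{a}(t,x)$ via the unique invariant measure $\mu^{t,x}$ of the frozen equation \eqref{eq:frozen}, and then use the exponential contraction provided by Lemma \ref{lemma:aux_est2} to establish the quantitative ergodic bounds and the continuity of the averaged coefficients.

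First I will construct $\mu^{t,x}$. Lemma \ref{lemma:aux_est2} shows that $y \mapsto y_s^{t,x,y}$ is a strict contraction in $\mathcal{L}^2$ with rate $e^{-\beta s}$, so the transition semigroup $\mathcal{P}^{t,x}_s$ is a contraction on the Wasserstein-$2$ space; together with the moment bound of Lemma \ref{lemma:moment_frozen_y} (which ensures tightness of $\{\mathrm{Law}(y_s^{t,x,y})\}_{s\geq 0}$ via Prokhorov, with second moment uniformly bounded by $C(1+|x|^2)$) this yields existence and uniqueness of an invariant probability $\mu^{t,x}$ satisfying
\begin{equation*}
\int_{\mathbb{R}^l} |z|^2 \, \mu^{t,x}(dz) \leq C(1+|x|^2).
\end{equation*}
I then set $\overline{b}(t,x):=\int b(t,x,z)\,\mu^{t,x}(dz)$, $\overline{a}(t,x):=\int a(t,x,z)\,\mu^{t,x}(dz)$, and $\overline{\sigma}(t,x):=\sqrt{2\overline{a}(t,x)}$.

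Next I verify \eqref{eq:erg1}. By invariance, $\overline{b}(t,x)=\int \mathbb{E}[b(t,x,y_s^{t,x,z})]\,\mu^{t,x}(dz)$, so by the Lipschitz continuity of $b$ in $y$ (Hypothesis \ref{hp:lip}), Lemma \ref{lemma:aux_est2}, Cauchy--Schwarz and the moment bound for $\mu^{t,x}$,
\begin{equation*}
\bigl|\mathbb{E}\, b(t,x,y_s^{t,x,y})-\overline{b}(t,x)\bigr|\leq L\int e^{-\beta s}\,|y-z|\,\mu^{t,x}(dz)\leq C\,e^{-\beta s}\bigl(1+|x|+|y|\bigr).
\end{equation*}
Averaging over $s\in[t_0,t_0+\tau]$ and dividing by $\tau$ produces a bound of the form $\overline{K}(\tau)(1+|x|^2+|y|^2)$ with $\overline{K}(\tau)=C/(\beta\tau)\to 0$. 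For \eqref{eq:erg2} the same argument works once one notices that Hypothesis \ref{hp:sub} gives $|a(t,x,y)|\leq \tfrac12 M^2(1+|x|)^2$ (so $a$ is bounded in $y$) and that $a(t,x,\cdot)$ is Lipschitz in $y$ with constant $ML(1+|x|)$, coming from $a=\sigma\sigma^T/2$ and the product rule for Lipschitz functions.

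Finally I show that $\overline{b}$ and $\overline{\sigma}$ are locally Lipschitz in $x$ uniformly in $t$. Writing
\begin{equation*}
\overline{b}(t_1,x_1)-\overline{b}(t_2,x_2)=\lim_{\tau\to\infty}\tau^{-1}\int_0^{\tau}\Bigl[\mathbb{E}\,b(t_1,x_1,y_s^{t_1,x_1,y})-\mathbb{E}\,b(t_2,x_2,y_s^{t_2,x_2,y})\Bigr]\,ds,
\end{equation*}
and estimating the integrand with Hypothesis \ref{hp:lip} together with Lemma \ref{lemma:aux_est}, one obtains an upper bound $C(|t_1-t_2|^\gamma+|x_1-x_2|)$ uniform in $s$ and in the initial condition $y$; passing to the limit yields the required regularity for $\overline{b}$, and the analogous argument gives it for $\overline{a}$. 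For $\overline{\sigma}=\sqrt{2\overline{a}}$, local Lipschitzianity then follows from Hypothesis \ref{hp:pos}, which guarantees that $\overline{a}$ is uniformly strictly positive definite so that the matrix square root is smooth on the relevant range. The main technical obstacle is controlling the dependence of the invariant measure on $(t,x)$: this is where Lemma \ref{lemma:aux_est} is crucial, as it translates the $(t,x)$-regularity of the coefficients of the frozen equation into $(t,x)$-regularity of $\mu^{t,x}$ (in a Wasserstein sense) and hence of the averaged coefficients.
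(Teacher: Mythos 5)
Your proposal is correct and follows essentially the same route as the paper: existence of $\mu^{t,x}$ from the moment bound of Lemma \ref{lemma:moment_frozen_y}, uniqueness and the rate $e^{-\beta s}$ from the contraction of Lemma \ref{lemma:aux_est2}, the same definitions of $\overline{b}$, $\overline{a}$, $\overline{\sigma}=\sqrt{2\overline{a}}$, the same locally-Lipschitz-in-$x$ bound for $a$, and the same use of Lemma \ref{lemma:aux_est} plus Hypothesis \ref{hp:pos} (invertibility of the square root) for the regularity of the averaged coefficients. The only cosmetic differences are that you phrase uniqueness as a Wasserstein contraction where the paper tests against Lipschitz functions, and you obtain the Lipschitz estimate for $\overline{b}$ from the time-average representation rather than by letting $s\to\infty$ in a finite-$s$ triangle inequality; both are equivalent.
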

\begin{proof}
By lemma \ref{lemma:moment_frozen_y} for fixed $0 \leq t \leq T, x \in \mathbb{R}^d$ the family of the laws induced by $\{y_s^{t,x,y} \}_{s \geq 0}$ is tight in $\mathbb{R}^l$. Then by the Krylov-Bogoliubov theorem 
the semigroup
\begin{align*}
\mathcal{P}^{t,x}_s\phi(y)=\mathbb{E}[\phi(y_s^{t,x,y})] && \phi \in B_B(\mathbb{R}^l),y \in \mathbb{R}^l, s \geq 0
\end{align*}
has an invariant measure $\mu^{t,x}$.\\
Moreover there exists a constant $C>0$ such that
\begin{equation}\label{eq:moment_mu}
    \int_{\mathbb{R}^l} |z|^2 \mu^{t,x}(dz) \leq C (1+|x|^2)
\end{equation}
for every $0 \leq t \leq T$, $x \in \mathbb{R}^d$.\\
In fact since $\mu^{t,x}$ is invariant measure and by lemma \ref{lemma:moment_frozen_y} we have for every $s >0$
\begin{align*}
    \int_{\mathbb{R}^l} |z|^2 \mu^{t,x}(dz) & = \int_{\mathbb{R}^l} \mathbb{E}\left[  \abs{y_s^{t,x,z}}^2\right] \mu^{t,x}(dz)\\
    & \leq e^{-\beta s}  \int_{\mathbb{R}^l} |z|^2 \mu^{t,x}(dz) + \overline{C}(1+|x|^2)
\end{align*}
By taking $s$ large enough we have \eqref{eq:moment_mu}.\\
Then by lemma \ref{lemma:aux_est2} and \eqref{eq:moment_mu} we have the exponential decay to equilibrium:\\
let $\phi \in B_B(\mathbb{R}^l)$ be Lipschitz with Lipschitz constant $L(\phi)$
\begin{align*}
\abs{\mathcal{P}^{t,x}_s\phi(y) - \int_{\mathbb{R}^l} \phi(z)\mu^{t,x}(dz)} & = \abs{\int_{\mathbb{R}^l} \mathbb{E}\left[ \phi(y_s^{t,x,y}) -  \phi(y_s^{t,x,z})\right ] \mu^{t,x}(dz)} \\
& \leq L(\phi) \int_{ \mathbb{R}^l} \mathbb{E}\left[ \abs{ y_s^{t,x,y}- y_s^{t,x,z} }\right ] \mu^{t,x}(dz) \\
& \leq L(\phi) e^{-2\beta s} \int_{ \mathbb{R}^l} \abs{ y-z }\mu^{t,x}(dz) \\
& \leq L(\phi) e^{-2\beta s} \left(|y|+\int_{ \mathbb{R}^l} \abs{z }\mu^{t,x}(dz) \right)\\
& \leq C_1 L(\phi) e^{-2\beta s} (1+|x|+|y|)
\end{align*}
for some $C_1>0$.\\
This implies also the uniqueness of the invariant measure $\mu^{t,x}$.\\
Then by choosing $\phi(y)=b(t,x,y_s^{t,x,y})$ for fixed $t \geq 0, x \in \mathbb{R}^d$ we have 
\begin{align}\label{eq:est_b_b_bar}
\abs{\mathbb{E}\left[ b(t,x,y_s^{t,x,y})\right ] - \int_{\mathbb{R}^l} b(t,x,z)\mu^{t,x}(dz)} \leq C_1 L e^{-2\beta s} (1+|x|+|y|)
\end{align}
Integrating with respect to $[t_0,t_0+\tau]$ for $t_0,\tau >0$ and dividing by $\tau$ it implies \eqref{eq:erg1} with 
\begin{equation}
\label{eq:definition_bbar}
\overline{b}(t,x)=\int_{\mathbb{R}^l} b(t,x,z)\mu^{t,x}(dz)
\end{equation}
Now we notice that $a(t,x,y)=\sigma(t,x,y)\sigma(t,x,y)^T/2$ satisfies the condition:
\begin{equation}\label{eq:regularity_a}
|a(t_2,x_2,y_2)-a(t_1,x_1,y_1)|\leq \Tilde{C} (|t_2-t_1|^\gamma+|x_2-x_1|+|y_2-y_1| )(1+|x_1|+|x_2|)
\end{equation}
for some $\Tilde{C}>0$, for every $0 \leq t_1,t_2 \leq T$, $x_1,x_2 \in \mathbb{R}^d$, $y_1,y_2 \in \mathbb{R}^l$.\\
Then similarly to the previous calculation we have:
\begin{align*}
\abs{\mathbb{E}\left[ a(t,x,y_s^{t,x,y})\right ] - \int_{\mathbb{R}^l} a(t,x,z)\mu^{t,x}(dz)} & = \abs{\int_{\mathbb{R}^l} \mathbb{E}\left[ a(t,x,y_s^{t,x,y}) -  a(t,x,y_s^{t,x,z})\right ] \mu^{t,x}(dz)} \\
& \leq \Tilde{C} (1+|x|) \int_{ \mathbb{R}^l} \mathbb{E}\left[ \abs{ y_s^{t,x,y}- y_s^{t,x,z} }\right ] \mu^{t,x}(dz) \\
& \leq \Tilde{C} e^{-2\beta s} (1+|x|) \int_{ \mathbb{R}^l} \abs{ y-z }\mu^{t,x}(dz) \\
& \leq \Tilde{C} e^{-2\beta s} (1+|x|) \left(|y|+\int_{ \mathbb{R}^l} \abs{z }\mu^{t,x}(dz) \right)\\
& \leq C_2 e^{-2\beta s} (1+|x|^2+|y|^2)
\end{align*}
for some $C_2>0$.\\
Now integrating with respect to $[t_0,t_0+\tau]$ for $t_0,\tau >0$ and dividing by $\tau$ it implies \eqref{eq:erg2} with 
\begin{equation*}
    \overline{a}(t,x)=\int_{\mathbb{R}^l} a(t,x,z)\mu^{t,x}(dz)
\end{equation*}
and 
\begin{equation}
\label{eq:definition_sigma_bar}
    \overline{\sigma}(t,x)=\sqrt{2\overline{a}(t,x)}
\end{equation}
chosen as the unique symmetric positive definite square root.\\
Finally we show that:
\begin{equation}\label{eq:lip_b_bar}
    \abs{\overline{b}(t_2,x_2)-\overline{b}(t_1,x_1)} \leq C \left(|t_2-t_1|^\gamma + |x_2-x_1|  \right) 
\end{equation}
\begin{equation}\label{eq:lip_sigma_bar}
    \abs{\overline{\sigma}(t_2,x_2)-\overline{\sigma}(t_1,x_1)} \leq C \left(|t_2-t_1|^\gamma + |x_2-x_1|  \right) \left(1+|x_1|+|x_2|\right)
\end{equation}
for some $C>0$. This concludes the proof.\\
By \eqref{eq:est_b_b_bar} and by lemma \ref{lemma:aux_est} we have:
\begin{small}
\begin{align*}
\abs{\overline{b}(t_2,x_2)-\overline{b}(t_1,x_1)} & \leq  \abs{\int_{\mathbb{R}^l} b(t_2,x_2,z)\mu^{t_2,x_2}(dz)- \mathbb{E}\left [b(t_2,x_2,y_s^{t_2,x_2,0}) \right]} \\
& +\abs{\int_{\mathbb{R}^l} b(t_1,x_1,z)\mu^{t_1,x_1}(dz)- \mathbb{E}\left [b(t_1,x_1,y_s^{t_1,x_1,0}) \right]} \\
& + \abs{\mathbb{E}\left [b(t_2,x_2,y_s^{t_2,x_2,0}) - b(t_1,x_1,y_s^{t_1,x_1,0}) \right]} \\
& \leq C_3 L e^{- 2\beta s} (1+|x_1|+|x_2|) + L \left(|t_2-t_1|^\gamma + |x_2-x_1|  \right) + L \mathbb{E}\left [\abs{ y_s^{t_2,x_2,0} - y_s^{t_1,x_1,0} } \right] \\
& \leq C_3 L e^{- 2\beta s} (1+|x_1|+|x_2|) + C_4 \left(|t_2-t_1|^\gamma + |x_2-x_1|  \right) 
\end{align*}
\end{small}
for some $C_3,C_4>0$.\\
Letting $s \to +\infty$ we have \eqref{eq:lip_b_bar}.\\
With a similar calculation by \eqref{eq:regularity_a} we can show that $\overline{a}$ satisfies:
\begin{equation*}
    \abs{\overline{a}(t_2,x_2)-\overline{a}(t_1,x_1)} \leq C_5 \left(|t_2-t_1|^\gamma + |x_2-x_1|  \right) \left(1+|x_1|+|x_2|\right)
\end{equation*}
for some $C_5>0$.\\
Since $\overline{\sigma}$ is symmetric we consider the equality
\begin{align*}
(\overline{\sigma}(t_2,x_2)-\overline{\sigma}(t_1,x_1)) &  = (\overline{\sigma}(t_2,x_2)^2-\overline{\sigma}(t_1,x_1)^2) (\overline{\sigma}(t_2,x_2)+\overline{\sigma}(t_1,x_1))^{-1}\\
& = 2(\overline{a}(t_2,x_2)-\overline{a}(t_1,x_1)) (\overline{\sigma}(t_2,x_2)+\overline{\sigma}(t_1,x_1))^{-1}
\end{align*}
As $\overline{\sigma}$ is invertible with bounded inverse by hypothesis \ref{hp:pos} we have \eqref{eq:lip_sigma_bar}.
\end{proof}
\begin{lemma}\label{lemma:hp_bound}
Hypothesis \ref{hp:expectation} holds.
\end{lemma}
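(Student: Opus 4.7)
The plan is to apply Itô's formula to $|Y_t^\epsilon|^2$, exploit the dissipativity hypothesis \ref{hp:dis} to extract a contracting term of order $\epsilon^{-1}$ which absorbs the fast drift, and then close the estimate with a linear Gronwall argument, using the a priori bound \eqref{diseq1:lemma} on $X^\epsilon$ to control the $x$-dependent forcing term.

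First, by Itô's formula,
\begin{equation*}
d|Y_t^\epsilon|^2 = \epsilon^{-1}\bigl[2<Y_t^\epsilon,B(t,X_t^\epsilon,Y_t^\epsilon)>+|C(t,X_t^\epsilon,Y_t^\epsilon)|^2\bigr]dt + 2\epsilon^{-1/2}<Y_t^\epsilon,C(t,X_t^\epsilon,Y_t^\epsilon)d\tilde{W}_t>
\end{equation*}
(where $|C|^2$ denotes the trace of $CC^T$). I would introduce the stopping times $\tau_n=\inf\{t>0:|Y_t^\epsilon|\geq n\}$ to suppress the local martingale part and take expectations on $[0,t\wedge\tau_n]$, later passing $n\to\infty$ via Fatou's lemma.

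The algebraic heart of the proof is to convert hypothesis \ref{hp:dis} into the pointwise estimate
\begin{equation*}
2<y,B(t,x,y)>+|C(t,x,y)|^2 \leq -\beta|y|^2 + C_1(1+|x|^2),
\end{equation*}
uniformly in $t\in[0,T]$. To derive this, I apply \ref{hp:dis} with $y_1=0$, $y_2=y$, which yields $<B(t,x,y)-B(t,x,0),y>+|C(t,x,y)-C(t,x,0)|^2\leq-\beta|y|^2$. Then using $|C(t,x,y)|^2\leq 2|C(t,x,y)-C(t,x,0)|^2+2|C(t,x,0)|^2$, Young's inequality on $<B(t,x,0),y>$, and the sublinearity of $B(t,x,0)$, $C(t,x,0)$ in $x$ given by hypothesis \ref{hp:sub}, the claimed bound follows with $C_1=C_1(\beta,M)$.

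Substituting this into the expected Itô expansion, invoking \eqref{diseq1:lemma} to obtain $\sup_{t\leq T}\mathbb{E}[|X_t^\epsilon|^2]\leq C(1+|x_0|^2)$ uniformly in $\epsilon$, and setting $v_n(t)=\mathbb{E}[|Y_{t\wedge\tau_n}^\epsilon|^2]$, I get
\begin{equation*}
v_n(t) \leq |y_0|^2 - \beta\epsilon^{-1}\int_0^t v_n(s)\,ds + \epsilon^{-1}C_2 t
\end{equation*}
for a constant $C_2$ independent of $\epsilon$ and $n$. Solving this linear differential inequality (multiplying by the integrating factor $e^{\beta t/\epsilon}$) gives
\begin{equation*}
v_n(t) \leq |y_0|^2 e^{-\beta t/\epsilon} + \frac{C_2}{\beta}(1-e^{-\beta t/\epsilon}) \leq |y_0|^2 + \frac{C_2}{\beta},
\end{equation*}
a bound uniform in $\epsilon\in(0,1]$, $t\in[0,T]$, and $n$. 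Letting $n\to\infty$ and taking suprema yields hypothesis \ref{hp:expectation}.

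The main obstacle is purely the algebraic manipulation producing the $-\beta|y|^2+C_1(1+|x|^2)$ bound: without this, the $\epsilon^{-1}$ prefactor in the drift would make the estimate blow up as $\epsilon\to 0$. The essential feature is that the contraction from dissipativity and the blow-up of the fast time scale appear at exactly the same order $\epsilon^{-1}$, so they compete and produce a finite limit. The localization step is routine but necessary since we have no a priori integrability of $Y_t^\epsilon$ before the estimate is established.
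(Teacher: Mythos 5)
Your proof is correct and follows essentially the same route as the paper: It\^o's formula applied to $|Y_t^\epsilon|^2$, hypothesis \ref{hp:dis} with $y_1=0$ converted into the pointwise bound $2\langle y,B(t,x,y)\rangle+|C(t,x,y)|^2\le-\beta|y|^2+C_1(1+|x|^2)$ via Young's inequality and the sublinearity of $B(t,x,0)$, $C(t,x,0)$, then \eqref{diseq1:lemma} and a comparison/Gronwall argument in which the contraction and the forcing both carry the factor $\epsilon^{-1}$ and therefore cancel. The one technical wrinkle is in your localization: since $\mathbb{E}\left[\mathbbm{1}_{\{s\le\tau_n\}}|Y_s^\epsilon|^2\right]=v_n(s)-n^2\,\mathbb{P}(\tau_n<s)$, the negative drift term does not pass to $-\beta\epsilon^{-1}\int_0^t v_n(s)\,ds$ without an uncontrolled positive remainder $\beta\epsilon^{-1}n^2\int_0^t\mathbb{P}(\tau_n<s)\,ds$; the standard remedy is to first establish $\mathbb{E}\left[\sup_{t\le T}|Y_t^\epsilon|^2\right]<\infty$ for each fixed $\epsilon$ from the linear growth of the coefficients (exactly as in the paper's first a priori lemma) and then run the differential inequality for $\mathbb{E}[|Y_t^\epsilon|^2]$ without stopping, which is what the paper implicitly does.
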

\begin{proof}
By Ito's formula and hypothesis $\ref{hp:dis}$ we have:
\begin{align*}
\frac{d}{ds}   \mathbb{E} \left[|Y_s^\epsilon|^2 \right]  & =  \mathbb{E} \left[ \frac{2}{\epsilon} <Y_s^\epsilon, B(s,X_s^\epsilon, Y_s^\epsilon) > + \frac{1}{\epsilon}  |C(s,X_s^\epsilon,Y_s^\epsilon)|^2\right]  \\
& \leq - \frac{2\beta}{\epsilon} \mathbb{E} \left[ |Y_s^\epsilon|^2 \right]   + \frac{2}{\epsilon} \mathbb{E} \left[<Y_s^\epsilon,B(s,X_s^\epsilon,0)>+|C(s,X_s^\epsilon,0|^2 \right]  
\end{align*}
Then by Young's inequality, the sublinearity of $B$ and $C$ and \eqref{diseq1:lemma} we obtain:
\begin{align*}
\frac{d}{ds}  \mathbb{E} \left[|Y_s^\epsilon|^2 \right]
& \leq - \frac{\beta}{\epsilon}  \mathbb{E} \left[ |Y_s^\epsilon|^2 \right]   + \frac{C_1}{\epsilon}  \mathbb{E} \left[ (1+|X_s^\epsilon|^2) \right] \\
& \leq - \frac{\beta}{\epsilon}  \mathbb{E} \left[ |Y_s^\epsilon|^2 \right]   + \frac{C_2}{\epsilon} 
\end{align*}
for suitable $C_1, C_2>0$ independent of $\epsilon$.\\
Then by the comparison theorem we have the thesis.
\end{proof}
Thus we have shown validity of the averaging principle under only explicit conditions:
\begin{theorem}[Averaging Principle II] \label{th:averaging2}
Assume that hypotheses \ref{hp:lip}, \ref{hp:sub}, \ref{hp:pos}, \ref{hp:dis} hold. Then  $ X^{\epsilon} \to \overline{X}$ weakly in  $\mathcal{C}[0,T]^d$ as $\epsilon \to 0$ where the process $\overline{X}$ is the unique strong solution of the averaged equation

\begin{align*} 
d\overline{X} _t = \overline{b}(t,\overline{X}_t)dt+\overline{\sigma}(t,\overline{X}_t)dW_t && \overline{X}_0=x_0, \forall t \in[0,T]
\end{align*}
and the coefficients $\overline{b}$, $\overline{\sigma}$ are defined by \eqref{eq:definition_bbar}, \eqref{eq:definition_sigma_bar} respectively.
\end {theorem}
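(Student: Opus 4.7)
The plan is to present this as a direct corollary: Theorem~\ref{th:averaging2} follows by combining Lemmas~\ref{lemma:ergodicity} and \ref{lemma:hp_bound} (which together say that the explicit Hypotheses~\ref{hp:pos} and \ref{hp:dis} of this section imply the implicit Hypotheses~\ref{hp:est} and \ref{hp:expectation}) with Theorem~\ref{th:averaging}. So the entire proof reduces to establishing those two lemmas with the candidates $\overline{b}$, $\overline{\sigma}$ defined by \eqref{eq:definition_bbar}, \eqref{eq:definition_sigma_bar}.

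First I would verify Hypothesis~\ref{hp:expectation}. Applying Itô's formula to $|Y_s^\epsilon|^2$ and using Hypothesis~\ref{hp:dis} with $y_1=0$ together with Young's inequality and the sublinear growth of $B$ and $C$, one gets a Gronwall-type differential inequality roughly of the form $\tfrac{d}{ds}\mathbb{E}|Y_s^\epsilon|^2 \leq -\tfrac{\beta}{\epsilon}\mathbb{E}|Y_s^\epsilon|^2 + \tfrac{C}{\epsilon}(1+\mathbb{E}|X_s^\epsilon|^2)$; the slow-component moment bound \eqref{diseq1:lemma} controls the right-hand side uniformly in $\epsilon$, and the comparison theorem delivers the $\mathcal{L}^2$-bound. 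For Hypothesis~\ref{hp:est} I would follow Remark~\ref{rem:candidates}: construct the invariant measure $\mu^{t,x}$ of the frozen semigroup via Krylov--Bogoliubov (the required tightness comes from Lemma~\ref{lemma:moment_frozen_y}, itself proved by Itô plus dissipativity) and then set $\overline{b}(t,x):=\int b(t,x,z)\mu^{t,x}(dz)$ and $\overline{a}(t,x):=\int a(t,x,z)\mu^{t,x}(dz)$. Uniqueness of $\mu^{t,x}$ and exponential mixing come from Lemma~\ref{lemma:aux_est2}, which is just an Itô-plus-dissipativity calculation on the difference of two frozen solutions. Applying the exponential decay estimate to the Lipschitz test functions $z\mapsto b(t,x,z)$ and $z\mapsto a(t,x,z)$ (for $a$, note that $\sigma$ sublinear in $x$ produces a Lipschitz constant in $z$ of order $1+|x|$, yielding the weight $1+|x|^2+|y|^2$ required in \eqref{eq:erg2}) and integrating in time gives \eqref{eq:erg1} and \eqref{eq:erg2}.

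The main obstacle I expect is the local Lipschitz continuity in $x$ (uniform in $t$) of $\overline{b}$ and $\overline{\sigma}$ that Hypothesis~\ref{hp:est} demands, because $\mu^{t,x}$ depends implicitly on $(t,x)$. To bypass direct analysis of $\mu^{t,x}$ I would first prove Lemma~\ref{lemma:aux_est}, a bound $\mathbb{E}|y_s^{t_1,x_1,y}-y_s^{t_2,x_2,y}|^2\leq C(|t_1-t_2|^{2\gamma}+|x_1-x_2|^2)$ uniform in $s$, again via Itô plus dissipativity with Young's inequality absorbing the cross terms. Then by writing $\overline{b}(t_i,x_i)=\lim_{s\to\infty}\mathbb{E}[b(t_i,x_i,y_s^{t_i,x_i,0})]$ via \eqref{eq:est_b_b_bar}, applying the Lipschitzianity of $b$, and letting $s\to\infty$, the Hölder/Lipschitz bound on $\overline{b}$ follows; an analogous weighted bound is obtained for $\overline{a}$. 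Finally, to pass this regularity from $\overline{a}$ to $\overline{\sigma}=\sqrt{2\overline{a}}$ I would use the symmetric matrix identity $\overline{\sigma}_2-\overline{\sigma}_1 = 2(\overline{a}_2-\overline{a}_1)(\overline{\sigma}_2+\overline{\sigma}_1)^{-1}$. This is the step where Hypothesis~\ref{hp:pos} becomes indispensable: non-degeneracy of $\sigma$ passes (through the ergodic limit) to non-degeneracy of $\overline{\sigma}$, guaranteeing that $\overline{\sigma}(t,x)$ is invertible with uniformly bounded inverse so that the quotient above is well-defined and controlled. With Hypotheses~\ref{hp:est} and \ref{hp:expectation} thereby verified under the explicit conditions, Theorem~\ref{th:averaging} applies directly and yields the weak convergence $X^\epsilon\to \overline{X}$ in $\mathcal{C}[0,T]^d$.
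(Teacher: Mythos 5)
Your proposal is correct and follows essentially the same route as the paper: the theorem is obtained as a direct consequence of Theorem~\ref{th:averaging} once Lemmas~\ref{lemma:ergodicity} and \ref{lemma:hp_bound} establish the implicit Hypotheses~\ref{hp:est} and \ref{hp:expectation}, and your sketches of those lemmas (It\^o plus dissipativity plus comparison, Krylov--Bogoliubov for $\mu^{t,x}$, exponential mixing from Lemma~\ref{lemma:aux_est2}, the weighted estimate for $a$, and the square-root identity $\overline{\sigma}_2-\overline{\sigma}_1=2(\overline{a}_2-\overline{a}_1)(\overline{\sigma}_2+\overline{\sigma}_1)^{-1}$ relying on Hypothesis~\ref{hp:pos}) match the paper's arguments step for step.
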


Since it will be useful in the financial application we prove that the averaging principle holds also when the fast equation is perturbed by a slower term of the form $\epsilon^{-\eta}D(t,X_t^{\epsilon},Y_t^{\epsilon})$ for $0\leq\eta<1$. Indeed let's consider the following perturbed system for $0 \leq t \leq T$
\begin{equation} \label{eq:perturbed_d}
\begin{cases}
dX_t^{\epsilon}=b(t,X_t^{\epsilon},Y_t^{\epsilon})dt+\sigma(t,X_t^{\epsilon},Y_t^{\epsilon})dW_t \\
X_0^{\epsilon}=x_0 \\
dY_t^{\epsilon}=[\epsilon^{-1} B(t,X_t^{\epsilon},Y_t^{\epsilon})+\epsilon^{-\eta}D(t,X_t^{\epsilon},Y_t^{\epsilon})]dt+\epsilon^{-1/2}C(t,X_t^{\epsilon},Y_t^{\epsilon})d\tilde{W}_t \\
Y_0^{\epsilon}=y_0
\end{cases}
\end{equation}
In this case the discretized process for $t \in [K \Delta, ((K+1) \Delta) \wedge T)$ becomes:
\begin{equation*}
\begin{cases}
d\hat {X}_t^{\epsilon}=b(K \Delta, X_{K \Delta}^{\epsilon},\hat{Y}_t^{\epsilon})dt+\sigma(K \Delta,X_{K \Delta}^{\epsilon},\hat{Y}_t^{\epsilon})dW_t  \\
\hat {X}_{0}^{\epsilon}=x_0 \\
d\hat{Y}_t^{\epsilon}=\left[\epsilon^{-1} B(K \Delta,X_{K \Delta}^{\epsilon},\hat{Y}_t^{\epsilon})+\epsilon^{-\eta}D(K \Delta, X_{K \Delta}^{\epsilon},\hat{Y}_t^{\epsilon})\right]dt+\epsilon^{-1/2}C(K \Delta,X_{K \Delta}^{\epsilon},\hat{Y}_t^{\epsilon})d\tilde{W}_t \\
\hat {Y}_{K \Delta}^{\epsilon}=Y_{K \Delta}^{\epsilon} 
\end{cases}
\end{equation*}
Analogously we have the following proposition:
\begin{proposition}\label{prop:extension2}
 The conclusions of theorem \ref{th:averaging2} hold also when we consider system \eqref{eq:perturbed_d} under hypotheses \ref{hp:lip}, \ref{hp:sub}, \ref{hp:pos}, \ref{hp:dis} and
\begin{align*} 
\abs{D(t,x_2,y_2)-D(s,x_1,y_1)} & \leq L (|t-s|^\gamma+|x_2-x_1|+|y_2-y_1|)
\end{align*}
\begin{align*} 
\abs{D(t,x,y)} & \leq M (1+\abs{x}+\abs{y})
\end{align*}
for every $0 \leq t,s \leq T$, $x,x_1,x_2 \in \mathbb{R}^d$, $y,y_1,y_2 \in \mathbb{R}^l$.
\end{proposition}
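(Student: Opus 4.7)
The plan is to re-execute the proof of Theorem \ref{th:averaging} on the perturbed system \eqref{eq:perturbed_d}, verifying that the extra slow drift $\epsilon^{-\eta}D$ is asymptotically negligible compared with the dominant dissipative term $\epsilon^{-1}B$ whenever $\eta<1$. Hypothesis \ref{hp:est} needs no modification, since the frozen equation \eqref{eq:frozen} does not involve $D$: Lemma \ref{lemma:ergodicity} applies verbatim and produces the same averaged coefficients $\overline{b},\overline{\sigma}$ of \eqref{eq:definition_bbar}--\eqref{eq:definition_sigma_bar}.

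For the $\mathcal{L}^2$-bound of hypothesis \ref{hp:expectation} I apply Ito's formula to $|Y_s^\epsilon|^2$ as in Lemma \ref{lemma:hp_bound}; the $B$ and $C$ terms produce the usual $-\frac{2\beta}{\epsilon}\mathbb{E}|Y_s^\epsilon|^2+\frac{C}{\epsilon}(1+\mathbb{E}|X_s^\epsilon|^2)$, while by Young's inequality and the sublinearity of $D$ the new contribution $\frac{2}{\epsilon^\eta}\mathbb{E}\langle Y_s^\epsilon,D\rangle$ is bounded by $\frac{4M}{\epsilon^\eta}\mathbb{E}|Y_s^\epsilon|^2+\frac{M}{\epsilon^\eta}(1+\mathbb{E}|X_s^\epsilon|^2)$. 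Since $\eta<1$, the residual dissipative coefficient $\frac{2\beta}{\epsilon}-\frac{4M}{\epsilon^\eta}=\frac{1}{\epsilon}(2\beta-4M\epsilon^{1-\eta})$ is positive for $\epsilon$ small enough, while for $\epsilon$ bounded away from $0$ the linear-growth argument gives boundedness directly; combining with \eqref{diseq1:lemma} and Gronwall yields hypothesis \ref{hp:expectation}. Analogously, in the estimate of $\mathbb{E}|Y_t^\epsilon-\hat{Y}_t^\epsilon|^2$ the Lipschitzianity of $D$ introduces an additional term of order $\Delta\epsilon^{-2\eta}$; since $\eta<1$ gives $\epsilon^{-2\eta}\leq\epsilon^{-2}$ on $(0,1]$, this is dominated by the pre-existing $\Delta\epsilon^{-2}$ contribution, so \eqref{eq_estimate_gronwall} still holds with the same choice $\Delta=\epsilon(\log\epsilon^{-1})^{1/4}$.

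The main obstacle is the identification of the martingale problem: under \eqref{eq:perturbed_d} the time change $s\mapsto\epsilon s$ converts $\hat{Y}^\epsilon$ not into the frozen process $y_s^{K\Delta,x,y}$ but into the perturbed SDE $dz_s=B(K\Delta,x,z_s)\,ds+\epsilon^{1-\eta}D(K\Delta,x,z_s)\,ds+C(K\Delta,x,z_s)\,d\tilde{w}_s^K$. My strategy is a pathwise coupling: applying Ito's formula to $|\delta_s|^2$ with $\delta_s=z_s-y_s^{K\Delta,x,y}$ driven by the same Brownian motion, the dissipativity hypothesis \ref{hp:dis} together with Young's inequality and the analog of Lemma \ref{lemma:moment_frozen_y} for $z_s$ gives $\frac{d}{ds}\mathbb{E}|\delta_s|^2\leq -\beta\mathbb{E}|\delta_s|^2+C\epsilon^{2(1-\eta)}(1+|x|^2+|y|^2)$, whence $\sup_s\mathbb{E}|\delta_s|^2=O(\epsilon^{2(1-\eta)})\to 0$. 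Substituting this coupling into \eqref{eq3_proof} and its counterparts \eqref{eq_aux_3} and \eqref{eq_aux}, the decay $\overline{K}(\Delta/\epsilon)$ is replaced by $\overline{K}(\Delta/\epsilon)+O(\epsilon^{1-\eta})$, which still vanishes in the limit. The remainder of the argument---tightness via Lemma \ref{lemma:lemma}, Skorokhod representation, and the final identification of $P$ as the unique solution of the martingale problem associated with $\{\overline{\mathcal{A}}_t\}_{t\geq 0}$---then runs unchanged, yielding the weak convergence $X^\epsilon\to\overline{X}$ in $\mathcal{C}[0,T]^d$.
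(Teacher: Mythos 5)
Your proposal is correct and follows essentially the same route as the paper: verify that the frozen equation (and hence the ergodic hypothesis) is unaffected, absorb the $\epsilon^{-\eta}D$ term into the dominant $\epsilon^{-1}$ dissipation for the $\mathcal{L}^2$-bound, and then couple the time-changed discretized fast process with the frozen process via the dissipativity to get an $O(\epsilon^{1-\eta})$ error that is inserted into the martingale-problem limits \eqref{eq3_proof}, \eqref{eq_aux_3}, \eqref{eq_aux} by a triangle-inequality splitting. This is exactly the paper's three-step argument, with your coupling estimate even slightly sharper ($O(\epsilon^{2(1-\eta)})$ for the squared difference versus the paper's $O(\epsilon^{1-\eta})$), both of which suffice.
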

Let's observe that in this context it doesn't make sense anymore to talk about "frozen equation" for \eqref{eq:frozen}. Anyway, since we are going to prove \eqref{eq:assumption_tilde_frozen}, we continue to refer to it in this way.

\begin{proof}
\textit{Step 1}: we prove that the hypotheses of theorem \ref{th:averaging} are verified. In fact lemmas \ref{lemma:aux_est2}, \ref{lemma:aux_est}, \ref{lemma:moment_frozen_y} and \ref{lemma:ergodicity} hold with the same proof since the frozen equation is the same as before. For what concerns lemma \ref{lemma:hp_bound} consider the inequality:
\begin{align*}
\frac{d}{ds}   \mathbb{E} \left[|Y_s^\epsilon|^2 \right] 
& \leq - \frac{2\beta}{\epsilon} \mathbb{E} \left[ |Y_s^\epsilon|^2 \right]   + \frac{2}{\epsilon} \mathbb{E} \left[<Y_s^\epsilon,B(s,X_s^\epsilon,0)>+|C(s,X_s^\epsilon,0|^2 \right]  \\
& +\frac{2}{\epsilon^\eta}\mathbb{E} \left [ <Y_s^\epsilon, D(s,X_s^\epsilon, Y_s^\epsilon) > \right] 
\end{align*}
Since $\eta <1$ the dominating terms are the ones with coefficient of order $\frac{1}{\epsilon}$. This implies the thesis of the lemma by a similar argument as before.

\textit{Step 2}: we prove the following estimate:
\begin{equation}\label{eq:assumption_tilde_frozen}
    \sup_{0 \leq s < \Delta/\epsilon}  \mathbb{E} \left [  \abs{\tilde{Y}_{s + K \Delta/\epsilon}^{\epsilon}-y_s^{(K\Delta,x_{K \Delta}^{\epsilon},Y_{K \Delta}^{\epsilon})}}^2 \right ] \leq C \epsilon^{\nu}
\end{equation}
for some $\nu, C>0$, for every $\epsilon>0$, $K=0...\lfloor T \rfloor$.\\
Here $y_s^{(K\Delta,x_{K \Delta}^{\epsilon},Y_{K \Delta}^{\epsilon})}$ satisfies \eqref{eq:frozen} with $x_{K \Delta}^{\epsilon},Y_{K \Delta}^{\epsilon}$ frozen  and Brownian motion 
\begin{align*}
    \Tilde{W}_{s}^K=\frac{1}{\sqrt{\epsilon}}(\Tilde{W}_{\epsilon s + K \Delta}-\Tilde{W}_{ K \Delta  }) && s \geq 0
\end{align*}
and $\tilde{Y}_s^{\epsilon}$ as usual is defined by
\begin{align}
    \tilde{Y}_s^{\epsilon}=\hat{Y}_{\epsilon s}^{\epsilon} && s \in [K \Delta / \epsilon, (K+1) \Delta / \epsilon)
\end{align} 
First we notice that now $\tilde{Y}_s^{\epsilon}$ satisfies:
\begin{align*}
    d\tilde{Y}_{s + K \Delta/\epsilon}^{\epsilon}=\left[ B(K \Delta,X_{K \Delta}^{\epsilon},\tilde{Y}_s^{\epsilon})+\epsilon^{1-\eta}D\left(K \Delta, X_{K \Delta}^{\epsilon},\tilde{Y}_s^{\epsilon})\right]dt+C(K \Delta,X_{K \Delta}^{\epsilon},\tilde{Y}_s^{\epsilon}\right)d\tilde{W}^K_s && \tilde{Y}_{K \Delta/\epsilon}^{\epsilon}=Y_{K \Delta}^\epsilon
\end{align*} 
for every $s \in [0, \Delta / \epsilon)$, for every $K$.\\
Then for $s\in [0,\Delta /\epsilon)$ and $K$, fixed by the dissipativity we have:
\begin{small}
\begin{align*}
& \frac{d}{ds} \mathbb{E}\left[\abs{\tilde{Y}_{s+K\Delta/\epsilon}^\epsilon-y_s^{K\Delta,X_{K \Delta}^\epsilon,Y_{K \Delta}^\epsilon}}^2\right] \\
& = 2 \mathbb{E}\left[ <B(K\Delta,X_{K\Delta}^\epsilon,\tilde{Y}^\epsilon_{s+K\Delta / \epsilon} )-B(K\Delta,X_{K \Delta}^\epsilon,y_s^{K\Delta,X_{K \Delta}^\epsilon,Y_{K \Delta}^\epsilon}),\tilde{Y}_{s+K\Delta /\epsilon}^\epsilon-y_s^{K\Delta,X_{K \Delta}^\epsilon,Y_{K \Delta}^\epsilon} >\right] \\
&+ 2 \epsilon^{1-\eta} \mathbb{E}\left[ <D(K\Delta,X_{K\Delta}^\epsilon,\tilde{Y}^\epsilon_{s+K\Delta / \epsilon} ),\tilde{Y}_{s+K\Delta /\epsilon}^\epsilon-y_s^{K\Delta,X_{K \Delta}^\epsilon,Y_{K \Delta}^\epsilon} >\right]  \\
& + \mathbb{E}\left[ \abs{C(K\Delta,X_{K\Delta}^\epsilon,\tilde{Y}^\epsilon_{s+K\Delta / \epsilon} )-C(K\Delta,X_{K \Delta}^\epsilon,y_s^{K\Delta,X_{K \Delta}^\epsilon,Y_{K \Delta}^\epsilon})}^2\right]  \\
& \leq -2 \beta \mathbb{E}\left[\abs{\tilde{Y}_{s+K\Delta/\epsilon}^\epsilon-y_s^{K\Delta,X_{K \Delta}^\epsilon,Y_{K \Delta}^\epsilon}}^2\right] 
+2 \epsilon^{1-\eta}\mathbb{E}\left[\abs{D(K\Delta,X_{K\Delta}^\epsilon,\tilde{Y}^\epsilon_{s+K\Delta / \epsilon} )}\abs{\tilde{Y}_{s+K\Delta /\epsilon}^\epsilon-y_s^{K\Delta,X_{K \Delta}^\epsilon,Y_{K \Delta}^\epsilon} }\right]  \\
& \leq - \beta \mathbb{E}\left[\abs{\tilde{Y}_{s+K\Delta/\epsilon}^\epsilon-y_s^{K\Delta,X_{K \Delta}^\epsilon,Y_{K \Delta}^\epsilon}}^2\right] + \frac{\epsilon^{1-\eta}}{\beta} \mathbb{E}\left[\abs{D(K\Delta,X_{K\Delta}^\epsilon,\tilde{Y}^\epsilon_{s+K\Delta / \epsilon} )}^2 \right] \\
& \leq - \beta \mathbb{E}\left[\abs{\tilde{Y}_{s+K\Delta/\epsilon}^\epsilon-y_s^{K\Delta,X_{K \Delta}^\epsilon,Y_{K \Delta}^\epsilon}}^2\right] + \epsilon^{1-\eta} C_1
\end{align*}
\end{small}
where $C_1$ is independent of $\epsilon$ and $K$.\\
Here we have used the fact that $\mathbb{E} [|\tilde{Y}^\epsilon_{s+K\Delta / \epsilon}|^2]\leq \overline{C}$ for $\overline{C}$ independent of $\epsilon$ and $K$.\\
Then by the comparison theorem we obtain:
\begin{equation*}
    \mathbb{E}\left[\abs{\tilde{Y}_{s+K\Delta/\epsilon}^\epsilon-y_s^{K\Delta,X_{K \Delta}^\epsilon,Y_{K \Delta}^\epsilon}}^2\right] \leq C_2 \epsilon^{1-\eta}
\end{equation*}
for every $s\in [0,\Delta /\epsilon)$ and $C_2$ independent of $\epsilon$ and $K$.\\
This implies \eqref{eq:assumption_tilde_frozen}.

\textit{Step 3}: we prove that the conclusion of theorem \ref{th:averaging} hold.
The proof is the same of the one for theorem \ref{th:averaging}, what changes is the following:\\
limits \eqref{eq1_th} and \eqref{eq2_th} follow by a similar calculation as before.\\
Then \eqref{eq3_proof} follows by the splitting
\begin{align*}
   & \mathbb{E} \left [ \abs{\mathbb{E} \left [ \sum_{K=\lceil t_1 / \Delta \rceil }^{\lceil t_2/ \Delta \rceil -1} \mathbb{E} \left [ \epsilon \int_{K \Delta / \epsilon}^{(K+1) \Delta / \epsilon  } b_i(K\Delta,x_{K \Delta}^{\epsilon},\tilde{Y}_s^{\epsilon})-\overline{b}_i(K\Delta,x_{K \Delta}^{\epsilon})ds \bigg| \mathfrak{F}'_{K \Delta}  \right ] \bigg| \mathfrak{F}'_{t_1}  \right ]} \right ] \\
   &\leq \mathbb{E} \left [ \abs{\mathbb{E} \left [ \sum_{K=\lceil t_1 / \Delta \rceil }^{\lceil t_2/ \Delta \rceil -1} \mathbb{E} \left [ \epsilon \int_{0}^{ \Delta / \epsilon  } b_i(K\Delta,x_{K \Delta}^{\epsilon},\tilde{Y}_{s+K \Delta/\epsilon}^{\epsilon})-b_i(K\Delta,x_{K \Delta}^{\epsilon},y_s^{(K\Delta,x_{K \Delta}^{\epsilon},Y_{K \Delta}^{\epsilon})})ds \bigg| \mathfrak{F}'_{K \Delta}  \right ] \bigg| \mathfrak{F}'_{t_1}  \right ]} \right ]\\
   &+\mathbb{E} \left [ \abs{\mathbb{E} \left [ \sum_{K=\lceil t_1 / \Delta \rceil }^{\lceil t_2/ \Delta \rceil -1} \mathbb{E} \left [ \epsilon \int_{0}^{\Delta / \epsilon  } b_i(K\Delta,x_{K \Delta}^{\epsilon},y_s^{(K\Delta,x_{K \Delta}^{\epsilon},Y_{K \Delta}^{\epsilon})})-\overline{b}_i(K\Delta,x_{K \Delta}^{\epsilon})ds \bigg| \mathfrak{F}'_{K \Delta}  \right ] \bigg| \mathfrak{F}'_{t_1}  \right ]} \right ]
\end{align*}
The second term on the right hand side tends to zero as before.\\
Now also the first term on the right hand side tends to zero:
\begin{align*}
    &\mathbb{E} \left [ \abs{\mathbb{E} \left [ \sum_{K=\lceil t_1 / \Delta \rceil }^{\lceil t_2/ \Delta \rceil -1} \mathbb{E} \left [ \epsilon \int_{0}^{ \Delta / \epsilon  } b_i(K\Delta,x_{K \Delta}^{\epsilon},\tilde{Y}_{s + K \Delta/\epsilon}^{\epsilon})-b_i(K\Delta,x_{K \Delta}^{\epsilon},y_s^{(K\Delta,x_{K \Delta}^{\epsilon},Y_{K \Delta}^{\epsilon})})ds \bigg| \mathfrak{F}'_{K \Delta}  \right ] \bigg| \mathfrak{F}'_{t_1}  \right ]} \right ]\\
    & \leq  \sum_{K=\lceil t_1 / \Delta \rceil }^{\lceil t_2/ \Delta \rceil -1}  \epsilon \int_{0}^{\Delta / \epsilon  } L \mathbb{E} \left [  \abs{\tilde{Y}_{s + K \Delta/\epsilon}^{\epsilon}-y_s^{(K\Delta,x_{K \Delta}^{\epsilon},Y_{K \Delta}^{\epsilon})}} \right ] ds    
\end{align*}
which tends to zero as $\epsilon \to 0$ by \eqref{eq:assumption_tilde_frozen}.\\
Finally \eqref{eq_aux_3} and \eqref{eq_aux} follow by a similar argument. This concludes the proof.
\end{proof}
\begin{remark}
With similar ideas we can perturb also the diffusion term of the fast equation with a slower term of order $\epsilon^{-\theta}$, $0 \leq \theta < 1/2$.
\end{remark}

\section{Financial Application: Local Stochastic Volatility Model}
\label{sec:Financial Application}
\subsection{Objective Probability}
Under the objective probability measure $P$ we consider a local stochastic volatility model defined by the following system of stochastic differential equations dependent on a small parameter $\epsilon \in (0,1]$
\begin{equation} \label{eq:system_loc_stoch_vol}
\begin{cases}
dS_t^{\epsilon}=\mathcal{H}(t,S_t^\epsilon,Y_t^\epsilon) S_t^\epsilon dt+ \mathcal{F}(t,S_t^{\epsilon},Y_t^\epsilon)  S_t^\epsilon dW_t \\
S_0^{\epsilon}=s_0 >0\\
dY_t^{\epsilon}=\epsilon^{-1} \mathcal{B}(t,S_t^{\epsilon},Y_t^\epsilon) dt+\epsilon^{-1/2} \mathcal{C}(t,S_t^{\epsilon},Y_t^\epsilon)  d\tilde{W}_t \\
Y_0^{\epsilon}=y_0 \in \mathbb{R}
\end{cases}
\end{equation}
where $0 \leq t \leq T$, $\{S_t^\epsilon\}_{t \geq 0}$ is the stock price, the real function $\mathcal{H}(t,s,y)$ is the local stochastic drift, $\mathcal{F}(t,s,y)$ is the local stochastic volatility, $\mathcal{B}(t,s,y)$ and $\mathcal{C}(t,s,y)$ are the coefficients of the process $\{Y_t^\epsilon\}_{t \geq 0}$ and $\{W_t\}_{t \geq 0}$ and $\{\tilde{W}_t\}_{t \geq 0}$ are Brownian motions with instantaneous correlation $\rho \in (-1,1)$. The parameter $\epsilon$ represents the ratio of time scales between the stock price and the stochastic volatility.\\
We assume that $$\tilde{W}_t= \rho W_t + \sqrt{1-\rho^2}Z_t$$ where $\{Z_t\}_{t \geq 0}$ is a Brownian motion independent of $\{W_t\}_{t \geq 0}$ so that $corr(W_t,\tilde{W}_t)=\rho$.

To apply the theory of the previous section it is convenient to consider the log-price $X_t^\epsilon=\log(S_t^\epsilon)$. By Ito's formula we have:
\begin{equation} \label{system_example}
\begin{cases}
dX_t^{\epsilon}=(H(t,X_t^{\epsilon},Y_t^\epsilon)- F(t,X_t^{\epsilon},Y_t^\epsilon)^2/2) dt+ F(t,X_t^{\epsilon},Y_t^\epsilon) dW_t \\
X_0^{\epsilon}=x_0 \\
dY_t^{\epsilon}=\epsilon^{-1} B(t,X_t^{\epsilon},Y_t^\epsilon)dt+\epsilon^{-1/2}  C(t,X_t^{\epsilon},Y_t^\epsilon)  d\tilde{W}_t \\
Y_0^{\epsilon}=y_0 
\end{cases}
\end{equation}
where $x_0 = \log(s_0)$,
$H(t,x,y)=\mathcal{H}(t,e^x,y)$, $F(t,x,y)=\mathcal{F}(t,e^x,y)$,  $B(t,x,y)=\mathcal{B}(t,e^x,y)$ and $C(t,x,y)=\mathcal{C}(t,e^x,y)$.\\
We assume that 
\begin{align*}
& |H(t,x,y)|\leq M \\
& 0<M_1\leq F(t,x,y) \leq M_2 \\
& |B(t,x,y)| \leq M (1+|x|+|y|) \\
& |C(t,x,y)| \leq M
\end{align*}
for $M>0$, $M_1>0$, $M_2>0$,  $0 \leq t \leq T$, $x,y \in \mathbb{R}$,
\begin{align*} 
&\abs{H(t,x_2,y_2)-H(s,x_1,y_1)}  \leq L (|t-s|^{\gamma_0}+|x_2-x_1|+|y_2-y_1|) \\
&\abs{F(t,x_2,y_2)-F(s,x_1,y_1)}  \leq L (|t-s|^{\gamma_0}+|x_2-x_1|+|y_2-y_1|) \\
& \abs{B(t,x_2,y_2)-B(s,x_1,y_1)}  \leq L (|t-s|^{\gamma_0}+|x_2-x_1|+|y_2-y_1|) \\
& \abs{C(t,x_2,y_2)-C(s,x_1,y_1)}  \leq L (|t-s|^{\gamma_0}+|x_2-x_1|+|y_2-y_1|)
\end{align*}
for $L>0$, $\gamma_0>0$, $0 \leq t,s \geq T$, $x_1,x_2,y_1,y_2 \in \mathbb{R}$ and 
\begin{equation}\label{eq:dissipative_finance}
(B(t,x,y_2)-B(t,x,y_1))(y_2-y_1) + (C(t,x,y_2)-C(t,x,y_1))^2 \leq -\beta |y_2-y_1|^2
\end{equation}
for $\beta>0$,  $0 \leq t \leq T$, $x,y_1,y_2 \in \mathbb{R}$.
\begin{remark}
As we have seen in section \ref{sec:Explicit conditions} the dissipativity condition \eqref{eq:dissipative_finance} gives all the ergodic properties. It is satisfied for example by the coefficients of the Ornstein Uhlenbeck process in \cite{papanicolaou2}.
\end{remark}

Since system \eqref{system_example} satisfies hypotheses \ref{hp:lip}, \ref{hp:sub}, \ref{hp:pos}, \ref{hp:dis} then by theorem \ref{th:averaging} we have that $ X^{\epsilon} \to \overline{X}$ weakly in  $\mathcal{C}[0,T]$ as $\epsilon \to 0$ where the process $\overline{X}$ is the unique strong solution of the averaged equation
\begin{align*}
d\overline{X} _t = \left (\overline{H}(t,\overline{X}_t)- \overline{F}(t,\overline{X}_t)^2/2 \right)dt+\overline{F}(t,\overline{X}_t)dW_t && \overline{X}_0=x_0, \forall t \in[0,T]
\end{align*}
Here we have
$$\overline{H}(t,x)=\int_{\mathbb{R}} H(t,x,z)\mu^{t,x}(dz)$$
$$\overline{F}(t,x)=\sqrt{\int_{\mathbb{R}} F(t,x,z)^2\mu^{t,x}(dz)}$$
where $\mu^{t,x}$ is the distribution of the unique invariant measure for the semigroup of the frozen equation $\{ \mathcal{P}_s^{t,x} \}_{s \geq 0}$ defined by $\mathcal{P}_s^{t,x} \phi(y)=\mathbb{E}[\phi(y_s^{t,x,y})]$, $y \in \mathbb{R}$, $\phi \in B_B(\mathbb{R})$
\begin{align*}
 d y_{s}^{t,x,y} =B\left(t,x,y_{s}^{t,x,y}\right)  d s+C\left(t,x,y_{s}^{t,x,y}\right )d \tilde{W}_{s} && y_{0}^{t,x,y}=y \in \mathbb{R}
\end{align*}
Then we define $\overline{S}_t=\exp(\overline{X}_t)$ and by Ito's formula we have:
\begin{align*}
d\overline{S} _t =  \overline{\mathcal{H}}(t,\overline{S} _t) \overline{S}_t  dt+\overline{\mathcal{F}}(t,\overline{S} _t) \overline{S}_t dW_t && \overline{S}_0=s_0, \forall t \in[0,T]
\end{align*}
where 
\begin{equation}\label{eq:averaged_drift}
\overline{\mathcal{H}}(t,s)=\overline{H}(t,\log(s))=\int_{\mathbb{R}} \mathcal{H}(t,s,z)\mu^{t,x}(dz)
\end{equation}
\begin{equation}\label{eq:averaged_volatility}
\overline{\mathcal{F}}(t,s)=\overline{F}(t,\log(s))=\sqrt{\int_{\mathbb{R}} \mathcal{F}(t,s,z)^2\mu^{t,x}(dz)}
\end{equation}
Since the family of processes $\{X^\epsilon\}_\epsilon$ converges weakly in $\mathcal{C}[0,T]$ to $\overline{X}$ then the family of processes $\{S^\epsilon=\exp(X^\epsilon)\}_\epsilon$ converges weakly in $\mathcal{C}[0,T]$ to the process $\overline{S}=\exp(\overline{X})$.\\
We have then the following theorem:
\begin{theorem}
The family $\{S^\epsilon\}_{\epsilon>0}$ converges weakly in $\mathcal{C}[0,T]$ as $\epsilon \to 0$ to the unique strong solution $\overline{S}$ of the averaged equation 
\begin{align*}
d\overline{S} _t =  \overline{\mathcal{H}}(t,\overline{S}_t) \overline{S}_t  dt+\overline{\mathcal{F}}(t,\overline{S} _t) \overline{S}_t dW_t && \overline{S}_0=s_0, \forall t \in[0,T]
\end{align*}
\end{theorem}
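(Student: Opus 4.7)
The plan is to reduce the claim for $\{S^\epsilon\}_{\epsilon>0}$ to an application of Theorem \ref{th:averaging2} on the log-price $X^\epsilon=\log(S^\epsilon)$, and then transfer the weak convergence back through the exponential via the continuous mapping theorem.

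First I would verify that system \eqref{system_example} fits the framework of Section \ref{sec:Explicit conditions}. The coefficients $b(t,x,y)=H(t,x,y)-F(t,x,y)^2/2$ and $\sigma(t,x,y)=F(t,x,y)$ are Lipschitz and Hölder in time with exponent $\gamma_0$, using the standing bounds $|H|\leq M$ and $M_1\leq F\leq M_2$ together with the identity $F^2(t,x_2,y_2)-F^2(s,x_1,y_1)=(F+F)(F-F)$, so Hypothesis \ref{hp:lip} is satisfied. Hypothesis \ref{hp:sub} is immediate from the bounds, since $b,\sigma$ are in fact bounded (which is stronger than sublinear in $x$), while $B,C$ satisfy the required sublinearity by assumption. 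Hypothesis \ref{hp:pos} holds because $F\geq M_1>0$, and Hypothesis \ref{hp:dis} is exactly the assumed dissipativity \eqref{eq:dissipative_finance}.

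With the four explicit hypotheses in place, Theorem \ref{th:averaging2} yields $X^\epsilon\to\overline{X}$ weakly in $\mathcal{C}[0,T]$, where $\overline{X}$ is the unique strong solution of
\begin{align*}
d\overline{X}_t=\bigl(\overline{H}(t,\overline{X}_t)-\tfrac12\overline{G}(t,\overline{X}_t)\bigr)dt+\overline{F}(t,\overline{X}_t)dW_t,\qquad \overline{X}_0=x_0,
\end{align*}
with $\overline{H},\overline{G}$ the ergodic averages of $H$ and $F^2$ respectively against the invariant measure $\mu^{t,x}$ of the frozen equation, and $\overline{F}=\sqrt{\overline{G}}$. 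Here I would note that the averaging principle, applied to the drift coefficient $H-F^2/2$, naturally produces $\overline{H}-\overline{G}/2$ by linearity of the ergodic limit in Hypothesis \ref{hp:est}, while averaging $\sigma\sigma^T/2=F^2/2$ produces $\overline{G}/2=\overline{F}^2/2$.

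Next, since the exponential map $\exp:\mathcal{C}[0,T]\to\mathcal{C}[0,T]$, $\eta\mapsto e^{\eta(\cdot)}$, is continuous for the uniform metric, the continuous mapping theorem gives $S^\epsilon=\exp(X^\epsilon)\to\exp(\overline{X})$ weakly in $\mathcal{C}[0,T]$. It remains to identify $\exp(\overline{X})$ with the strong solution $\overline{S}$ of the claimed averaged SDE. Applying Ito's formula to $\overline{S}_t=\exp(\overline{X}_t)$ and using $\overline{F}^2=\overline{G}$, the $-\overline{G}/2$ term cancels against the Ito correction, producing
\begin{align*}
d\overline{S}_t=\overline{S}_t\,\overline{H}(t,\log\overline{S}_t)\,dt+\overline{S}_t\,\overline{F}(t,\log\overline{S}_t)\,dW_t,
\end{align*}
which by \eqref{eq:averaged_drift}--\eqref{eq:averaged_volatility} coincides with the desired SDE driven by $\overline{\mathcal{H}}$ and $\overline{\mathcal{F}}$. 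Strong uniqueness of $\overline{S}$ follows because the local Lipschitzianity of $\overline{H},\overline{F}$ (Hypothesis \ref{hp:est} together with Lemma \ref{lemma:ergodicity}) transfers to $\overline{\mathcal{H}}(t,s)s$ and $\overline{\mathcal{F}}(t,s)s$ on $(0,\infty)$, and $\overline{S}_t>0$ a.s.\ since $\overline{S}=\exp(\overline{X})$. No step presents a real obstacle; the only mild subtlety is the rewriting of the Ito correction to match the averaged log-drift with the averaged linear drift of $\overline{S}$, which is what fixes the specific forms of $\overline{\mathcal{H}}$ and $\overline{\mathcal{F}}$.
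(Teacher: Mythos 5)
Your proposal is correct and follows essentially the same route as the paper: verify Hypotheses \ref{hp:lip}, \ref{hp:sub}, \ref{hp:pos}, \ref{hp:dis} for the log-price system \eqref{system_example}, apply the averaging theorem to get $X^\epsilon\to\overline{X}$ weakly, push the convergence through the (continuous) exponential map, and identify the limit SDE via Ito's formula, with the Ito correction cancelling the averaged $-\overline{F}^2/2$ term. Your extra remarks on the linearity of the ergodic average applied to $H-F^2/2$ and on the positivity and uniqueness of $\overline{S}$ are consistent with what the paper leaves implicit.
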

Let's notice that the limit model for the stock price $\{ \overline{S}_t \}_{t \geq 0}$ is a local volatility model with local drift $\overline{\mathcal{H}}(t,s)$ and local volatility $\overline{\mathcal{F}}(t,s)$.

\subsection{Risk Neutral Pricing}
Now as in \cite{papanicolaou2} we construct a family of risk neutral measures by means of Girsanov theorem:
indeed let the short rate $\mathcal{R}(t,S_t^\epsilon,Y_t^\epsilon)$ where $\mathcal{R}(t,s,y)$ is a deterministic function. Then set $r(t,x,y)=\mathcal{R}(t,\log(s),y)$. Assume that $r(t,x,y)$ is a bounded and
\begin{align*}
\abs{r(t,x_2,y_2)-r(s,x_1,y_1)}  \leq L (|t-s|^{\gamma_0}+|x_2-x_1|+|y_2-y_1|)
\end{align*}
for every $0 \leq t,s \leq T$, $x_1,x_2,y_1,y_2 \in \mathbb{R}$.\\
Let $\theta_t=(H(t,X_t^{\epsilon},Y_t^{\epsilon})-r(t,X_t^{\epsilon},Y_t^{\epsilon}))/F(t,X_t^{\epsilon},Y_t^{\epsilon})$ and define:
\begin{equation*}
    W_t^*=W_t+\int_0^t \theta_s ds
\end{equation*}

\begin{equation*}
    Z_t^*=Z_t+\int_0^t \gamma_s ds
\end{equation*}
for an arbitrary adapted bounded $\gamma_t$.\\
Now by Girsanov theorem $W^*$ and $Z^*$ are independent Brownian motions under a measure $Q^\gamma$ defined by:

\begin{equation*}
    dQ^\gamma=\exp\left( \int_0^T \theta_s dW_s + \int_0^T \gamma_s dZ_s -1/2 \int_0^T \theta_s^2 + \gamma_s^2 ds\right) dP
\end{equation*}
Then $\{Q^\gamma\}_{\gamma}$ is a family of equivalent risk-neutral martingale measures parametrized by $\gamma$ which is called market price of volatility risk. We assume that $\gamma_t=\gamma(t,X_t^{\epsilon},Y_t^{\epsilon})$ and
\begin{align*}
\abs{\gamma(t,x_2,y_2)-\gamma(s,x_1,y_1)}  \leq L (|t-s|^{\gamma_0}+|x_2-x_1|+|y_2-y_1|)
\end{align*}
for every $0 \leq t,s \leq T$, $x_1,x_2,y_1,y_2 \in \mathbb{R}$.\\
Then under $Q^\gamma$ \eqref{system_example} becomes:
\begin{equation} \label{system_example2}
\begin{cases}
dX_t^{\epsilon}=(r(t,X_t^{\epsilon},Y_t^\epsilon)- F(t,X_t^{\epsilon},Y_t^\epsilon)^2/2) dt+ F(t,X_t^{\epsilon},Y_t^\epsilon) dW^*_t \\
X_0^{\epsilon}=x_0 \\
dY_t^{\epsilon}= \left[\epsilon^{-1} \left(B(t,X_t^{\epsilon},Y_t^\epsilon)\right) -  \epsilon^{-1/2}  C(t,X_t^{\epsilon},Y_t^\epsilon) \Lambda(t,X_t^{\epsilon},Y_t^{\epsilon}) \right]dt+\epsilon^{-1/2}  C(t,X_t^{\epsilon},Y_t^\epsilon)  d\tilde{W}^*_t \\
Y_0^{\epsilon}=y_0 
\end{cases}
\end{equation}
where $$\tilde{W}_t^*= \rho W_t^* + \sqrt{1-\rho^2}Z_t^*$$ and $$\Lambda(t,X_t^{\epsilon},Y_t^{\epsilon}) = \rho (H(t,X_t^{\epsilon},Y_t^{\epsilon})-r(t,X_t^{\epsilon},Y_t^{\epsilon}))/F(t,X_t^{\epsilon},Y_t^{\epsilon})+ \sqrt{1-\rho^2}\gamma(t,X_t^{\epsilon},Y_t^{\epsilon})$$
or equivalently:
\begin{equation} \label{system_example3}
\begin{cases}
dS_t^{\epsilon}=\mathcal{R}(t,S_t^{\epsilon},Y_t^\epsilon) S_t^\epsilon dt+ \mathcal{F}(t,S_t^{\epsilon},Y_t^\epsilon)  S_t^\epsilon dW^*_t \\
S_0^{\epsilon}=s_0 >0\\
dY_t^{\epsilon}= \left[\epsilon^{-1} \mathcal{B}(t,S_t^{\epsilon},Y_t^\epsilon) -  \epsilon^{-1/2}  \mathcal{C}(t,S_t^{\epsilon},Y_t^\epsilon) \lambda(t,S_t^{\epsilon},Y_t^{\epsilon}) \right]dt+\epsilon^{-1/2}  \mathcal{C}(t,S_t^{\epsilon},Y_t^\epsilon)  d\tilde{W}^*_t \\
Y_0^{\epsilon}=y_0 
\end{cases}
\end{equation}
where $\lambda(t,s,y)=\Lambda(t,\log(s),y)$.

Now let $0 \leq \tau \leq T$ and consider a financial European option with maturity $T$. A derivative of this kind is defined by its non negative payoff $\Psi=\Psi(s)$
which prescribes the value of the contract at its maturity.\\
Under the risk neutral measure $Q^\gamma$ we consider the fair price $P^\epsilon_\tau$ of the contract at time $\tau$. This is calculated as:
\begin{align*}
P^\epsilon_\tau= \mathbb{E}^*\left [ e^{-\int_{\tau}^T \mathcal{R}(s,S_s^{\epsilon},Y_s^\epsilon) ds} \Psi\left(S^\epsilon_T \right)  \big| \mathcal{F}_\tau \right ] 
\end{align*}
where $\mathcal{F}_\tau= \sigma(W_t^*,Z_t^*, 0 \leq t \leq \tau )$.\\
Applying the theory of section \ref{sec:Explicit conditions} we can show the following theorem:
\begin{theorem}\label{th_example}
Under the risk neutral measure $Q^\gamma$ the family $\{S^\epsilon\}_{\epsilon>0}$ converges weakly in $\mathcal{C}[0,T]$ as $\epsilon \to 0$ to the solution $\overline{S}$ of the averaged equation
\begin{align}\label{dynamic_limit}
d \overline{S}_t=\overline{\mathcal{R}}(t,\overline{S}_t) \overline{S}_t dt+ \overline{\mathcal{F}}(t,\overline{S}_t)\overline{S}_t dW_t^*  && \overline{S}_0=s_0, t \geq 0
\end{align}
where 
\begin{align*}
\overline{\mathcal{R}}(t,s)=\int_{\mathbb{R}} \mathcal{R}(t,s,z)\mu^{t,x}(dz)
&&
\overline{\mathcal{F}}(t,s)=\sqrt{\int_{\mathbb{R}} \mathcal{F}(t,s,z)^2\mu^{t,x}(dz)}
\end{align*}
Moreover let $\Psi \in \mathcal{C}_B(\mathbb{R})$ and $0 \leq \tau \leq T$ then   
\begin{align}\label{eq_thesis_ex}
\lim_{\epsilon \to 0} P^\epsilon_\tau=\overline{P}_\tau
\end{align}
where 
$$\overline{P}_\tau=\mathbb{E}^* \left [ e^{-\int_{\tau}^T \overline{\mathcal{R}}(s,\overline{S}_s) ds} \Psi(\overline{S}_T)  \big| \mathcal{F}_\tau  \right] $$ 
\end{theorem}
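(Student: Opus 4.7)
The plan is to prove the two parts of the statement separately. For the weak convergence of $S^\epsilon$ under $Q^\gamma$, I would first pass to the log-price $X^\epsilon=\log S^\epsilon$, which by It\^o's formula satisfies system \eqref{system_example2}. This system is precisely of the form \eqref{eq:perturbed_d} with $\eta=1/2$ and perturbation $D(t,x,y)=-C(t,x,y)\Lambda(t,x,y)$. The first task is to check that the hypotheses of Proposition \ref{prop:extension2} are met: dissipativity and non-degeneracy are unchanged from the objective setting, while the Lipschitzianity and sublinearity of $D$ reduce to the same properties for $\Lambda=\rho(H-r)/F+\sqrt{1-\rho^{2}}\gamma$, which are inherited from the assumed regularity of $H,r,\gamma,C$ together with the bounds $M_1\leq F\leq M_2$ and the boundedness of $C$ (so in fact $D$ is bounded and Lipschitz). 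Proposition \ref{prop:extension2} then yields weak convergence $X^\epsilon\to\overline{X}$ in $\mathcal{C}[0,T]$, the averaged equation being driven by the invariant measure $\mu^{t,x}$ of the same frozen dynamics \eqref{eq:frozen} (unchanged since the $\epsilon^{-1/2}$ perturbation drops out in the limit). Applying the continuous mapping theorem to $\exp:\mathcal{C}[0,T]\to\mathcal{C}[0,T]$ and then It\^o's formula to $\exp(\overline{X})$ gives \eqref{dynamic_limit} with the stated $\overline{\mathcal{R}},\overline{\mathcal{F}}$.

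To obtain \eqref{eq_thesis_ex}, I would augment the slow variable with the discount factor $R_t^\epsilon=\int_0^t r(s,X_s^\epsilon,Y_s^\epsilon)\,ds$. The triple $((X^\epsilon,R^\epsilon),Y^\epsilon)$ is again a slow-fast system of the form covered by Proposition \ref{prop:extension2}, because the new component has zero diffusion and its drift $r$ is bounded and Lipschitz (and hence sublinear) in $(t,x,y)$. The averaging principle then gives joint weak convergence $(X^\epsilon,R^\epsilon)\to(\overline{X},\overline{R})$ in $\mathcal{C}[0,T]^{2}$ with $\overline{R}_t=\int_0^t\overline{r}(s,\overline{X}_s)\,ds$. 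Since the functional $(x,r)\mapsto e^{-(r(T)-r(\tau))}\Psi(e^{x(T)})$ is continuous and bounded on $\mathcal{C}[0,T]^{2}$ (here I use $\Psi\in\mathcal{C}_B$ and the boundedness of $r$), taking unconditional expectations already yields $\mathbb{E}^{*}[P_\tau^\epsilon]\to\mathbb{E}^{*}[\overline{P}_\tau]$.

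The main obstacle is upgrading this to convergence of the $\mathcal{F}_\tau$-conditional expectations themselves. My plan is to exploit the Markov property of $(X^\epsilon,Y^\epsilon)$: there exists a deterministic $v^\epsilon$ with $P_\tau^\epsilon=v^\epsilon(\tau,S_\tau^\epsilon,Y_\tau^\epsilon)$, and analogously $\overline{P}_\tau=\overline{v}(\tau,\overline{S}_\tau)$ (with no $y$-dependence, since $\overline{S}$ is a local volatility model). The idea is to re-run the averaging argument on $[\tau,T]$ with random initial data $(S_\tau^\epsilon,Y_\tau^\epsilon)$, whose laws are tight thanks to \eqref{diseq1:lemma} and to the fact that hypothesis \ref{hp:expectation} still holds for the perturbed system via Step 1 of the proof of Proposition \ref{prop:extension2}. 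The uniformity in the starting point $y$ of the ergodic estimate \eqref{eq:erg1} is the key ingredient that lets $v^\epsilon(\tau,\cdot,y)$ lose its $y$-dependence in the limit; combined with weak convergence of $(S_\tau^\epsilon,Y_\tau^\epsilon,W_{\tau+\cdot}^{*}-W_\tau^{*},Z_{\tau+\cdot}^{*}-Z_\tau^{*})$ and uniform integrability (bounded integrand), I expect $v^\epsilon(\tau,S_\tau^\epsilon,Y_\tau^\epsilon)\to\overline{v}(\tau,\overline{S}_\tau)$ in the appropriate sense. The delicate point is precisely this transfer from fixed to random initial data at time $\tau$, which will require a careful joint weak-convergence argument on $[\tau,T]$ combined with the ergodicity estimates from Section \ref{sec:Explicit conditions}.
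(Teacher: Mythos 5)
Your proposal follows the paper's proof almost step for step. For the weak convergence of $S^\epsilon$ under $Q^\gamma$ you do exactly what the paper does: identify \eqref{system_example2} as an instance of \eqref{eq:perturbed_d} with $\eta=1/2$ and $D=\pm C\Lambda$, check that $D$ is bounded and Lipschitz using $F\geq M_1>0$ and the regularity of $H,r,\gamma,C$, invoke Proposition \ref{prop:extension2}, and push the convergence through $\exp$; for the price you also make the paper's move of augmenting the slow variable by the accumulated short rate (your $R^\epsilon$ is the paper's $I^\epsilon$ written for the log-price) and reducing $P^\epsilon_\tau$ to a function of the Markov state at time $\tau$. The one place you genuinely diverge is the final limit of the conditional expectations: the paper disposes of it in a single sentence (``by the weak convergence of $(S^\epsilon,I^\epsilon)$ it follows\dots''), whereas you single it out as the delicate step and sketch a heavier argument --- re-running the averaging on $[\tau,T]$ with random initial data $(S^\epsilon_\tau,Y^\epsilon_\tau)$ and using the uniformity in $y$ of \eqref{eq:erg1} to remove the $y$-dependence of $v^\epsilon(\tau,\cdot,\cdot)$ in the limit. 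Your instinct is sound: weak convergence of the path together with the Markov property does not by itself yield convergence of the conditional expectations, and the ingredients you list (uniformity of the ergodic bound in the initial datum, tightness of the time-$\tau$ marginals, boundedness of the integrand) are what a fully detailed argument would require; but as written this step remains a plan rather than a proof, so your write-up is no more complete at this point than the paper's --- it is simply more explicit about where the remaining work lies. Two minor remarks that apply equally to you and to the paper: the sign of $\Lambda$ is immaterial, and applying Proposition \ref{prop:extension2} to the augmented slow variable $(X^\epsilon,R^\epsilon)$ formally breaks the non-degeneracy Hypothesis \ref{hp:pos} (the $R$-component carries no noise), so one should instead verify Hypothesis \ref{hp:est} for the augmented system directly, which is immediate since the averaged diffusion matrix only acquires a zero row.
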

Let's notice that again the limit model for the stock price $\{ \overline{S}_t \}_{t \geq 0}$ is a local volatility model with $\overline{\mathcal{R}}(t,s)$, $\overline{\mathcal{F}}(t,s)$.
\begin{proof}
Set $D(t,x,y)=C(t,x,y)\Lambda(t,x,y)$ and notice that $F^2(t,x,y)$ and $D(t,x,y)$ are bounded and satisfy  
\begin{align*} 
&\abs{F^2(t,x_2,y_2)-F^2(s,x_1,y_1)}  \leq \overline{C} (|t-s|^{\gamma_0}+|x_2-x_1|+|y_2-y_1|) \\
&\abs{D(t,x_2,y_2)-D(s,x_1,y_1)}  \leq \overline{C} (|t-s|^{\gamma_0}+|x_2-x_1|+|y_2-y_1|) 
\end{align*}
for $\overline{C}>0$.\\
Therefore we can apply proposition \ref{prop:extension2} to system \eqref{system_example2} under the risk neutral measure $Q^\gamma$. Then the family of processes $\{X^\epsilon\}_\epsilon$ converges weakly in $\mathcal{C}[0,T]$ to the solution $\overline{X}$ of
\begin{align*}
d\overline{X} _t = \left (\overline{r}(t,\overline{X}_t)- \overline{F}(t,\overline{X}_t)^2/2 \right)dt+\overline{F}(t,\overline{X}_t))dW^*_t && \overline{X}_0=x_0, \forall t \in[0,T]
\end{align*}
This implies that the family of processes $\{S^\epsilon=\exp(X^\epsilon)\}_\epsilon$ converges weakly in $\mathcal{C}[0,T]$ to the process $\overline{S}=\exp(\overline{X})$ satisfying \eqref{dynamic_limit}.\\
When the short rate is deterministic we can proceed in this way: by the Markov property we have
\begin{align}\label{eq:markov_price}
P^\epsilon_\tau= \mathbb{E}^*\left [ e^{-\int_{\tau}^T \mathcal{R}(s) ds} \Psi\left(S^\epsilon_T \right)  \big| S_\tau^\epsilon,Y_\tau^\epsilon \right ] 
\end{align}
By the weak convergence of $S^\epsilon$ it follows:
\begin{align*}
\lim_{\epsilon \to 0} P^\epsilon_\tau=\mathbb{E}^* \left [ e^{-\int_{\tau}^T \mathcal{R}(s) ds} \Psi(\overline{S}_T)  \big| \overline{S}_\tau  \right]
\end{align*}
Then by the Markov property we have \eqref{eq_thesis_ex}.

If however the short rate is not deterministic we need to consider the integral process
\begin{equation}
    I_t^\epsilon=\int_0^t \mathcal{R}(s,S_s^{\epsilon},Y_s^\epsilon) ds
\end{equation}
and consider $(S^{\epsilon},I^\epsilon,Y^\epsilon)$ under the risk neutral measure. Then $(S^{\epsilon},I^\epsilon)$ converges weakly in $\mathcal{C}[0,T]^2$ to $(\overline{S},\overline{I})$ where the process $\overline{I}$ is defined by 
\begin{align*}
    d\overline{I}_t= \overline{\mathcal{R}}(t,\overline{S}_t) dt && \overline{I}_0=0
\end{align*}
Now we have 
\begin{align}
P^\epsilon_\tau= \mathbb{E}^* \left [ e^{-(I^\epsilon_T-I^\epsilon_\tau)} \Psi\left(S^\epsilon_T \right)  \big| \mathcal{F}_\tau \right ] 
\end{align}
As $(S^{\epsilon},I^\epsilon,Y^\epsilon)$ is Markovian:
\begin{align}
P^\epsilon_\tau= \mathbb{E}^* \left [ e^{-(I^\epsilon_T-I^\epsilon_\tau)} \Psi\left(S^\epsilon_T \right)  \big| S_\tau^\epsilon, I_\tau^\epsilon, Y_\tau^\epsilon \right ] 
\end{align}
Then by the weak convergence of $(S^{\epsilon},I^\epsilon)$ we have:
\begin{align*}
\lim_{\epsilon \to 0} P^\epsilon_\tau & =\mathbb{E}^* \left [ e^{-(\overline{I}_T-\overline{I}_\tau)} \Psi(\overline{S}_T)  \big| \overline{S}_\tau, \overline{I}_\tau  \right] \\
& =\mathbb{E}^* \left [ e^{-(\overline{I}_T-\overline{I}_\tau)} \Psi(\overline{S}_T)  \big| \mathcal{F}_\tau  \right]
\end{align*}
which is \eqref{eq_thesis_ex}.
\end{proof}
\subsection{Path-dependent Options}
Now we treat path-dependent options. We follow a similar approach to the one in \cite{fuhrman} by setting the problem in an infinite dimensional setting which permits us to take into account the whole trajectory of the stock price.\\
We assume for simplicity that the short rate is deterministic $\mathcal{R}(t)$ but the approach works also in the general case $\mathcal{R}(t,S_t^{\epsilon},Y_t^\epsilon)$ by considering the agumented system with integral process $I^\epsilon$ as we have just done.\\
For $t \in [0,T]$ under the risk-neutral measure $Q^\gamma$ consider the vector process
\begin{equation}
    \mathcal{S}_t^\epsilon = \left[S_t^\epsilon,S_t^\epsilon(\cdot) \right]^T
\end{equation}
where the notation $S^\epsilon_t(\cdot)$ denotes the trajectory $S^\epsilon_t(\theta)=S^\epsilon_{t+\theta}$, $\theta \in [-T,0]$.
Here we set $S^\epsilon_\theta=\nu_0(\theta)$ where $\nu_0 \in \mathcal{C}$[-T,0] is a given function.\\
Then by \cite{delay} the couple $\left( \mathcal{S}_t^\epsilon,Y_t^\epsilon \right)$ solves an infinite dimensional stochastic differential equation on the Hilbert space $H= \mathbb{R} \times \mathcal{L}^2[-T,0] \times \mathbb{R}$ and it is a Markov process thanks to the regularity of the coefficients of this equation. Moreover we set $H^1=\mathbb{R} \times \mathcal{L}^2[-T,0]$. 

Now consider a bounded path-dependent option with exercise date $T$ of the form 
\begin{align*}
 \Psi(S_T^\epsilon,S_T^\epsilon(\cdot)) && 
\end{align*}
where $\Psi \colon H^1 \rightarrow \mathbb{R}$ is bounded and continuous.\\
Under the risk neutral measure $Q^\gamma$ we consider the fair price $P^\epsilon_\tau$ of the contract at time $0\leq\tau  \leq T$. This is calculated as:
\begin{align*}
P^\epsilon_\tau= \mathbb{E}^* \left [ e^{-\int_{\tau}^T \mathcal{R}(s) ds} \Psi(S_T^\epsilon,S_T^\epsilon(\cdot))  \big| \mathcal{F}_\tau \right ] 
\end{align*}
where 
$$\mathcal{F}_\tau= \sigma(W_t^*,Z_t^*, 0 \leq t \leq \tau )$$
By the Markov property we have:
\begin{align*}
P^\epsilon_\tau= \mathbb{E}^* \left [ e^{-\int_{\tau}^T \mathcal{R}(s) ds} \Psi(S_T^\epsilon,S_T^\epsilon(\cdot))  \big| S_\tau^\epsilon,Y_\tau^\epsilon,S_\tau^\epsilon(\cdot) \right ] 
\end{align*}
Since $S_T^\epsilon(\cdot)$ is continuous, $S_T^\epsilon(0)=S_T^\epsilon$ and $\mathcal{C}[-T,0] \subset \mathcal{L}^2[-T,0]$ we can consider the restriction of the operator $\Psi$ to the set of continuous functions in the following way:
\begin{align*}
    & \Psi^R \colon \mathcal{C}[-T,0] \rightarrow \mathbb{R} \\
    & \Psi^R(\eta)=\Psi(\eta(0),\eta)
\end{align*}
Since $\Psi$ is bounded and continuous it is possible to show that $\Psi^R$ is also bounded and continuous.\\
Then we have:
\begin{align*}
P^\epsilon_\tau= \mathbb{E}^* \left [ e^{-\int_{\tau}^T \mathcal{R}(s) ds} \Psi^R(S_T^\epsilon(\cdot))  \big| S_\tau^\epsilon,Y_\tau^\epsilon,S_\tau^\epsilon(\cdot) \right ] 
\end{align*}
By the weak convergence of $S^\epsilon$ in the space of continuous functions  we have:
\begin{align*}
\lim_{\epsilon \to 0} P^\epsilon_\tau & =\mathbb{E}^* \left [ e^{-\int_{\tau}^T \mathcal{R}(s) ds}\Psi^R(\overline{S}_T(\cdot))   \big| \overline{S}_\tau,\overline{S}_\tau(\cdot) \right ]  \\
& =\mathbb{E}^* \left [ e^{-\int_{\tau}^T \mathcal{R}(s) ds}\Psi(\overline{S}_T,\overline{S}_T(\cdot))   \big| \overline{S}_\tau,\overline{S}_\tau(\cdot) \right ]
\end{align*}
Now for $t \in [0,T]$ let's consider the vector process
\begin{equation*}
    \overline{\mathcal{S}}_t = \left[\overline{S}_t,\overline{S}_t(\cdot) \right]^T
\end{equation*}
Here we set $\overline{S}_\theta=\nu_0(\theta)$.\\
As before by \cite{delay} $\overline{\mathcal{S}}_t$ solves an infinite dimensional stochastic differential equation on the Hilbert space $H^1= \mathbb{R} \times \mathcal{L}^2[-T,0]$ and it is a Markov process.
Therefore by the Markov property we have the following theorem:
\begin{theorem}
Let $\Psi \colon H^1 \rightarrow \mathbb{R}$ be bounded and continuous. Then
\begin{align*}
\lim_{\epsilon \to 0} P^\epsilon_\tau=\overline{P}_\tau
\end{align*}
where 
\begin{align*}\overline{P}_\tau =\mathbb{E}^* \left [ e^{-\int_{\tau}^T \mathcal{R}(s) ds}\Psi(\overline{S}_T,\overline{S}_T(\cdot))   \big| \mathcal{F}_\tau \right ]
\end{align*}
\end{theorem}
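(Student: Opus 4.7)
The approach is to lift the weak convergence established in theorem \ref{th_example} from the terminal value $S_T^\epsilon$ to the full path window $S_T^\epsilon(\cdot)$, and then to identify the limit of the conditional price using the Markov structure of the augmented state.

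First I recall from \cite{delay} that the augmented process $(S_t^\epsilon,S_t^\epsilon(\cdot),Y_t^\epsilon)$ is Markov on $H=\mathbb{R}\times\mathcal{L}^2[-T,0]\times\mathbb{R}$, so by the tower property
\begin{equation*}
P_\tau^\epsilon=\mathbb{E}^*\left[e^{-\int_\tau^T\mathcal{R}(s)ds}\,\Psi(S_T^\epsilon,S_T^\epsilon(\cdot))\,\Big|\,S_\tau^\epsilon,Y_\tau^\epsilon,S_\tau^\epsilon(\cdot)\right].
\end{equation*}
Since the trajectories of $S^\epsilon$ lie a.s.\ in $\mathcal{C}[0,T]$, the window $S_T^\epsilon(\cdot)$ is automatically continuous on $[-T,0]$, so I may replace $\Psi$ by its restriction $\Psi^R(\eta)=\Psi(\eta(0),\eta)$ on $\mathcal{C}[-T,0]$. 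This restriction is bounded and continuous for the sup-norm because $\mathcal{C}[-T,0]\hookrightarrow\mathcal{L}^2[-T,0]$ is a continuous embedding and $\eta\mapsto\eta(0)$ is sup-norm continuous.

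Second, theorem \ref{th_example} provides $S^\epsilon\Rightarrow\overline{S}$ weakly in $\mathcal{C}[0,T]$. The map $S\mapsto(S_\tau,S_\tau(\cdot),S_T(\cdot))$ from $\mathcal{C}[0,T]$ into $\mathbb{R}\times\mathcal{C}[-T,0]\times\mathcal{C}[-T,0]$ is sup-norm continuous, so the continuous mapping theorem yields joint weak convergence of the conditioning pair and the payoff argument. Testing against an arbitrary bounded continuous functional $\phi$ on $\mathbb{R}\times\mathcal{C}[-T,0]$ and using boundedness of $\Psi^R$ gives
\begin{equation*}
\mathbb{E}^*\!\left[\phi(S_\tau^\epsilon,S_\tau^\epsilon(\cdot))\,e^{-\int_\tau^T\mathcal{R}}\,\Psi^R(S_T^\epsilon(\cdot))\right]\longrightarrow\mathbb{E}^*\!\left[\phi(\overline{S}_\tau,\overline{S}_\tau(\cdot))\,e^{-\int_\tau^T\mathcal{R}}\,\Psi^R(\overline{S}_T(\cdot))\right].
\end{equation*}
Combined with uniform boundedness of $P_\tau^\epsilon$, this identifies $\lim_{\epsilon\to 0} P_\tau^\epsilon$ as a fixed continuous functional of $(\overline{S}_\tau,\overline{S}_\tau(\cdot))$. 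The Markov property of $\overline{\mathcal{S}}_t=[\overline{S}_t,\overline{S}_t(\cdot)]^T$ on $H^1$, again from \cite{delay}, then rewrites this functional as the conditional expectation $\mathbb{E}^*[e^{-\int_\tau^T\mathcal{R}}\,\Psi(\overline{S}_T,\overline{S}_T(\cdot))\mid\mathcal{F}_\tau]$, which is $\overline{P}_\tau$.

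The main obstacle is the delicate continuity of $\Psi^R$: the hypothesis only gives continuity of $\Psi$ in the $H^1$-topology (with its $\mathcal{L}^2$-component), which a priori is weaker than sup-norm continuity on the path space where the processes actually live. The continuous embedding $\mathcal{C}[-T,0]\hookrightarrow\mathcal{L}^2[-T,0]$, together with continuity of endpoint evaluation in the sup norm, repairs this. A secondary technical point is the passage of the conditional expectation to the limit, which is handled by the standard device of testing against bounded continuous $\phi$ and invoking the Markov structure both before and after the limit. Once these two points are in place, the rest is a direct application of the continuous mapping theorem for weak convergence.
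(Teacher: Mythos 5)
Your proposal follows essentially the same route as the paper: reduce to the restriction $\Psi^R(\eta)=\Psi(\eta(0),\eta)$ on $\mathcal{C}[-T,0]$ (continuous by the embedding $\mathcal{C}[-T,0]\hookrightarrow\mathcal{L}^2[-T,0]$ and sup-norm continuity of endpoint evaluation), use the Markov property of the augmented state $(S_\tau^\epsilon,S_\tau^\epsilon(\cdot),Y_\tau^\epsilon)$ from \cite{delay}, pass to the limit via the weak convergence of $S^\epsilon$ in $\mathcal{C}[0,T]$, and conclude with the Markov property of $\overline{\mathcal{S}}$. Your extra step of testing against bounded continuous $\phi$ merely makes explicit the identification of the limiting conditional expectation that the paper leaves implicit, so the two arguments coincide in substance.
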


This result has some limitations: even if the functionals defined on $\mathbb{R}\times \mathcal{L}^2$ permit to treat Asian options, they exclude many path-dependent derivatives, for example look-back options which are functions of the supremum (or infimum) of the stock price with respect to time. Therefore we want to set a strategy to cover also these derivatives: we start by considering look-back options of the form
\begin{align*}
    \Psi(S_T^\epsilon(\cdot))=\Psi^0 \left (S_T^\epsilon,\sup_{\theta \in [-T,0]} a(\theta)S_{T+\theta}^\epsilon \right) 
\end{align*}
where $\Psi^0 \colon \mathbb{R}^2 \rightarrow \mathbb{R}$ is a continuous bounded function and $a \in \mathcal{C}[-T,0]$.\\
We consider as usual the fair price $P^\epsilon_\tau$ of the contract at time $0\leq\tau  \leq T$ by
\begin{align*}
P^\epsilon_\tau= \mathbb{E}^*\left [  e^{-\int_{\tau}^T \mathcal{R}(s) ds} \Psi(S_T^\epsilon(\cdot))  \big| \mathcal{F}_\tau \right ] 
\end{align*}
Now we set
\begin{align*}
    \Psi(\eta)=\Psi^0 \left (\eta(0),\sup_{\theta \in [-T,0]} a(\theta)\eta(\theta) \right) && \eta \in \mathcal{C}[-T,0]
\end{align*}
Then for $\delta >0$ we define $\Psi^\delta \colon H^1 \rightarrow \mathbb{R}$ in the following way:
\begin{align*}
    \Psi^\delta(\phi,\eta)=\Psi^0 \left (\phi,\sup_{\theta \in [-T,0]} \eta^\delta(\theta) \right) && \phi \in \mathbb{R},\eta \in \mathcal{L}^2[-T,0]
\end{align*}
where 
\begin{align*}
    \eta^\delta(\theta)=\frac{1}{\delta} \int_{(\theta-\delta)^+}^\theta a(\xi)\eta(\xi)d\xi
\end{align*}
and we notice that
\begin{align*}
    \lim_{\delta \to 0} \Psi^\delta(\eta(0),\eta)=\Psi(\eta) 
\end{align*}
for every $\eta \in \mathcal{C}[-T,0]$.\\
This implies:
\begin{align*}
    \lim_{\delta \to 0} \Psi^\delta(S_T^\epsilon,S_T^\epsilon(\cdot))=\Psi(S_T^\epsilon(\cdot)) && a.s.
\end{align*}
Since $\Psi^\delta$ is bounded we have the Markov property:
\begin{align*}
\mathbb{E}^* \left [ \Psi^\delta(S_T^\epsilon,S_T^\epsilon(\cdot))  \big| \mathcal{F}_\tau \right ] =  \mathbb{E}^* \left [ \Psi^\delta(S_T^\epsilon,S_T^\epsilon(\cdot))  \big| S_\tau^\epsilon,Y_\tau^\epsilon,S_\tau^\epsilon(\cdot) \right ] 
\end{align*}
Then by dominated convergence we have:
\begin{align*}
\mathbb{E}^* \left [ \Psi(S_T^\epsilon(\cdot))  \big| \mathcal{F}_\tau \right ] =  \mathbb{E}^* \left [ \Psi(S_T^\epsilon(\cdot))  \big| S_\tau^\epsilon,Y_\tau^\epsilon,S_\tau^\epsilon(\cdot) \right ] 
\end{align*}
This implies:
\begin{align*}
P^\epsilon_\tau= \mathbb{E}^* \left [ e^{-\int_{\tau}^T \mathcal{R}(s) ds} \Psi(S_T^\epsilon(\cdot))  \big| S_\tau^\epsilon,Y_\tau^\epsilon,S_\tau^\epsilon(\cdot) \right ] 
\end{align*}
As before by the weak convergence of $S^\epsilon$ in the space of continuous functions we have:
\begin{align*}
\lim_{\epsilon \to 0} P^\epsilon_\tau=\overline{P}_\tau
\end{align*}
where
\begin{align*}\overline{P}_\tau =\mathbb{E}^* \left [ e^{-\int_{\tau}^T \mathcal{R}(s) ds}\Psi(\overline{S}_T(\cdot))   \big| \mathcal{F}_\tau \right ]
\end{align*}

This approach works in general for path-dependent derivatives defined by functionals on the space $\mathcal{C}[-T,0]$ which have a regular approximation through functionals on $\mathbb{R} \times \mathcal{L}^2[-T,0]$, indeed we have:
\begin{theorem}
Let $\Psi \colon \mathcal{C}[-T,0] \rightarrow \mathbb{R}$ be bounded and continuous. Moreover assume there exists a family of uniformly bounded functionals  $\{\Psi^\delta\}_{\delta>0}$, $\Psi^\delta \colon H^1 \rightarrow \mathbb{R}$ such that 
\begin{align*}
    \lim_{\delta \to 0} \Psi^\delta(\eta(0),\eta)=\Psi(\eta) && \forall \eta \in \mathcal{C}[-T,0]
\end{align*}
Then with the usual notations we have:
\begin{align*}
\lim_{\epsilon \to 0} P^\epsilon_\tau=\overline{P}_\tau
\end{align*}
\end{theorem}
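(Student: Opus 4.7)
The plan is to mirror the look-back option argument used immediately above: introduce the approximating family $\{\Psi^\delta\}$ on $H^1$ as a bridge between the infinite-dimensional Markov property of $(\mathcal{S}^\epsilon,Y^\epsilon)$ (which requires a functional on $H^1$) and the functional $\Psi$, which is only defined on the smaller space $\mathcal{C}[-T,0]$. First, for each fixed $\delta>0$, since $\Psi^\delta$ is bounded and measurable on $H^1$ and $(\mathcal{S}^\epsilon,Y^\epsilon)$ is Markov on $H$ by \cite{delay}, I would write
\begin{align*}
\mathbb{E}^*[\Psi^\delta(S_T^\epsilon,S_T^\epsilon(\cdot))\,|\,\mathcal{F}_\tau]=\mathbb{E}^*[\Psi^\delta(S_T^\epsilon,S_T^\epsilon(\cdot))\,|\,S_\tau^\epsilon,Y_\tau^\epsilon,S_\tau^\epsilon(\cdot)].
\end{align*}
Because $S_T^\epsilon(\cdot)\in\mathcal{C}[-T,0]$ a.s., the hypothesis $\Psi^\delta(\eta(0),\eta)\to\Psi(\eta)$ applies pointwise along the trajectories; combined with the uniform boundedness of $\{\Psi^\delta\}_{\delta>0}$, dominated convergence on both sides lets me send $\delta\to 0$ and deduce the Markov-type identity
\begin{align*}
P^\epsilon_\tau=\mathbb{E}^*\!\left[e^{-\int_\tau^T\mathcal{R}(s)ds}\Psi(S_T^\epsilon(\cdot))\,\big|\,S_\tau^\epsilon,Y_\tau^\epsilon,S_\tau^\epsilon(\cdot)\right].
\end{align*}

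Next I would pass $\epsilon\to 0$ exactly as in the look-back case. By theorem \ref{th_example}, $S^\epsilon\to\overline{S}$ weakly in $\mathcal{C}[0,T]$ under $Q^\gamma$, which, through the shift $\theta\mapsto T+\theta$ and the boundary convention $S^\epsilon_\theta=\nu_0(\theta)$ for $\theta<0$, yields weak convergence of the trajectory segment $S_T^\epsilon(\cdot)$ to $\overline{S}_T(\cdot)$ in $\mathcal{C}[-T,0]$. Since $\Psi$ is bounded and continuous on $\mathcal{C}[-T,0]$ and $e^{-\int_\tau^T\mathcal{R}(s)ds}$ is deterministic, the continuous mapping theorem transports the weak convergence to the (conditional) expectation. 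The limit is then identified, via the analogous $\delta$-approximation applied to the limit Markov process $\overline{\mathcal{S}}$ on $H^1$, with
\begin{align*}
\overline{P}_\tau=\mathbb{E}^*\!\left[e^{-\int_\tau^T\mathcal{R}(s)ds}\Psi(\overline{S}_T(\cdot))\,\big|\,\mathcal{F}_\tau\right],
\end{align*}
yielding $\lim_{\epsilon\to 0}P^\epsilon_\tau=\overline{P}_\tau$.

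The main obstacle I anticipate is the $\delta\to 0$ step, specifically justifying pointwise a.s. convergence of $\Psi^\delta(S_T^\epsilon,S_T^\epsilon(\cdot))$ to $\Psi(S_T^\epsilon(\cdot))$ using only the stated hypothesis: the hypothesis gives convergence for $\eta\in\mathcal{C}[-T,0]$, and one must invoke the a.s. continuity of the sample paths of $S^\epsilon$ (which holds by construction) to apply it along the random trajectory. A secondary care point is the stochastic short-rate case, where, as in theorem \ref{th_example}, one must augment the state with the integral process $I^\epsilon$ and use the joint weak convergence of $(S^\epsilon,I^\epsilon)$ — this extension is routine once the constant-rate case is settled.
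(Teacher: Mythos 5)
Your proposal is correct and takes essentially the same route as the paper: the theorem is stated there as the direct abstraction of the preceding look-back argument (Markov property of $(\mathcal{S}^\epsilon,Y^\epsilon)$ applied to the bounded $\Psi^\delta$ on $H^1$, dominated convergence in $\delta$ using the uniform bound to transfer the identity to $\Psi$, then the weak convergence of $S^\epsilon$ in the space of continuous functions and the analogous identification for the limit process $\overline{\mathcal{S}}$). You also correctly flag the two care points the paper itself relies on, namely the a.s.\ continuity of the sample paths to invoke the pointwise hypothesis along trajectories, and the augmentation by $I^\epsilon$ in the stochastic short-rate case.
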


\section*{Acknowledgements}
The author would like to thank his PhD's supervisors Giuseppina Guatteri and Gianmario Tessitore for their constant support. He also wants to thank Sandra Cerrai and Carlo Sgarra for some useful conversations.


\begin{thebibliography}{99}

\bibitem{bardi} 
M. Bardi, A. Cesaroni, L. Manca. 
Convergence by viscosity methods in multiscale financial models with stochastic volatility. 
SIAM Journal on Financial Mathematics 1.1 (2010): 230-265. https://doi.org/10.1137/090748147

\bibitem{bergomi} 
L. Bergomi. Stochastic volatility modeling. CRC press, 2015.

\bibitem{billingsley} 
P. Billingsley.
Convergence of Probability Measures.  
John Wiley and Sons, Inc., Second Edition, 1999.


\bibitem{Cerrai} 
S. Cerrai.
A Khasminskii type averaging principle for stochastic reaction-diffusion equations.  
Ann. Appl. Probab. 19(3) (2009) 899-948.
doi:10.1214/08-AAP560. 

\bibitem{Cerrai_lunardi} 
S. Cerrai, A. Lunardi.
Averaging principle for nonautonomous slow-fast systems of stochastic reaction- diffusion equations: the almost periodic case.  
SIAM J. Math. Anal. 49 (2017) 2843-2884.
https://doi.org/10.1137/16M1063307

\bibitem{choi_hybrid}
Choi S.Y., Fouque J.P., Kim J.H. Option pricing under hybrid stochastic and local volatility. Quantitative Finance. 2013 Aug 1;13(8):1157-65.
https://doi.org/10.1080/14697688.2013.780209

\bibitem{delay} 
Chojnowska-Michalik. Reprentation theorem for general delay equations. (1978).


\bibitem{defeo}
de Feo Filippo, The averaging principle for stochastic differential equations and a financial application.
Master's Thesis, Politecnico di Milano, 2020.
https://www.politesi.polimi.it/handle/10589/165293

\bibitem{papanicolaou2} 
J. P. Fouque, G. Papanicolaou, R. Sircar.
Derivatives in Financial Markets with Stochastic Volatility.  
Cambridge University Press, 2000.

\bibitem{papanicolaou} 
J. P. Fouque, G. Papanicolaou, R. Sircar, and K. Solna.
Singular perturbation in option pricing.  
SIAM Journal on Applied Mathematics 63.5 (2003): 1648-1665.
https://doi.org/10.1137/S0036139902401550

\bibitem{papanicolaou3} 
Fouque Jean-Pierre, George Papanicolaou, Ronnie Sircar, and Knut Sølna. Multiscale stochastic volatility for equity, interest rate, and credit derivatives. Cambridge University Press, 2011.

\bibitem{friedlin} 
M.I. Freidlin, A.D. Wentzell,
Random Perturbations of Dynamical Systems.  
Springer, New York, NY, 1998. 

\bibitem{fuhrman}
Fuhrman M., Tessitore G. Generalized directional gradients, backward stochastic differential equations and mild solutions of semilinear parabolic equations. Applied Mathematics and Optimization. 2005 May 1;51(3):279-332.
https://doi.org/10.1007/s00245-004-0814-x

\bibitem{karatzas} 
I. Karatzas, S. E. Shreve.
Brownian Motion and Stochastic Calculus.  
Springer, New York, NY, 1998. 47-127.

\bibitem{Khasminskii} 
Khasminskii R.Z.
On an averaging principle for Itô stochastic differential equations.  
Kibernetika, 4(3) (1968), 260-279. (Russian).

\bibitem{lorig_lsv}
Lorig M. Pricing derivatives on multiscale diffusions: An eigenfunction expansion approach. Mathematical Finance 24.2 (2014): 331-363.
 https://doi.org/10.1111/mafi.12007


\bibitem{lorig2} 
Lorig M., S. Pagliarani, A. Pascucci. Explicit implied volatilities for multifactor local‐stochastic volatility models. Mathematical Finance 27.3 (2017): 926-960.
 https://doi.org/10.1111/mafi.12105

\bibitem{rawlings} 
Wu Fuke, Tianhai Tian, James B. Rawlings, and George Yin. 
Approximate method for stochastic chemical kinetics with
two-time scales by chemical Langevin equations.  
The Journal of chemical physics 144.17 (2016): 174112.
https://doi.org/10.1063/1.4948407

\bibitem{Sanders} 
Jan Sanders, Ferdinand Verhulst.
Averaging Methods in Nonlinear Dynamical Systems.  
Springer, New York, NY, 1985. 

\bibitem{strook} 
Stroock D.W., Varadhan S.R.
Multidimensional Stochastic Processes.  
Springer, 2007.

\bibitem{veretnikov} 
A. Yu. Veretennikov.
On the averaging principle for systems of stochastic differential equations. 
Mathematics of the USSR-Sbornik 69.1 (1991): 271.

\bibitem{Wei_liu} 
Wei Liu, Michael Röckner, Xiaobin Sun, Yingchao Xie,
Averaging principle for slow-fast stochastic differential equations with time dependent locally Lipschitz coefficients,
Journal of Differential Equations,
Volume 268, Issue 6,
2020,
Pages 2910-2948,
ISSN 0022-0396,
https://doi.org/10.1016/j.jde.2019.09.047.

\bibitem{WEINAN} 
E. Weinan , D. Liu, E. Vanden‐Eijnden.
Analysis of multiscale methods for stochastic differential equations.  
Communications on Pure and Applied Mathematics 58.11 (2005): 1544-1585.

\bibitem{wang}
Xu Yong, Ruifang Wang. Averaging Principles for Nonautonomous Two-Time-Scale Stochastic Reaction-Diffusion Equations with Jump. Complexity 2020 (2020).

\end{thebibliography}
\end{document}